\definecolor{DehColor}{rgb}{1,0,0}
\definecolor{FroColor}{rgb}{0,0,1}
\long\def\DDD[#1]{\color{red}{#1}}
\font\dixmath=cmsy10
\theoremstyle{plain}
\newtheorem{prop}{Proposition}[section]
\newtheorem{coro}[prop]{Corollary}
\newtheorem*{coro*}{Corollary}
\newtheorem{lemm}[prop]{Lemma} 
\newtheorem{thrm}[prop]{Theorem}
\newtheorem*{thrm1}{Theorem 1}
\theoremstyle{definition}
\newtheorem{defi}[prop]{Definition}
\newtheorem{exam}[prop]{Example}
\newtheorem{rema}[prop]{Remark}
\newtheorem*{propA}{Property A}
\newtheorem*{propC}{Property C}
\numberwithin{equation}{section}
\newenvironment{deflist}{\begin{list}{--}{\setlength{\leftmargin}{15pt}\setlength{\labelwidth}{15pt}}}{\end{list}}
\newenvironment{conditions}{\setlength{\jot}{-1pt}\vspace{-5pt}\subequations\flalign}{\endflalign\endsubequations}
\newcommand{\arXiv}{arXiv:math.GR/0811.3902}
\renewcommand{\aa}[2]{a_{#1, #2}}
\newcommand{\BB}[1]{B_{#1}}
\newcommand{\bidual}{^{\hspace{-0.05em}\raise0.6pt\hbox{$\scriptscriptstyle+$}\hspace{-0.05em}*}}
\newcommand{\BKL}[1]{B_{#1}\bidual}
\newcommand{\BP}[1]{B_{#1}\plusexp}
\newcommand{\br}{\beta}
\newcommand{\brbr}{\gamma}
\newcommand{\brbrr}{\gamma'}
\newcommand{\brbrneg}{\brbr^{\scriptscriptstyle-}}
\newcommand{\brbrpos}{\brbr^{\scriptscriptstyle+}}
\newcommand{\brr}{\beta'}
\newcommand{\brrr}{\beta''}
\newcommand{\brneg}{\br^{\scriptscriptstyle-}}
\newcommand{\brpos}{\br^{\scriptscriptstyle+}}
\newcommand{\brrneg}{\br'^{\scriptscriptstyle-}}
\newcommand{\brrpos}{\br'^{\scriptscriptstyle+}}
\newcommand{\cond}[1]{&\quad\text{-- #1}&}
\newcommand{\ddd}[1]{\delta_{#1}}
\newcommand{\dddd}[2]{\delta_{#1,#2}}
\newcommand{\equal}\equiv
\newcommand{\ff}[1]{\phi_{\hspace{-0.5pt}\raise-1pt\hbox{$\scriptstyle #1$}}}
\newcommand{\find}[1]{e(#1)}
\newcommand{\findi}[1]{f(#1)}
\renewcommand{\ge}{\geqslant}
\def\homs(#1){\underline{#1}}
\newcommand{\ie}{\emph{i.e.}}
\newcommand{\inv}{^{\minus\hspace{-0.1em}1}}
\def\Item(#1){$\quad(#1)$}
\newcommand{\last}[1]{{#1}^{\scriptscriptstyle\mathtt{\#}}}
\newcommand{\Ldots}{...\,} 
\renewcommand{\le}{\leqslant}
\newcommand{\minus}{\mathchoice{-}{-}{\raise0.7pt\hbox{$\scriptscriptstyle-$}\scriptstyle}{-}}
\newcommand{\newintegeri}[2]{
	\expandafter\def\csname #1\endcsname{#2}
	\expandafter\def\csname #1o\endcsname{{#2{-}1}}
	\expandafter\def\csname #1t\endcsname{{#2{-}2}}
	\expandafter\def\csname #1p\endcsname{{#2{+}1}}
	\expandafter\def\csname #1pp\endcsname{{#2{+}2}}}
\newcommand{\newintegerii}[1]{\newintegeri{#1#1}{#1}}
\newcommand{\plus}{\mathchoice{+}{+}{\raise0.7pt\hbox{$\scriptscriptstyle+$}\scriptstyle}{+}}
\newcommand{\plusminus}{\mathchoice{\pm}{\pm}{\raise0.7pt\hbox{$\scriptscriptstyle\pm$}\scriptstyle}{\pm}}
\newcommand{\plusexp}{^{\raise0.8pt\hbox{$\hspace{-0.05em}\scriptscriptstyle+$}}}
\newcommand{\sep}[2]{\widehat\delta_{#1,#2}}
\newcommand{\ShortLex}{\mathtt{ShortLex}}
\newcommand{\sig}[1]{\sigma_{\!#1}} 
\newcommand{\siginv}[1]{\sigma_{\!#1}^{\hspace{-0.05em}\raise0.8pt\hbox{$\scriptscriptstyle-$}\hspace{-0.1em}1}}
\newcommand{\sigpm}[1]{\sigma_{\!#1}^{\hspace{-0.05em}\raise0.8pt\hbox{$\scriptscriptstyle\pm$}\hspace{-0.1em}1}}
\newcommand{\sigg}{\sigma}
\newcommand{\SL}[1]{<_{#1}^*}
\newcommand{\SLL}{<^*}
\newcommand{\SLLe}{\le^*}
\newcommand{\ww}{w}
\newcommand{\www}{\ww'}
\newcommand{\xx}{x}
\begin{document} 

%Heading
\title{The well-ordering of dual braid monoids}
\author{Jean Fromentin} 
\address{Laboratoire de Math\'ematiques Nicolas Oresme,
  UMR 6139 CNRS, Universit\'e de Caen BP 5186, 14032 Caen, France}
\email{jean.fromentin@math.unicaen.fr}
\maketitle

%Abstract
\begin{abstract}
We describe the restriction of the Dehornoy ordering of braids to the 
dual braid monoids introduced by Birman, Ko and Lee:
we give an inductive characterization of the
ordering of the dual braid monoids and compute the
corresponding ordinal type. The proof consists in
introducing a new ordering on the dual braid monoid
using the rotating normal form of
\arXiv, and then proving that this new ordering coincides
with the standard ordering of braids.
\end{abstract}

%%%%%%%%%%%%%%%%%%%%%%%%
%
% Introduction
%
%%%%%%%%%%%%%%%%%%%%%%%%

It is known since~\cite{Dehornoy:LD} and~\cite{Laver} that the braid group~$\BB\nn$ is left-orderable, by an ordering whose restriction to the positive braid monoid is a well-order.
Initially introduced by complicated methods of self-distributive algebra, the standard braid ordering has then received a lot of alternative constructions 
originating from different approaches---see
\cite{DehornoyDynnikovRolfsenWiest}. However, this ordering remains a
complicated object, and many questions involving it remain open.

Dual braid monoids have been introduced by Birman, Ko, and Lee in~\cite{Birman}.
The dual braid monoid~$\BKL\nn$ is a certain submonoid of the $\nn$-strand braid group~$\BB\nn$.
It is known that the monoid~$\BKL\nn$ admits a Garside structure, where simple elements correspond to non-crossing partitions of~$\nn$---see~\cite{Bessis}.
In particular, there exists a standard normal form associated with this Garside structure, namely the so-called greedy normal form.

The rotating normal form is another normal form
on~$\BKL\nn$ that was introduced in~\cite{F:SR}. It
relies on the existence of a natural embedding
of~$\BKL\nno$ in~$\BKL\nn$ and on the easy
observation that each element of~$\BKL\nn$ admits a
maximal right divisor that belongs to~$\BKL\nno$. The
main ingredient in the construction of the rotating normal
form is the result that each braid~$\br$ in~$\BKL\nno$
admits a unique decomposition
$$\br= \ff\nn^\brdio(\br_\brdi) \cdot ...  \cdot
\ff\nn^2(\br_3) \cdot \ff\nn(\br_2) \cdot \br_1$$  with
$\br_\brdi, \Ldots, \br_1$ in~$\BKL\nno$ such that $\br_\brdi\not=1$ and such that for each $\kk\ge1$, the braid~$\br_\kk$ is the maximal right-divisor of $\ff\nn^{\brdi\minus\kk}(\br_\brdi)\cdot...\cdot\br_\kk$ that lies in $\BKL\nno$.
The sequence $(\br_\brdi, ..., \br_1)$ is then called the
\emph{$\ff\nn$-splitting} of~$\br$. 

The main goal of this paper is to establish the following
simple  connection between the order on~$\BKL\nn$ and the
order on~$\BKL\nno$ through the notion
of~$\ff\nn$-splitting.

\begin{thrm1}
For all braids~$\br, \brbr$ in~$\BKL\nn$ with $\nn\ge2$, the relation $\br
< \brbr$ is true if and only if the $\ff\nn$-splitting
$(\br_\brdi, \Ldots, \br_1)$ of~$\br$ is smaller than the
$\ff\nn$-splitting $(\brbr_{\brdii}, \Ldots, \brbr_1)$ of~$\brbr$ 
with respect to the $\ShortLex$-extension of the ordering
of~$\BKL\nno$, \ie, we have either $\brdi < \brdii$, or $\brdi
= \brdii$ and there exists $\tt$ such that $\br_\tt < \brbr_\tt$ holds and $\br_\kk = \brbr_\kk$ holds for $\brdi \ge\kk > \tt$.
\end{thrm1}

A direct application of Theorem~1 is:

\begin{coro*}
For $\nn\ge2$, the restriction of the braid ordering to~$\BKL\nn$ is
a well-ordering of ordinal type~$\omega^{\omega^\nnt}$.
\end{coro*}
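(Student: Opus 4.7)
My plan is to argue by induction on $\nn \geq 2$, using Theorem~1 to reduce the question for $\BKL\nn$ to the corresponding question for $\BKL\nno$, and then to dispatch the remainder with a short ordinal computation.

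For the base case $\nn = 2$, I will simply observe that $\BKL2$ is freely generated by the unique dual generator $a_{1,2}$, so it is order-isomorphic to $(\mathbb{N},<)$ and thus well-ordered of ordinal type $\omega = \omega^{\omega^0}$, which matches the claimed formula.

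For the inductive step, assume that $\BKL\nno$ is well-ordered of type $\alpha = \omega^{\omega^{\nn-3}}$. By Theorem~1, the $\ff\nn$-splitting gives an order-isomorphism between $\BKL\nn$ and the set of admissible sequences $(\br_\brdi, \Ldots, \br_1)$ (either empty, or with $\brdi \geq 1$ and $\br_\brdi \neq 1$), equipped with the $\ShortLex$-extension of the ordering on $\BKL\nno$. Since the $\ShortLex$-extension of any well-ordering is again a well-ordering, $\BKL\nn$ is well-ordered. To identify its ordinal type, I note that $\alpha$ is a limit ordinal, so the set $\BKL\nno \setminus \{1\}$ has order type $\alpha$; then, for each length $\brdi \geq 1$, the admissible length-$\brdi$ tuples ordered lexicographically with $\br_\brdi$ most significant contribute an ordinal $\alpha^{\brdio}\cdot\alpha = \alpha^\brdi$. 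Summing over all lengths should give
\[
1 + \sum_{\brdi \geq 1} \alpha^\brdi \;=\; \alpha^\omega \;=\; \omega^{\omega^{\nn-3}\cdot\omega} \;=\; \omega^{\omega^{\nnt}},
\]
using the power law $(\omega^\beta)^\omega = \omega^{\beta\cdot\omega}$ and the fact that each $\alpha^\brdi$ is additively indecomposable.

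The substantive content of the corollary is carried entirely by Theorem~1; the remaining work is routine ordinal bookkeeping. The points that will need checking are: the direction of the $\ShortLex$ convention (namely, that $\br_\brdi$ is the \emph{most} significant coordinate, so that the length-$\brdi$ layer contributes $\alpha^\brdi$ rather than some other product), the fact that $1$ is indeed the minimum of $\BKL\nno$ so that omitting it does not change the order type at the limit ordinal $\alpha$, and the ordinal absorptions that make the geometric sum collapse to $\alpha^\omega$. I expect no real obstacle beyond these bookkeeping matters.
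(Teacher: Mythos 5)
There is a genuine gap in the inductive step, precisely at the point the paper itself flags as delicate. You assert that ``for each length~$\brdi\ge1$, the admissible length-$\brdi$ tuples \Ldots contribute an ordinal~$\alpha^\brdi$.'' But the admissible sequences are not all sequences $(\br_\brdi,\Ldots,\br_1)$ in $\BKL\nno$ with $\br_\brdi\ne1$: by Proposition~\ref{P:Splitting}, a sequence is an $\ff\nn$-splitting only if it also satisfies the non-trivial side condition~\eqref{E:SplittingCondition} (the $\BKL\nno$-tail of each partial left product $\ff\nn^{\brdi-\kk}(\br_\brdi)\cdots\ff\nn(\br_\kkp)$ is trivial). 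So the set of admissible length-$\brdi$ tuples is a \emph{proper} subset of the full product, and a subset of a well-order of type~$\alpha^\brdi$ has type \emph{at most}~$\alpha^\brdi$, not exactly~$\alpha^\brdi$. Your computation therefore establishes only the upper bound $\le\omega^{\omega^\nnt}$, not the claimed equality. This is not bookkeeping; it is the substantive point.

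The paper's own argument handles exactly this: it derives the same upper bound $\le\omega^{\omega^\nnt}$ via the $\ShortLex$-on-sequences estimate, and then obtains the matching lower bound by observing that $\BP\nn\subseteq\BKL\nn$ and that the order type of $(\BP\nn,<)$ is already $\omega^{\omega^\nnt}$ by Burckel's theorem (\cite{Burckel:WO}, alternatively \cite{Carlucci}). To repair your proof you would need either to invoke a lower-bound input of that kind, or to construct directly an increasing sequence of type $\omega^{\omega^\nnt}$ consisting of braids whose $\ff\nn$-splittings you can exhibit explicitly (the paper notes this alternative in passing). Without one of these, the induction only yields an upper bound at each stage.

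A small secondary remark: your step ``the set $\BKL\nno\setminus\{1\}$ has order type~$\alpha$'' is fine (removing the least element of an infinite well-order does not change its type), and the ordinal identities $\alpha^\omega=\omega^{\omega^{\nn-3}\cdot\omega}=\omega^{\omega^\nnt}$ and the collapse of the geometric sum are all correct; the only real issue is the unjustified exactness of the layer count.
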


This refines a former result by Laver stating that the
restriction of the braid ordering to~$\BKL\nn$ is a
well-ordering without determining its exact type. 

Another application of Theorem~1 or, more exactly, of
its proof, is a new proof of the existence of the braid
ordering. What we precisely obtain is a new
proof of the result that every nontrivial braid can be
represented by a so-called
$\sigg$-positive or $\sigg$-negative word
(``Property~$\mathbf{C}$'').

The connection between the restrictions of the braid
order to~$\BKL\nn$ and~$\BKL\nno$ via the
$\ff\nn$-splitting is formally similar to the connection
between the restrictions of the braid order to
the Garside monoids~$\BP\nn$ and~$\BP\nno$ via the
so-called $\Phi_\nn$-splitting established
in~\cite{Dehornoy:AF} as an application of Burckel's
approach of~\cite{Burckel:OT,Burckel:WO,Burckel:CO}.
However, there is an important difference, namely that,
contrary to Burckel's approach, our construction requires
no transfinite induction: although intricate in the general
case of $5$~strands and above, our proof remains
elementary. This is an essential advantage of using
the Birman--Ko--Lee generators rather than the Artin
generators. 

The paper is organized as follows.
In Section~\ref{S:Introduction}, we briefly recall the definition of the Dehornoy 
ordering of braids and the definition of the dual braid
monoid. In Section~\ref{S:RotatingOrdering}, we use the $\ff\nn$-splitting to construct a new linear
ordering of~$\BKL\nn$, called the rotating
ordering.
In Section~\ref{S:KeyLemma}, we deduce from the results about $\ff\nn$-splittings established in~\cite{F:SR} the result that certain specific braids are $\sigg$-positive or trivial. Finally, Theorem~1 is proved in Section~\ref{S:ProofOfTheMainResult}.

\section{The general framework}
\label{S:Introduction}

Artin's braid group~$\BB\nn$ is defined for
$\nn\ge2$ by the presentation
\begin{equation}
\label{E:BnPresentation}
\left<\sig1,\Ldots,\sig\nno ; \begin{array}{cl}
\sig\ii\sig\jj\,=\,\sig\jj\sig\ii & \text{for
$|\ii\minus\jj|\ge2$}\\
\sig\ii\sig\jj\sig\ii\,=\,\sig\jj\sig\ii\sig\jj & \text{for
$|\ii\minus\jj|=1$}  \end{array}\right>.
\end{equation}

The submonoid of $\BB\nn$ generated by $\{\sig1,\Ldots,\sig\nno\}$ is denoted by $\BP\nn$.

\subsection{The standard braid ordering}
\label{SS:BraidOrdering}

We recall the construction of the Dehornoy ordering of
braids. By a \emph{braid word} we mean any word on the
letters~$\sig\ii^{\pm1}$.

\begin{defi}
 \label{D:Order}
~
\begin{deflist}
\item A braid word~$\ww$ is called \emph{$\sig\ii$-positive} (\emph{resp. $\sig\ii$-negative}) 
if $\ww$ contains at least one~$\sig\ii$ (\emph{resp.} at
least one~$\siginv\ii$), no~$\siginv\ii$ (\emph{resp.} no
$\sig\ii$), and no letter~$\sigpm\jj$ with $\jj>\ii$.
\item A braid $\br$ is said to be \emph{$\sig\ii$-positive} (\emph{resp. $\sig\ii$-negative}) if, among the braid words
  representing~$\br$, at least one is $\sig\ii$-positive (\emph{resp.} $\sig\ii$-negative).
\item A braid $\br$ is said to be \emph{$\sigg$-positive} (\emph{resp. $\sigg$-negative}) if it is $\sig\ii$-positive (resp. $\sig\ii$-negative) for some $\ii$.
\item For $\br$, $\brbr$ braids, we declare that $\br<\brbr$ is true if the braid $\br\inv\,\brbr$ is $\sigg$-positive.
\end{deflist}
\end{defi}

By definition of the relation $<$, every $\sigg$-positive braid $\br$ satisfy  $1<\br$.
Then, every braid of $\BP\nn$ except $1$ is lager than $1$.

\begin{exam}
Put $\br=\sig2$ and $\brbr=\sig1\,\sig2$.
Let us show that $\br$ is $<$-smaller than~$\brbr$.
The quotient $\br\inv\,\brbr$ is represented by the word $\siginv2\,\sig1\,\sig2$.
Unfortunately, the latter word is neither $\sig2$-positive (since it contains $\siginv2$), nor $\sig2$-negative (since it contains~$\sig2$), nor $\sig1$-positive and $\sig1$-negative (since it contains a letter $\sigpm2$).
However, the word $\siginv2\,\sig1\,\sig2$ is equivalent to $\sig1\,\sig2\,\siginv1$, which is $\sig2$-positive.
Then, the braid~$\br\inv\,\brbr$ is $\sig2$-positive and the relation $\br<\brbr$ holds.
\end{exam}

\begin{thrm} \cite{Dehornoy:LD}
  \label{P:Order}
  For each~$\nn\ge2$, the relation~$<$ is a linear ordering on~$\BB\nn$
  that is invariant under left multiplication.
\end{thrm}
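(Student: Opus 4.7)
The plan is to reduce the statement to the two classical facts that constitute the genuine content of Dehornoy's theorem, namely:
\begin{itemize}
\item[(A)] No $\sigg$-positive word represents the trivial element of~$\BB\nn$.
\item[(C)] Every nontrivial element of~$\BB\nn$ admits either a $\sigg$-positive or a $\sigg$-negative representative.
\end{itemize}
Property~(A) rules out $\br = \brbr$ in the presence of a $\sigg$-positive representative of~$\br\inv\brbr$, while Property~(C) guarantees that any two distinct braids can be compared. I would take both as proven (this is the content of~\cite{Dehornoy:LD}, and in the present paper a new proof of~(C) will be obtained from the rotating-ordering machinery developed in the later sections).

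Granting (A) and (C), the four properties to verify (left-invariance, irreflexivity, trichotomy, transitivity) all reduce to elementary algebra. Left-invariance is immediate from the identity $(\alpha\br)\inv(\alpha\brbr) = \br\inv\brbr$, so $\br<\brbr$ and $\alpha\br<\alpha\brbr$ mean the same thing. For trichotomy, given~$\br,\brbr$, set $\ddd{} = \br\inv\brbr$; if $\ddd{}=1$ then $\br=\brbr$, otherwise (C) says $\ddd{}$ is $\sigg$-positive or $\sigg$-negative, and (A) applied to~$\ddd{}$ or~$\ddd{}\inv$ ensures that these two cases are mutually exclusive and both exclude $\ddd{}=1$. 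Irreflexivity is a special case, since $\br<\br$ would force $1$ to be $\sigg$-positive.

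The only slightly delicate point is transitivity, which rests on the observation that the set of $\sigg$-positive braids is closed under multiplication. Specifically, if $\ww_1$ is a $\sig\ii$-positive word and $\ww_2$ is a $\sig\jj$-positive word, then the concatenation $\ww_1\ww_2$ contains no letter $\sigpm\ll$ with $\ll > \max(\ii,\jj)$, contains at least one~$\sig{\max(\ii,\jj)}$, and contains no~$\siginv{\max(\ii,\jj)}$; thus $\ww_1\ww_2$ is $\sig{\max(\ii,\jj)}$-positive. Consequently, if $\br<\brbr$ and $\brbr<\ddd{}$ then $\br\inv\ddd{} = (\br\inv\brbr)(\brbr\inv\ddd{})$ is $\sigg$-positive, whence $\br<\ddd{}$. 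Combined with trichotomy and irreflexivity this gives linearity.

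The main obstacle, naturally, lies in Properties~(A) and~(C), not in the structural argument above. Property~(A) is the subtle one and cannot be proved by purely combinatorial word-rewriting: in Dehornoy's original approach it requires the action of~$\BB\nn$ on a free self-distributive algebra, although later elementary proofs via the action on the free group~$F_\nn$ (Larue) or via handle reduction are available. Property~(C) is proved by induction on a well-chosen complexity measure of braid words. As the later sections of this paper will show, both properties can in fact be recovered as byproducts of the $\ff\nn$-splitting analysis of the dual braid monoid, but for the present statement it suffices to invoke them as established in~\cite{Dehornoy:LD}.
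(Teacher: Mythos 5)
Your proposal matches the paper's own treatment: the paper does not prove Theorem~\ref{P:Order} but cites~\cite{Dehornoy:LD} and explicitly notes that it ``relies on two results,'' namely Property~A and Property~C, and your derivation of left-invariance, irreflexivity, trichotomy, and transitivity from those two properties (using closure of $\sigg$-positive words under concatenation for transitivity) is exactly the standard reduction the paper has in mind. The only cosmetic caveat is that mutual exclusion of the $\sigg$-positive and $\sigg$-negative cases also quietly uses the concatenation observation, not just Property~A applied to $\ddd{}$ and $\ddd{}\inv$ separately, but that is the same elementary argument you already spelled out for transitivity.
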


\begin{rema}
In this paper, we use the flipped version of
  the braid ordering~\cite{DehornoyDynnikovRolfsenWiest}, in which one takes into
account the   generator~$\sig\ii$ with greatest index, and not the original version,
  in which one considers the generator with lowest index. This choice
  is necessary here, as we need that $\BKL\nno$ is an
initial   segment of~$\BKL\nn$. That would not be true if we were
considering the lower version of the ordering. We recall that the flip automorphism~$\Phi_\nn$
that maps~$\sig\ii$ and~$\sig{\nn-\ii}$ for each~$\ii$
exchanges the two version, and, therefore, the  properties of both orderings are identical.
\end{rema}

Following~\cite{Dehornoy:LD}, we recall that
Theorem~\ref{P:Order} relies on two results:

\begin{propA}
Every $\sig\ii$-positive braid is nontrivial.
\end{propA}

\begin{propC}
 Every braid is either trivial or $\sigg$-positive or $\sigg$-negative.
\end{propC}

In the sequel, as we shall prove that Property~$\mathbf{C}$ is a consequence of Theorem~$1$, we never use Theorem~\ref{P:Order}, \ie,
we never use the fact that the relation~$<$ of Definition~\ref{D:Order} is a linear ordering.
The only properties of~$<$ we shall use are the following trivial facts---plus, but
exclusively for the corollaries, Property~$\mathbf{A}$.

\begin{lemm}
\label{L:Transitive}
 The relation $<$ is transitive, and invariant under left-multiplication.
\end{lemm}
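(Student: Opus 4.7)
The plan is to deduce both properties directly from the definition of $<$ as $\br < \brbr \iff \br\inv\brbr$ is $\sigg$-positive, with no appeal to Property~$\mathbf{C}$ or to linearity.

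For invariance under left multiplication, I would simply observe that for any braid~$\gamma$,
\[
(\gamma\br)\inv(\gamma\brbr) = \br\inv\gamma\inv\gamma\brbr = \br\inv\brbr,
\]
so if $\br\inv\brbr$ is $\sigg$-positive then so is $(\gamma\br)\inv(\gamma\brbr)$, which gives $\gamma\br < \gamma\brbr$.

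For transitivity, assume $\br < \brbr$ and $\brbr < \brbrr$; I want to show $\br\inv\brbrr$ is $\sigg$-positive. The key step is the elementary claim that the product of a $\sigg$-positive braid by a $\sigg$-positive braid is $\sigg$-positive. To see this, pick a $\sig\ii$-positive word~$\ww_1$ representing $\br\inv\brbr$ and a $\sig\jj$-positive word~$\ww_2$ representing $\brbr\inv\brbrr$, set $\kk = \max(\ii,\jj)$, and consider the concatenation $\ww_1 \ww_2$, which represents $\br\inv\brbrr = (\br\inv\brbr)(\brbr\inv\brbrr)$. By definition of $\sig\ii$- and $\sig\jj$-positivity, $\ww_1\ww_2$ contains no letter $\sigpm\ll$ with $\ll > \kk$; among the letters $\sigpm\kk$ only $\sig\kk$ occurs (since neither $\ww_1$ nor $\ww_2$ contains $\siginv\kk$); and at least one such $\sig\kk$ appears (coming from whichever of $\ww_1$ or $\ww_2$ realizes the maximum). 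Hence $\ww_1\ww_2$ is $\sig\kk$-positive, so $\br\inv\brbrr$ is $\sigg$-positive and $\br<\brbrr$.

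The argument is thus a purely syntactic word-level manipulation; there is no genuine obstacle, and in particular it avoids any use of Property~$\mathbf{A}$ or of the fact that $<$ is a linear ordering. This matches the author's note that only these trivial facts about~$<$ are needed before Theorem~1 is proved.
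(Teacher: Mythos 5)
Your proof is correct, and it is precisely the elementary word-level argument the paper has in mind when it labels this lemma a ``trivial fact'' and omits the proof: left-invariance is the cancellation identity $(\gamma\br)\inv(\gamma\brbr)=\br\inv\brbr$, and transitivity follows from the observation that concatenating a $\sig\ii$-positive word with a $\sig\jj$-positive word yields a $\sig{\max(\ii,\jj)}$-positive word. Your explicit check that the concatenation contains a $\sig\kk$, no $\siginv\kk$, and no $\sigpm\ll$ with $\ll>\kk$ (where $\kk=\max(\ii,\jj)$) is exactly right and, as you note, uses neither Property~$\mathbf{A}$ nor linearity of $<$.
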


The ordering $<$ on $\BB\nn$ admits lots of properties but it is not a well-ordering.
For instance, 
$\sigma_\nno\inv,\sigma_\nno^{-2},\Ldots,\sigma_\nno^{-\kk},\Ldots$ is an infinite
descending sequence. However, R.\,Laver
proved the following result:

\begin{thrm}\cite{Laver}
\label{P:WellOrder}
Assume that  $M$ is a submonoid of $\BB\infty$ generated 
by a finite number of braids, each of which is a
conjugate of some braid~$\sig\ii$. Then the restriction
of~$<$ to~$M$ is a well-ordering.
\end{thrm}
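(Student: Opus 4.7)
My plan is to argue by contradiction: suppose that there exists an infinite strictly descending sequence $\br_1 > \br_2 > \cdots$ inside~$M$ and derive a contradiction, which establishes that every non-empty subset of~$M$ has a $<$-minimum.

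The first task is to establish the positivity of~$M$: every non-trivial $\br \in M$ satisfies $\br > 1$. This reduces to showing the lemma that every conjugate $g \sig\ii g\inv$ in~$\BB\infty$ is $\sigg$-positive, which I would attempt by induction on the length of a braid word representing~$g$, using the braid relations to propagate $\sigg$-positivity outward, or alternatively by a handle-reduction argument. Granted this lemma, each generator of~$M$ is $\sigg$-positive, hence~$>1$; and by left-invariance of~$<$ (Lemma~\ref{L:Transitive}) every non-trivial product of generators is also~$>1$, so the whole sequence stays strictly above~$1$.

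I would then reduce to $M \subseteq \BB\nn$ for some finite~$\nn \ge 2$ (by finite generation of~$M$) and induct on~$\nn$. The base case~$\nn = 2$ is immediate: $\BB2$ is infinite cyclic and every conjugate of~$\sig1$ in~$\BB\infty$ coincides with~$\sig1$ once read inside~$\BB2$, so~$M$ is order-isomorphic to a submonoid of~$\NN$. For the induction step, I would pursue an analogue of the $\ff\nn$-splitting used later in the paper: decompose each $\br \in M$ as a product of a ``high'' factor capturing the essential contribution of generators of the form $g \sig\nno g\inv$ and a ``low'' factor lying, after suitable conjugation, inside~$\BB\nno$, to which the inductive hypothesis applies. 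A pigeonhole argument on the high factor in an infinite descending sequence should force it to stabilize, producing an infinite descending sequence in the low part and contradicting the inductive hypothesis.

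The hardest step will be making this high/low decomposition rigorous: the generators $\alpha_\jj = g_\jj \sig{\ii_\jj} g_\jj\inv$ with $\ii_\jj < \nno$ are conjugates in~$\BB\infty$ rather than in~$\BB\nno$, so their products typically involve letters~$\sigpm\nno$ and need not lie inside~$\BB\nno$. The ``maximal right divisor in~$M \cap \BB\nno$'' that drives the $\ff\nn$-splitting for dual braid monoids has no direct analogue in this abstract setting. Overcoming this seems to require either a simultaneous conjugation by some $h \in \BB\infty$ normalizing all generators of low type into~$\BB\nno$, or else the construction of an auxiliary rank function on~$M$ strictly respecting~$<$; this combinatorial bookkeeping is, I expect, the crux of the proof.
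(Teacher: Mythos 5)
The paper does not prove this theorem, which it cites from Laver and explicitly describes as ``a non-effective result based on the so-called Higman's Lemma''; so there is no in-paper proof to compare against, but your proposal is not the known argument and contains a genuine gap that you flag at the end and, I believe, underestimate. Your opening lemma---that every conjugate $g\,\sig\ii\,g\inv$ is $\sigg$-positive---is the right starting point and is not a triviality, but its crucial use is to give the \emph{subword property}, not merely positivity of~$M$. Writing $[\ww]$ for the braid represented by a word~$\ww$ over the finite generating alphabet~$A$ of~$M$: for $a\in A$, the braid $[\uu\,a\,\vv]\inv\,[\uu\vv]=[\vv]\inv\,a\inv\,[\vv]$ is the inverse of a conjugate of some~$\sig\ii$, hence $\le 1$, so inserting a generator anywhere cannot decrease the represented braid, and $\ww$ a scattered subword of~$\www$ forces $[\ww]\le[\www]$. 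Higman's Lemma---the scattered-subword relation on the free monoid~$A^*$ is a well-quasi-order---then yields, in any infinite sequence $\ww_1,\ww_2,\Ldots$ over~$A$, indices $\ii<\jj$ with $[\ww_\ii]\le[\ww_\jj]$, so there is no infinite strictly $<$-descending sequence in~$M$; linearity of~$<$ turns this into well-foundedness. No induction on~$\nn$ appears, and this is exactly why the result is non-effective.

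Your proposed induction on~$\nn$ via a high/low decomposition cannot be carried out, and what you describe as ``combinatorial bookkeeping'' is in fact the obstruction, not a detail. The~$\ff\nn$-splitting works for~$\BKL\nn$ because it is a Garside monoid with~$\BKL\nno$ as a parabolic submonoid, so maximal right divisors in~$\BKL\nno$ exist and are canonical. For an arbitrary finitely generated~$M$ of the kind in the statement there is no Garside structure, and $M\cap\BB\nno$ can be trivial: $M=\langle\,\sig2\,\sig1\,\siginv2\,\rangle\subset\BB3$ already leaves nothing to serve as a low factor. No single conjugation normalises several generators $g_\jj\,\sig{\ii_\jj}\,g_\jj\inv$ into~$\BB\nno$ simultaneously, and no order-respecting rank function substituting for the splitting is in sight. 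Without a decomposition the pigeonhole step has nothing to act on. The inductive scheme should be replaced by the subword property together with Higman's Lemma.
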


The positive braid monoid~$\BP\nn$ satisfies the
hypotheses of Theorem~\ref{P:WellOrder}. Therefore, the
restriction of the braid ordering to~$\BP\nn$ is a
well-ordering. However, Laver's proof of Theorem~\ref{P:WellOrder} leaves the
determination of the isomorphism type of $(M,
<)$ open. In the case of the monoid~$\BP\nn$, the
question was solved by S.\,Burckel:

\begin{prop}\cite{Burckel:WO}
For each $\nn\ge2$, the order type of $(\BP\nn,<)$ is
the ordinal~$\omega^{\omega^\nnt}$.
\end{prop}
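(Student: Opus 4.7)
The plan is to prove the claim by induction on $\nn$, using a counterpart for $\BP\nn$ of the splitting technique that the paper develops for the dual braid monoids. For the base case $\nn=2$, the monoid $\BP2$ is freely generated by $\sig1$, hence order-isomorphic to $(\mathbb{N},<)$, whose order type is $\omega=\omega^{\omega^0}$, matching $\omega^{\omega^{\nnt}}$ for $\nn=2$.

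For the inductive step, I would appeal to the analogous splitting result for $\BP\nn$ established in \cite{Dehornoy:AF}: every braid $\br\in\BP\nn$ admits a unique $\Phi_\nn$-splitting
$$\br=\Phi_\nn^{\brdio}(\br_\brdi)\cdot\Phi_\nn^{\brdi\minus2}(\br_{\brdio})\cdots\Phi_\nn(\br_2)\cdot\br_1,$$
with each $\br_\kk\in\BP\nno$ satisfying a maximality condition, and such that the braid ordering $<$ on $\BP\nn$ is mapped to the $\ShortLex$-extension of the ordering on $\BP\nno$ applied to the sequence $(\br_\brdi,\Ldots,\br_1)$. (This is the positive-monoid precursor of Theorem~1 in the present paper.) Granting this, $(\BP\nn,<)$ is order-isomorphic to the set of $\Phi_\nn$-admissible finite sequences over $\BP\nno$ under $\ShortLex$, and I would check next that every finite sequence over $\BP\nno\setminus\{1\}$ terminating in an arbitrary element of $\BP\nno$ arises as such a splitting (so that, after discarding trivial leading entries, one gets the full $\ShortLex$ well-ordered set of finite sequences).

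The final step is an ordinal computation. Write $\alpha=\omega^{\omega^{\nn-3}}$ for the order type of $(\BP\nno,<)$ given by the induction hypothesis. Since $\alpha$ is additively and multiplicatively indecomposable, the sequences of length at most $\kk$ under $\ShortLex$ form a well-ordered set of type $\alpha^\kk$, and the union over all $\kk$ has type
$$\sup_{\kk<\omega}\alpha^\kk\;=\;\alpha^\omega\;=\;\bigl(\omega^{\omega^{\nn-3}}\bigr)^{\!\omega}\;=\;\omega^{\omega^{\nn-3}\cdot\omega}\;=\;\omega^{\omega^{\nnt}},$$
which is the desired ordinal.

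The main obstacle is the middle step: proving that the braid ordering on $\BP\nn$ genuinely corresponds to the $\ShortLex$-extension of the ordering on $\BP\nno$ via $\Phi_\nn$-splittings. This is the analogue, for Artin generators, of the result the paper will establish for Birman–Ko–Lee generators, and classically it requires Burckel's transfinite induction machinery from \cite{Burckel:OT,Burckel:WO,Burckel:CO}; the base case and the ordinal arithmetic at the end are, by contrast, routine once the structural correspondence is in hand.
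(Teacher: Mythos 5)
This Proposition is a stated-and-cited background result in the paper (attributed to Burckel \cite{Burckel:WO}); the paper offers no proof of its own, so there is nothing to compare against on that side. On the merits of your proposal: the argument is circular. You invoke the $\Phi_\nn$-splitting correspondence between $(\BP\nn,<)$ and the $\ShortLex$-extension of $(\BP\nno,<)$ as "established in \cite{Dehornoy:AF}," but as the present paper itself points out, that result of \cite{Dehornoy:AF} is obtained \emph{as an application of} Burckel's work \cite{Burckel:OT,Burckel:WO,Burckel:CO}, which is precisely what you are trying to prove. You half-acknowledge this in your last paragraph, but with that acknowledgement your outline reduces to "assume Burckel's machinery and the hard lemma it yields, then do ordinal arithmetic" --- which is not a proof of Burckel's theorem.

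Two further points. First, the claim that "every finite sequence over $\BP\nno\setminus\{1\}$ terminating in an arbitrary element of $\BP\nno$ arises as a $\Phi_\nn$-splitting" is not something you can take for granted, and in the parallel dual-monoid setting it is in fact false: Lemma~\ref{L:LastLetter} of the paper imposes nontrivial syntactic constraints on which sequences in $\BKL\nno$ can occur as $\ff\nn$-splittings (the last letter of each non-terminal entry must involve the top strand, etc.), and the Artin analogue in Burckel's work carries analogous constraints. Because not every sequence realizes, the $\ShortLex$ computation only gives an \emph{upper} bound of $\alpha^\omega$; a matching lower bound requires a separate construction of a type-$\omega^{\omega^\nnt}$ increasing chain inside $\BP\nn$, which your outline omits. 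Second, although your ordinal arithmetic ($\alpha^\omega=\omega^{\omega^{\nn-3}\cdot\omega}=\omega^{\omega^\nnt}$) and the base case $\BP2\cong\mathbb{N}$ are correct, the ordinal computation is exactly the "routine" part; the genuine content of Burckel's theorem lies in the structural correspondence you are importing without proof. Indeed, one of the points the present paper makes is that, for $\BP\nn$ and Artin generators, that correspondence seems to require Burckel's transfinite induction, whereas for $\BKL\nn$ and Birman--Ko--Lee generators an elementary treatment is possible; your proposal does not resolve that difficulty, it sidesteps it.
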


\subsection{Dual braid monoids}
\label{SS:DualBraidMonoids}

In this paper, we consider another monoid of which~$\BB\nn$ is a group of fractions, namely the
Birman-Ko--Lee monoid, also called \emph{the dual braid
monoid}.

\begin{defi}
For $1\le\indi<\indii$, put
\begin{equation}
\label{E:DualGenerator}
\aa\indi\indii = \sig\indi ... \sig\indiit \, \sig\indiio \, 
\siginv\indiit ... \siginv\indi.
\end{equation}
Then the \emph{dual braid monoid~$\BKL\nn$} is the
submonoid of $\BB\nn$ generated by the
braids~$\aa\indi\indii$ with $1\le\indi<\indii\le\nn$; the
braids~$\aa\indi\indii$ are called the
\emph{Birman--Ko--Lee generators}.
\end{defi}

\begin{rema}
In~\cite{Birman}, the braid $\aa\indi\indii$ is defined to be $\sig\indiio... \sig\indip \, \sig\indi \, \siginv\indip ... \siginv\indiio$.
Both options lead to isomorphic monoids, but our choice is the only one that naturally leads to the suitable embedding of~$\BKL\nno$ into~$\BKL\nn$.
\end{rema}

By definition, we have $\sig\indi = \aa\indi\indip$,
so the dual braid monoid~$\BKL\nn$ includes the positive
braid monoid~$\BP\nn$. The inclusion is proper for
$\nn\ge3$: the braid~$\aa13$, which belongs
to~$\BKL\nn$ by definition,  does not belong to
$\BP\nn$.

The following notational convention will be useful in
the sequel. For $\pp \le \qq$, we write $[\indi,\indii]$
for the interval $\{\indi,\Ldots,\indii\}$ of
$\mathbb{N}$. We say that $[\indi,\indii]$ is \emph{nested} in
$[\indiii,\indiv]$ if we have
$\indiii<\indi<\indii<\indiv$. The following results
were proved by Birman, Ko, and Lee.

\begin{prop}\cite{Birman}

$(i)$ In terms of the generators~$\aa\indi\indii$, the
monoid~$\BKL\nn$ is presented by  the relations
\begin{align}
\label{E:DualCommutativeRelation}
\aa\indi\indii\aa\indiii\indiv&= \aa\indiii\indiv\aa\indi\indii \text{\quad for $[\indi, \indii]$ and $[\indiii, \indiv]$ disjoint or nested},\\
\label{E:DualNonCommutativeRelation}
\aa\indi\indii\aa\indii\indiii&= \aa\indii\indiii\aa\indi\indiii =\aa\indi\indiii\aa\indi\indii \text{\quad for $1 \le \indi<\indii<\indiii\le\nn$}.
\end{align}

$(ii)$ The monoid~$\BKL\nn$ is a Garside monoid
with Garside element~$\ddd\nn$ defined by
\begin{equation}
\label{E:DualGarsideElement}
\ddd\nn=\aa12\,\aa23\,...\,\aa\nno\nn\quad
(=\sig1\,\sig2\,...\,\sig\nno).
\end{equation}
\end{prop}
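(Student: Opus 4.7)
\medskip

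Since this proposition is essentially a compilation of the main structural results of~\cite{Birman}, the plan divides naturally into proving (i) (the presentation) and (ii) (the Garside structure), together with the verification that $\ddd\nn = \aa12\,\aa23\,\cdots\,\aa\nno\nn$ equals $\sig1\,\sig2\,\cdots\,\sig\nno$.

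For (i), the plan is to first check that the relations~\eqref{E:DualCommutativeRelation} and~\eqref{E:DualNonCommutativeRelation} hold inside~$\BB\nn$. Using the definition~\eqref{E:DualGenerator} and the braid relations in~\eqref{E:BnPresentation}, the disjoint case of~\eqref{E:DualCommutativeRelation} follows immediately from far commutation, while the nested case and the triangle relations of~\eqref{E:DualNonCommutativeRelation} reduce to repeated applications of the braid relation $\sig\ii\sig\ip\sig\ii=\sig\ip\sig\ii\sig\ip$ after cancelling conjugations; I would handle this by induction on $\indiv{-}\indiii$ (respectively on $\indiii{-}\indi$), reducing each instance to one of the length-three equalities in rank~$\le 3$ of the $\aa{}{}$-generators. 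Let $\widetilde{M}_\nn$ denote the abstract monoid presented by~\eqref{E:DualCommutativeRelation}--\eqref{E:DualNonCommutativeRelation}. The relations just verified provide a surjective monoid homomorphism $\widetilde{M}_\nn\twoheadrightarrow\BKL\nn$; to see it is also injective, one establishes a normal form on~$\widetilde{M}_\nn$ (for example the greedy normal form relative to~$\ddd\nn$ once (ii) is in place, or directly an inductive normal form based on the maximal right divisor in the image of~$\widetilde{M}_\nno$), and checks that two words yielding the same normal form must already be equal modulo the listed relations.

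For (ii), the first step is to show that $\widetilde{M}_\nn$ is cancellative and admits left and right lcms/gcds; here the standard approach is to set up a word-rewriting system based on the relations, prove confluence (the critical pairs to resolve are exactly those that come from overlaps between~\eqref{E:DualCommutativeRelation} and~\eqref{E:DualNonCommutativeRelation}), and deduce cancellativity and the existence of lcms from confluence. Next, I would prove the equality $\aa12\,\aa23\,\cdots\,\aa\nno\nn = \sig1\,\sig2\,\cdots\,\sig\nno$ by induction on $\nn$: once one rewrites $\aa{\nno}{\nn} = \sig1\cdots\sig\nnt\sig\nno\siginv\nnt\cdots\siginv1$ and uses the non-commutative relations to push the~$\aa{\indi}{\nn}$'s past the initial block, telescoping gives the claim. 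Finally, I would identify the set of left divisors of~$\ddd\nn$ with the noncrossing partition lattice $NC_\nn$, by associating to each such divisor the partition whose blocks record which generators~$\aa\indi\indii$ can occur in its simple factorizations; a direct count then gives that there are $C_\nn$ simple elements, and the lattice structure of $NC_\nn$ supplies the lattice of simple elements. Closure of the divisors of~$\ddd\nn$ under left/right lcm and gcd, together with the verification that $\ddd\nn$ is balanced (same left and right divisors, obtained via conjugation by~$\ddd\nn$ inducing a rotation of the noncrossing partitions), then gives the Garside structure.

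The main obstacle, in my view, is the injectivity step in (i): proving that no relations beyond~\eqref{E:DualCommutativeRelation}--\eqref{E:DualNonCommutativeRelation} are needed requires either a genuine normal form theorem or a careful confluence analysis of overlaps, and this is where the bulk of the work lies. Everything else — verifying the relations hold in~$\BB\nn$, computing~$\ddd\nn$, and identifying the simples with noncrossing partitions — is comparatively direct once one commits to the combinatorial model.
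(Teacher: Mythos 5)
The paper does not prove this statement: it is quoted verbatim from~\cite{Birman}, and the identification of simple elements with non-crossing partitions that you invoke is attributed in the surrounding text to~\cite{Bessis}. So there is no ``paper's proof'' to compare against; what you have written is a reconstruction of the Birman--Ko--Lee argument, and as such it is broadly along the expected lines but has a few soft spots. First, the equality $\ddd\nn=\aa12\,\aa23\cdots\aa\nno\nn=\sig1\sig2\cdots\sig\nno$ needs no induction and no use of the relations at all: from~\eqref{E:DualGenerator}, $\aa\indi\indip=\sig\indi$ identically (the conjugating block $\sig\indi\cdots\sig\indiit$ is empty when $\indii=\indi{+}1$), so the claimed identity is literally a substitution. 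Your plan of ``pushing the $\aa{\indi}{\nn}$'s past the initial block'' is solving a problem that is not there. Second, and more seriously, your injectivity argument for (i) is circular as stated: you propose to separate elements of $\widetilde{M}_\nn$ via the greedy normal form relative to $\ddd\nn$ ``once (ii) is in place,'' but (ii) itself is a statement about $\BKL\nn$, which you only know to be a quotient of $\widetilde{M}_\nn$ until injectivity is proved. The standard fix is to establish cancellativity, conditional lattice structure, and a normal form \emph{at the level of the abstract monoid $\widetilde{M}_\nn$ first}, then deduce both (i) and (ii) simultaneously; you gesture at this (``or directly an inductive normal form'') but do not commit to it, and the version that quotes (ii) must be discarded. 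Third, the route through non-crossing partitions is historically the Bessis/Brady refinement; the original Birman--Ko--Lee proof proceeds by explicit band-generator combinatorics (left-/right-canonical forms) rather than rewriting-system confluence. Either route can be made rigorous, but you should be aware that what you sketch is closer to the later treatment than to~\cite{Birman} itself.
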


Garside monoids are defined for instance
in~\cite{DehornoyParis} or~\cite{Dehornoy:GG}.
In every Garside
monoid, conjugating by the Garside element defines an
automorphism (.
 In the case of~$\BKL\nn$, the
automorphism~$\ff\nn$
defined by $\ff\nn(\br)=\ddd\nn\,\br\,\ddd\nn\inv$ has order
$\nn$, and its action on the generators~$\aa\indi\indii$ is
as follows.

\begin{lemm}
For all $\indi,\indii$ with $1\le\indi<\indii\le\nn$, we have
\begin{equation}
\label{E:GarsideAutoOnLetter}
\ff\nn(\aa\indi\indii)=
\begin{cases}
\aa\indip\indiip & \text{for $\indii\le\nno$,}\\
\aa1\indip & \text{for $\indii=\nn$.}
\end{cases}
\end{equation}
\end{lemm}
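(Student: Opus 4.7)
The plan is to compute $\ff\nn(\aa\indi\indii) = \ddd\nn\,\aa\indi\indii\,\ddd\nn\inv$ directly via the Artin presentation: since $\ff\nn$ is an automorphism of $\BB\nn$ and $\aa\indi\indii$ is expressed by~\eqref{E:DualGenerator} as a word in the $\sigpm\jj$, it suffices to determine $\ff\nn(\sig\jj)$ on each Artin generator and then re-expand.

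The first subgoal is the classical identity $\ff\nn(\sig\jj) = \sig\jjp$ for $1 \le \jj \le \nnt$, together with $\ff\nn(\sig\nno) = \aa1\nn$. For the first formula I would rewrite $(\sig1\cdots\sig\nno)\sig\jj$ by commuting $\sig\jj$ leftward past $\sig\nno,\ldots,\sig\jjpp$ (indices at distance at least $2$), then applying once the braid relation $\sig\jj\,\sig\jjp\,\sig\jj = \sig\jjp\,\sig\jj\,\sig\jjp$, and finally commuting the produced $\sig\jjp$ leftward past $\sig\jjo,\ldots,\sig1$, so that the word rewrites as $\sig\jjp(\sig1\cdots\sig\nno)$. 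For $\jj = \nno$ the central block $\sig\nno\,\sig\nno\,\siginv\nno$ collapses to $\sig\nno$, and one reads $\ff\nn(\sig\nno) = \sig1\cdots\sig\nno\,\siginv\nnt\cdots\siginv1 = \aa1\nn$ directly from~\eqref{E:DualGenerator}.

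With this in hand, the case $\indii \le \nno$ is immediate: every Artin letter occurring in~\eqref{E:DualGenerator} for $\aa\indi\indii$ has index at most $\indiio \le \nnt$, so applying $\ff\nn$ letter by letter yields $\sig\indip\cdots\sig\indiio\,\sig\indii\,\siginv\indiio\cdots\siginv\indip$, which is precisely $\aa\indip\indiip$.

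The main effort goes into the wrap-around case $\indii = \nn$. Letter-by-letter application of $\ff\nn$ to~\eqref{E:DualGenerator} produces
\[\ff\nn(\aa\indi\nn) = (\sig\indip\cdots\sig\nno)\,\aa1\nn\,(\siginv\nno\cdots\siginv\indip),\]
and the task reduces to recognizing the right-hand side as $\aa1\indip$. For this I plan to establish the auxiliary identity $\aa1\mmp = \siginv\mm\,\aa1\mm\,\sig\mm$ valid for $2 \le \mm \le \nno$, by expanding both sides via~\eqref{E:DualGenerator}, commuting $\sig\mm$ past $\sig1,\ldots,\sig\mmt$ (indices at distance at least $2$), and applying the braid relation $\sig\mm\,\sig\mmo\,\sig\mm = \sig\mmo\,\sig\mm\,\sig\mmo$ exactly once at the central letter. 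Iterating this identity for $\mm = \indip,\indipp,\ldots,\nno$ yields $\aa1\nn = (\siginv\nno\cdots\siginv\indip)\,\aa1\indip\,(\sig\indip\cdots\sig\nno)$, which conjugates into the desired equality $\ff\nn(\aa\indi\nn) = \aa1\indip$. The main obstacle is precisely this telescoping bookkeeping: one has to match the factors produced by applying $\ff\nn$ to the long defining word of $\aa\indi\nn$ with the iterated conjugation coming from the auxiliary identity; once that is set up correctly, the computation closes automatically.
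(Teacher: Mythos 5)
Your computation is correct. The paper states this lemma without proof, treating the verification as routine, so there is no paper proof to compare against; your argument fills that gap in a standard way. The two ingredients $\ff\nn(\sig\jj)=\sig\jjp$ for $\jj\le\nnt$ and $\ff\nn(\sig\nno)=\aa1\nn$ are established correctly by the commutation-plus-one-braid-relation scheme, the case $\indii\le\nno$ then follows by direct substitution, and for $\indii=\nn$ the auxiliary identity $\aa1\mmp=\siginv\mm\,\aa1\mm\,\sig\mm$ (itself a one-step consequence of the relations~\eqref{E:BnPresentation}) telescopes exactly as you claim to give $\aa1\nn=(\siginv\nno\cdots\siginv\indip)\,\aa1\indip\,(\sig\indip\cdots\sig\nno)$, from which $\ff\nn(\aa\indi\nn)=\aa1\indip$ follows. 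The only remark worth adding is that one could alternatively reach the case $\indii=\nn$ slightly faster from the identity $\aa\indi\indii=\dddd\indi\indii\,\dddd\indi\indiio\inv$ of~\eqref{E:AConjugateD} together with~\eqref{E:RelationD:ii}, bypassing the explicit telescoping; but your route is self-contained and uses only the Artin presentation~\eqref{E:BnPresentation} and the definition~\eqref{E:DualGenerator}, which is arguably the cleanest way to state it.
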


The relations~\eqref{E:GarsideAutoOnLetter} show that
the action of~$\ff\nn$ is  similar to a rotation. Note that the
relation $\ff\nn(\aa\indi\indii)=\aa\indip\indiip$ always holds  provided
indices are taken mod $\nn$ and possibly switched, so
that, for instance, $\aa\indip\nnp$ means
$\aa1\indip$.

For every braid~$\br$ in~$\BKL\nn$ and
every~$\kk$, the definition of~$\ff\nn$ implies the
relation
\begin{equation}
\label{E:PushDdd}
\ddd\nn\,\ff\nn^\kk(\br)=\ff\nn^\kkp(\br)\,\ddd\nn,
\quad\ddd\nn\inv\,\ff\nn^\kk(\br)=\ff\nn^\kko(\br)\,
\ddd\nn\inv.
\end{equation}

By definition, the braid $\aa\indi\indii$ is the
conjugate of $\sig\indiio$ by the braid
$\sig\indi\Ldots\sig\indiit$. Braids of the latter type play
an important role in the sequel, and we give them a
name.

\begin{defi}
For $\indi\le\indii$, we put 
\begin{equation}
\label{E:DecompAWithD}
\dddd\indi\indii = \aa\indi\indip\,\aa\indip\indipp\,...\,
\aa\indiio\indii \ (\  = \sig\indi\sig\indip \,...\, \sig\indiio
). 
\end{equation}
Note that the Garside element~$\ddd\nn$ of $\BKL\nn$ is equal to $\dddd1\nn$ and that the braid $\dddd\indi\indi$ is the trivial one, \ie, is  the braid $1$.
\end{defi}

With this notation, we easily obtain
\begin{align}
\label{E:AConjugateD}
\aa\indi\indii &= \dddd\indi\indii\, \dddd\indi{\indii-1}
\inv = \dddd\indi\indiio\ \sig\iio\ \dddd\indi\indiio\inv
&&\qquad\textrm{for $\indi<\indii$},\\
\label{E:RelationD:i}\aa\indi\indii&=\dddd\indip\indii\inv\ 
\dddd\indi\indii&&\qquad\textrm{for $\indi<\indii$},\\
\label{E:RelationD:ii}\dddd\indi\indiii&=\dddd\indi\indii\ \dddd\indii\indiii&&\qquad\textrm{for $\indi\le\indii\le\indiii$},\\
\label{E:RelationD:iii}\dddd\indi\indii\ \dddd{\indiii}{\indiv}&=\dddd{\indiii}{\indiv}\ \dddd\indi\indii&&\qquad\textrm{for $\indi\le\indii<\indiii\le\indiv$}.
\end{align}

Relations \eqref{E:AConjugateD}, \eqref{E:RelationD:ii}, and
\eqref{E:RelationD:iii} are direct consequences of the definition of
$\dddd\indi\indii$. Relation~\eqref{E:RelationD:i} is obtained by
using an induction on~$\indi$ and the equality 
$\dddd\indip\indii\inv\,\dddd\indi\indii=\sig\indi\,
\dddd\indipp\indii\inv\,\dddd\indip\indii\,  \siginv
\indi$, which holds for $\indi<\indii$.

\subsection{The restriction of the braid ordering to the dual braid
monoid}
\label{SS:RestrictionBraidOrderingDualBraidMonoid}

The aim of this paper is to describe the restriction of the
braid ordering of Definition~\ref{D:Order} to the dual braid
monoid~$\BKL\nn$. The initial observation is:

\begin{prop}\cite{Laver}
For each~$\nn\ge2$, the restriction of the braid ordering to
the monoid~$\BKL\nn$ is a well-ordering.
\end{prop}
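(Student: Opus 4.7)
The plan is to verify that the dual braid monoid $\BKL\nn$ fits into the framework of Laver's well-ordering theorem (Theorem~\ref{P:WellOrder}), and then simply invoke that result. The two hypotheses to check are finite generation and that each generator is a conjugate of some Artin generator $\sig\ii$.

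First, I would recall that by definition $\BKL\nn$ is generated by the family $\{\aa\indi\indii \mid 1\le\indi<\indii\le\nn\}$, which contains exactly $\binom{\nn}{2}$ elements and is therefore finite. Since $\BB\nn$ embeds canonically into $\BB\infty$, the monoid $\BKL\nn$ can be viewed as a finitely generated submonoid of $\BB\infty$, which is the setting of Theorem~\ref{P:WellOrder}.

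Second, I would observe that the defining formula~\eqref{E:DualGenerator}, namely
\[
\aa\indi\indii = \sig\indi \sig\indip \cdots \sig\indiit \, \sig\indiio \, \siginv\indiit \cdots \siginv\indip \siginv\indi,
\]
exhibits $\aa\indi\indii$ as the conjugate of the Artin generator $\sig\indiio$ by the braid $\sig\indi\sig\indip\cdots\sig\indiit$ (which in the notation of~\eqref{E:DecompAWithD} is precisely $\dddd\indi\indiio$). In particular, every Birman--Ko--Lee generator is conjugate in $\BB\nn$ to some $\sig\ii$, so the second hypothesis of Theorem~\ref{P:WellOrder} is satisfied.

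Having checked both hypotheses, the proposition follows immediately: Theorem~\ref{P:WellOrder} asserts that the restriction of~$<$ to any such submonoid is a well-ordering, and $\BKL\nn$ is such a submonoid. There is essentially no obstacle in this argument, since the nontrivial content—the existence of well-foundedness for conjugate-generated submonoids of $\BB\infty$—is already packaged in Laver's theorem; the only thing to verify here is the completely explicit conjugacy visible in the defining word~\eqref{E:DualGenerator}. The sharper statement of the corollary to Theorem~$1$, which determines the exact ordinal type as $\omega^{\omega^\nnt}$, will require the constructive approach developed in the remainder of the paper.
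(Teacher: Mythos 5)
Your proof is correct and follows exactly the same route as the paper: it verifies the two hypotheses of Laver's theorem (finitely many generators, each conjugate to some $\sig\ii$ via the explicit formula~\eqref{E:DualGenerator}) and then invokes Theorem~\ref{P:WellOrder}. No differences worth noting.
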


\begin{proof}
By definition, the braid~$\aa\indi\indii$ is a conjugate
of the braid~$\sig\indiio$. So $\BKL\nn$ is generated by finitely many
braids, each of which is a conjugate of some~$\sig\ii$. By Laver's
Theorem (Theorem~\ref{P:WellOrder}), the restriction of~$<$
to~$\BKL\nn$ is a well-ordering.
\end{proof}

As in the case of~$\BP\nn$, Laver's Theorem, which is an
non-effective result based on the so-called Higman's Lemma
\cite{Higman}, leaves the determination of the isomorphism type
of~$<\,\upharpoonright_{\BKL\nn}$ open. This is the question we
shall address in the sequel.

Before introducing our specific methods, let us begin with some
easy observations.

\begin{lemm}
Every braid~$\br$ in $\BKL\nn$ except $1$ satisfies~$\br >1$.
\end{lemm}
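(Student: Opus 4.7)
The plan is to produce, for every nontrivial $\br\in\BKL\nn$, an explicit braid word representing~$\br$ that is $\sig\ii$-positive for some~$\ii$; by Definition~\ref{D:Order} this makes~$\br$ a $\sigg$-positive braid, so that $1\inv\br=\br$ is $\sigg$-positive and $1<\br$ holds.

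The starting observation concerns a single Birman--Ko--Lee generator. The defining formula~\eqref{E:DualGenerator},
$$\aa\indi\indii = \sig\indi \Ldots \sig\indiit\,\sig\indiio\,\siginv\indiit \Ldots \siginv\indi,$$
exhibits a braid word in which every letter $\sigpm\jj$ satisfies $\jj\le\indiio$, the letter $\sig\indiio$ occurs exactly once and positively, and $\siginv\indiio$ does not occur at all. That word is therefore already $\sig\indiio$-positive.

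Building on this, the argument I have in mind runs as follows. Given a nontrivial $\br\in\BKL\nn$, I would fix any expression $\br=\aa{\pp_1}{\qq_1}\Ldots\aa{\pp_\kk}{\qq_\kk}$ of~$\br$ as a product of generators; this product is necessarily nonempty, since the empty product equals~$1$. Setting $\qq=\max_\jj\qq_\jj$ and $\ii=\qq-1$, I would concatenate the above defining words of the $\aa{\pp_\jj}{\qq_\jj}$ into a single braid word~$\ww$ representing~$\br$, and check that $\ww$ is $\sig\ii$-positive. By the previous observation, every letter of~$\ww$ has index at most~$\ii$, and for each~$\jj$ with $\qq_\jj=\qq$ the letter~$\sig\ii$ occurs positively in the contribution of~$\aa{\pp_\jj}{\qq_\jj}$.

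The only point requiring a little care is that $\siginv\ii$ must never occur in~$\ww$: in each defining word, the letter of maximal index occurs positively and has index $\qq_\jj-1$, while every negative letter has index at most $\qq_\jj-2$, so any $\siginv\jj$ occurring in~$\ww$ satisfies $\jj\le\qq_\jj-2<\ii$. Once this is verified, $\ww$ is $\sig\ii$-positive, which yields the conclusion. The argument relies only on the explicit form of the Birman--Ko--Lee generators and on Definition~\ref{D:Order}; in particular, neither Property~$\mathbf{A}$ nor Theorem~\ref{P:Order} is invoked, which is consistent with the convention announced earlier that proofs of the corollaries to come should be allowed to use only the trivial properties of~$<$.
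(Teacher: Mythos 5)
Your proof is correct and follows essentially the same route as the paper: observe that the defining word~\eqref{E:DualGenerator} for each generator $\aa\indi\indii$ is $\sig\indiio$-positive, and deduce the result for a nonempty product of generators. The paper states only the first observation and leaves the concatenation argument implicit; you have simply spelled out the (easy) verification that concatenating these words produces a $\sig\ii$-positive word for $\ii=\max_\jj \qq_\jj-1$.
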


\begin{proof}
By definition, the braid $\aa\indi\indii$ is
$\sigg$-positive---actually it is $\sig\indiio$-positive in the sense of
Definition~\ref{D:Order}.
\end{proof}

\begin{lemm}
\label{X:OrdreA}
For each~$\nn\ge2$, we have
\begin{equation}
\label{E:StandarOrderofA}
 1<\aa12<\aa23<\aa13<\Ldots<\aa1\nno<\aa\nno\nn<\aa\nnt\nn
<\Ldots<\aa1\nn.
\end{equation}
\end{lemm}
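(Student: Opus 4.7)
My plan is to verify each inequality between consecutive entries of~\eqref{E:StandarOrderofA} and then conclude by transitivity (Lemma~\ref{L:Transitive}). Since $<$ is left-invariant, checking $\alpha<\beta$ reduces to exhibiting a $\sigg$-positive braid word that represents $\alpha\inv\,\beta$. There are three kinds of consecutive steps in the chain: the initial step $1<\aa12$; the \emph{within-block} steps $\aa\iip\jj<\aa\ii\jj$ for $1\le\ii\le\jjt$ and $3\le\jj\le\nn$; and the \emph{between-block} steps $\aa1\jj<\aa\jj\jjp$ for $2\le\jj\le\nno$.

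The first step is trivial because $\aa12=\sig1$ is itself a $\sig1$-positive word. For a between-block step, the definition~\eqref{E:DualGenerator} represents $\aa1\jj$ by a word that uses only the letters $\sigpm1,\Ldots,\sigpm\jjo$; hence $\aa1\jj\inv\,\aa\jj\jjp=\aa1\jj\inv\,\sig\jj$ admits a representative in which $\sig\jj$ occurs once positively and no $\sigpm\kk$ with $\kk>\jj$ occurs at all, so this word is $\sig\jj$-positive.

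The real work lies in the within-block step. I will invoke the dual braid relation~\eqref{E:DualNonCommutativeRelation} with the triple $\ii<\iip<\jj$, which provides the equality $\aa\iip\jj\,\aa\ii\jj=\aa\ii\jj\,\aa\ii\iip$. Since $\aa\ii\iip=\sig\ii$, this rearranges to
$$\aa\iip\jj\inv\,\aa\ii\jj \;=\; \aa\ii\jj\,\siginv\ii,$$
and~\eqref{E:DualGenerator} expands the right-hand side as the explicit braid word
$$\sig\ii\,\sig\iip\Ldots\sig\jjt\,\sig\jjo\,\siginv\jjt\Ldots\siginv\iip\,\siginv\ii\,\siginv\ii.$$
In this word the top-index letter $\sig\jjo$ occurs exactly once and positively, and no $\sigpm\kk$ with $\kk>\jjo$ appears, so the word is $\sig\jjo$-positive.

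Transitivity then chains the three types of comparison into~\eqref{E:StandarOrderofA}. The only subtlety is the within-block step, where the payoff of the dual braid relation is that it cancels $\aa\iip\jj$ on the left precisely so that the extra $\siginv\ii$ on the right does not disturb the single positive top-index letter $\sig\jjo$.
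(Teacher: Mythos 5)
Your proof is correct, and it takes a genuinely different route from the paper's. The paper establishes the stronger pairwise claim that $\aa\indi\indii<\aa\indiii\indiv$ holds whenever $\indii<\indiv$, or $\indii=\indiv$ and $\indi>\indiii$, and it does so by manipulating the auxiliary braids $\dddd\indi\indii$ through relations~\eqref{E:RelationD:i}--\eqref{E:RelationD:iii} until the quotient $\aa\indi\indii\inv\,\aa\indiii\indiv$ appears in the explicitly $\sig\indivo$-positive form $\dddd\indi\indivo\,\dddd\indiii\indio\,\aa\indio\indiv\,\dddd\indiii\indivo\inv$. Your argument proves only the consecutive inequalities and sidesteps the $\dddd$-machinery entirely: for the within-block step, the dual braid relation~\eqref{E:DualNonCommutativeRelation} immediately gives $\aa\iip\jj\inv\,\aa\ii\jj=\aa\ii\jj\,\siginv\ii$, whose expansion via~\eqref{E:DualGenerator} is visibly $\sig\jjo$-positive, and for the between-block and initial steps the required $\sigg$-positive words are read off directly. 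Since the lemma asserts only the chain, transitivity (Lemma~\ref{L:Transitive}) is all that is needed to glue the steps together, so you obtain the same conclusion with less overhead; by contrast, the paper's closing ``if and only if'' requires invoking that $<$ is a linear ordering, a property the surrounding development is at pains not to presuppose. Both proofs are sound; yours is the more elementary and more economical of the two.
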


\begin{proof}
We claim that $\aa\indi\indii<\aa\indiii\indiv$ holds 
if and only if we have either $\indii<\indiv$, or
$\indii=\indiv$ and
$\indi>\indiii$. Assume first $\indii<\indiv$. Then, the braid
$\aa\indi\indii\inv$ is
$\sig\indiio$-negative while $\aa\indiii\indiv$ is
$\sig\indivo$-positive with $\indii<\indiv$, hence the quotient
$\aa\indi\indii\inv\,\aa\indiii\indiv$ is $\sig\indivo$-positive, which
implies $\aa\indi\indii<\aa\indiii\indiv$. Assume now $\indii=\indiv$
and $\indi>\indiii$. Then, by relation~\eqref{E:AConjugateD}, the
quotient~$\aa\indi\indii\inv\,\aa\indiii\indiv$ is equal to
$\dddd\indi\indivo\,\dddd\indi\indiv\inv\,\dddd\indiii\indiv\,\dddd\indiii\indivo\inv$.
Applying Relation~\eqref{E:RelationD:ii} on~$\dddd\indiii\indiv$, we
obtain 
\[
\aa\indi\indii\inv\,\aa\indiii\indiv=\dddd\indi\indivo\,\dddd\indi\indiv\inv\,\dddd\indiii\indio\,\dddd\indio\indiv\,\dddd\indiii\indivo\inv.
\]
Then, by applying Relation~\eqref{E:RelationD:iii} on $\dddd\indi\indiv\inv\,\dddd\indiii\indio$, we obtain
\[
\aa\indi\indii\inv\,\aa\indiii\indiv=\dddd\indi\indivo\,\dddd\indiii\indio\,\dddd\indi\indiv\inv\,\dddd\indio\indiv\,\dddd\indiii\indivo\inv.
\]
Finally, Relation~\eqref{E:RelationD:i} on $\dddd\indi\indiv\inv\,\dddd\indio\indiv$ implies 
\[
\aa\indi\indii\inv\,\aa\indiii\indiv=\dddd\indi\indivo\,\dddd\indiii\indio\,\aa\indio\indiv\,\dddd\indiii\indivo\inv.
\]
The braid $\aa\indio\indiv$ is $\sig\indivo$-positive, while the braids $\dddd\indi\indivo$, $\dddd\indiii\indio$ and $\dddd\indiii\indivo\inv$ are  $\sig\indv$-positive or $\sig\indv$-negative for $\indv<\indivo$.
Hence the quotient $\aa\indi\indii\inv\,\aa\indiii\indiv$ is $\sig\indivo$-positive, which
implies $\aa\indi\indii<\aa\indiii\indiv$.

As $<$ is a linear ordering, this is enough to conclude, \ie, the
implications we proved above are equivalences.
\end{proof}

\subsection{The rotating normal form}
\label{SS:TheRotatingNormalForm}

In this section we briefly recall the construction of the rotating normal form of~\cite{F:SR}.

\begin{defi}
For $\nn\ge3$ and $\br$ a braid of~$\BKL\nn$.
The maximal braid~$\br_1$ lying in~$\BKL\nno$ that right-divides the braid~$\br$ is called the 
\emph{$\BKL\nno$-tail} of~$\br$.
\end{defi}

\begin{prop}\cite{F:SR}
\label{P:Splitting}
Assume $\nn\ge3$ and that $\br$ is a nontrivial braid in~$\BKL\nn$.
Then there exists a unique sequence $(\br_\brdi,\Ldots,\br_1)$ in $\BKL\nno$ satisfying $\br_\brdi \not= 1$ and
\begin{conditions}
\label{E:SplittingDecomposition}
\cond{$\br=\ff\nn^\brdio(\br_\brdi)\cdot...\cdot\ff\nn(\br_2)\cdot\br_1,$}\\
\label{E:SplittingCondition}
\cond{for each $\kk\ge1$, the~$\BKL\nno$-tail of~$\ff\nn^{\brdi\minus\kk}(\br_\brdi)\cdot...\cdot\ff\nn(\br_\kkp)$ is trivial.}
\end{conditions}

\end{prop}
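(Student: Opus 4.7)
My approach is to construct the sequence $(\br_\brdi,\Ldots,\br_1)$ iteratively by peeling off the $\BKL\nno$-tail and rotating by~$\ff\nn\inv$, and then to prove uniqueness by showing that each entry is forced by the preceding ones. Setting $\br^{(0)}:=\br$, I recursively define $\br_\kk$ to be the $\BKL\nno$-tail of~$\br^{(\kk-1)}$ and then put $\br^{(\kk)}:=\ff\nn\inv(\br^{(\kk-1)}\,\br_\kk\inv)$, which lies in~$\BKL\nn$ since $\ff\nn$ is an automorphism of that monoid. Letting $\brdi$ be the smallest index with $\br^{(\brdi)}=1$, one has $\br^{(\brdio)}=\br_\brdi\not=1$, and unrolling the recursion $\br^{(\kk-1)}=\ff\nn(\br^{(\kk)})\,\br_\kk$ yields the factorisation~(i). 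Condition~(ii) is then immediate from maximality: the product $\ff\nn^{\brdi-\kk}(\br_\brdi)\,\Ldots\,\ff\nn(\br_\kkp)$ equals $\ff\nn(\br^{(\kk)})=\br^{(\kk-1)}\,\br_\kk\inv$, and any nontrivial $\BKL\nno$-right-divisor~$\brr$ of this expression would turn $\brr\,\br_\kk$ into a strictly larger $\BKL\nno$-right-divisor of~$\br^{(\kk-1)}$, contradicting the choice of~$\br_\kk$.

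The main obstacle is termination, since whenever $\br_\kk=1$ the atom-length $\len{\br^{(\kk)}}$ equals $\len{\br^{(\kk-1)}}$ in the homogeneous Garside monoid~$\BKL\nn$. To handle this I would establish the following key observation: every nontrivial $\brbr\in\BKL\nn$ admits some $\jj\in\{0,1,2\}$ such that $\ff\nn^{\minus\jj}(\brbr)$ has a nontrivial $\BKL\nno$-tail. Indeed, pick any atom $\aa\indi\indii$ right-dividing~$\brbr$; the rotation formula $\ff\nn(\aa\indi\indii)=\aa\indip\indiip$ (with boundary case $\aa\indi\nn\mapsto\aa1\indip$) shows that at most two applications of~$\ff\nn\inv$ transport $\aa\indi\indii$ into~$\BKL\nno$, the worst case being $\aa1\nn\mapsto\aa{\nno}\nn\mapsto\aa{\nn-2}{\nno}$, and the transported atom still right-divides $\ff\nn^{\minus\jj}(\brbr)$. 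Consequently, among any three consecutive $\br_\kk$ at least one is nontrivial, so $\len{\br^{(\kk)}}$ strictly decreases within every window of three iterations and the construction halts in at most $3\,\len\br$ steps.

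For uniqueness I would induct lexicographically on the pair $(\len\br,\jj)$, where $\jj$ is the minimal shift witness described above. Given any valid splitting, condition~(ii) at $\kk=1$ combined with~(i) forces $\br_1$ to equal the $\BKL\nno$-tail of~$\br$; the latter is well-defined because the $\BKL\nno$-right-divisors of~$\br$ form a sublattice of the divisibility lattice of the Garside monoid~$\BKL\nn$. Then $\br^{(1)}=\ff\nn\inv(\br\,\br_1\inv)$ is determined and strictly smaller in the induction order (its length drops when $\br_1\not=1$, otherwise its depth~$\jj$ drops by one), and it carries the truncated splitting $(\br_\brdi,\Ldots,\br_2)$; the inductive hypothesis thus forces all remaining entries to coincide.
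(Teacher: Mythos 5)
This paper does not prove Proposition~\ref{P:Splitting}; it is cited from~\cite{F:SR}, and the surrounding text only sketches the idea (extract the $\BKL\nno$-tail, rotate, repeat). Your proposal is exactly that canonical construction carried out in full, so it is the ``same approach'' as the one the paper alludes to, made precise.

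The structure of your argument is sound. The recursion $\br^{(\kk)}=\ff\nn\inv\bigl(\br^{(\kk-1)}\,\br_\kk\inv\bigr)$, the unrolling to obtain \eqref{E:SplittingDecomposition}, and the maximality argument giving \eqref{E:SplittingCondition} are all correct; so is the verification that $\br_\brdi=\br^{(\brdio)}\not=1$. Your termination lemma---that for any nontrivial $\brbr$ some $\jj\in\{0,1,2\}$ makes $\ff\nn^{-\jj}(\brbr)$ have a nontrivial $\BKL\nno$-tail, via transport of a right-dividing atom under $\ff\nn\inv$---is a clean and economical way to get finiteness; it is weaker than the sharper constraint recorded in Lemma~\ref{L:LastLetter}$(ii)$ (which forbids $\br_\kk=1$ for $\kk\ge3$) but perfectly adequate here. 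The uniqueness induction on $(\len\br,\jj)$ is also correctly set up, and the reduction to the well-definedness of the $\BKL\nno$-tail is the right pivot.

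The one place where your write-up leans on an unproved assertion is the parenthetical ``the $\BKL\nno$-right-divisors of~$\br$ form a sublattice of the divisibility lattice of~$\BKL\nn$.'' This is precisely the ``easy observation'' the introduction of the present paper attributes to~\cite{F:SR}: one must know that $\BKL\nno$ sits inside $\BKL\nn$ as a parabolic submonoid closed under right-lcm (equivalently, that $\BKL\nno=\BKL\nn\cap\BB\nno$ and that right-lcm in $\BKL\nn$ of two elements of $\BKL\nno$ stays in $\BKL\nno$). Without that closure property neither the existence of a \emph{unique maximal} $\BKL\nno$-right-divisor nor your uniqueness step (forcing $\br_1$ to be the tail from \eqref{E:SplittingCondition} at $\kk=1$) is available. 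It is true and not hard, but it is the genuine content being imported, and a complete proof should state and verify it rather than tuck it into a parenthesis.
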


\begin{defi}
\label{D:Splitting}
The unique sequence $(\br_\brdi,\Ldots, \br_1)$ of braids introduced in Proposition~\ref{P:Splitting} is called the \emph{$\ff\nn$-splitting of~$\br$}. 
Its length, \ie, the parameter~$\brdi$, is called the \emph{$\nn$-breadth}
of~$\br$.
\end{defi}

The idea of the~$\ff\nn$-splitting is very simple: starting 
with a braid~$\br$ of~$\BKL\nn$, we extract the maximal right-divisor that lies in~$\BKL\nno$, \ie, that leaves the~$\nn$th
strand unbraided, then we extract the maximal right-divisor of the
remainder that leaves the first strand unbraided, and so on
rotating by~$2\pi/\nn$ at each step---see Figure~\ref{F:Splitting}.

\setlength{\unitlength}{1mm}
\begin{figure}[htb]
\begin{picture}(104,28)
\put(0,0){\includegraphics{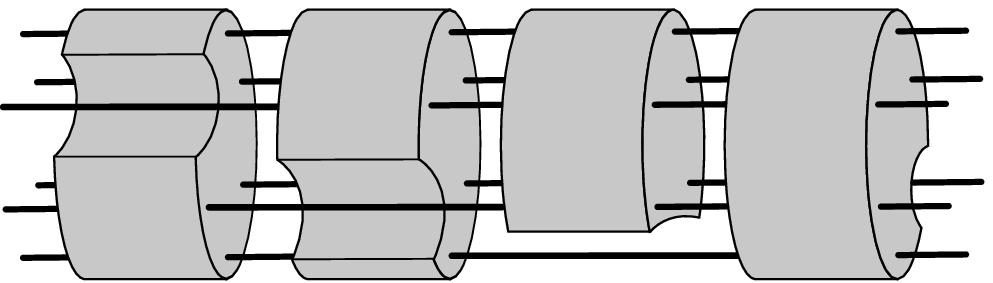}}
\put(80,13){$\br_1$} \put(53,13){$\ff6(\br_2)$}
\put(31,17){$\ff6^2(\br_3)$} \put(8,8){$\ff6^3(\br_4)$}
{\footnotesize
\put(101.55,9.5){$6$} \put(101.5,20){$5$}
\put(100,25){$4$} \put(97.5,16.5){$3$}
\put(97.6,6.5){$2$} \put(100,2){$1$}}
\end{picture}
\caption{{\sf \smaller The $\ff6$-splitting of a braid of~$\BKL6$.  
Starting from the right, we extract the maximal right-divisor that
keeps the sixth strand unbraided, then rotate by $2\pi/6$ and
extract the maximal right-divisor that keeps the first strand
unbraided, etc.}}
\label{F:Splitting}
\end{figure}

As the notion of a~$\ff\nn$-splitting is fundamental in this paper, we give examples.

\begin{exam}
\label{X:GeneratorSplitting}
Let us determine the~$\ff\nn$-splitting of the generators
of~$\BKL\nn$,
\ie,
of $\aa\indi\indii$ with
$1\le\indi<\indii\le\nn$. For $\indii\le\nno$, the generator~$\aa\indi\indii$
belongs to~$\BKL\nno$, then its $\ff\nn$-splitting is $(\aa\indi\indii)$. 
Next, as
$\aa\indi\nn$ does not lie in~$\BKL\nno$, the rightmost entry in
its~$\ff\nn$-splitting is trivial. As we have
$\ff\nn\inv(\aa\indi\nn)=\aa\indio\nno$ for $\indi\ge2$, the~$\ff\nn$-splitting
of~$\aa\indi\nn$ with $\indi\ge2$ is $(\aa\indio\nno,1)$.  
Finally, the braids
$\aa1\nn$ and $\ff\nn\inv(\aa1\nn)=\aa\nno\nn$ do not lie
in~$\BKL\nno$, but
$\ff\nn^{\minus2}(\aa1\nn)=\aa\nnt\nno$ does. So the~$\ff\nn$-splitting
of~$\aa1\nn$ is $(\aa\nnt\nno,1,1)$.  To summarize, the~$\ff\nn$-splitting
of~$\aa\indi\indii$ is
\begin{equation}
\label{E:GeneratorSplitting}
\begin{cases}
(\aa\indi\indii)&\text{for $\indi<\indii\le\nno$},\\
(\aa\indio\nno,1)&\text{for $2\le\indi$ and $\indii=\nn$},\\
(\aa\nnt\nno,1,1)&\text{for $\indi=1$ and $\indii=\nn$}.
\end{cases}
\end{equation}
\end{exam}

By Relation~\eqref{E:GarsideAutoOnLetter}, the application $\ff\nn$ maps each braid~$\aa\indi\indii$ to another similar braid~$\aa\indiii\indiv$.
Using this remark, we can consider the alphabetical homomorphism, still denoted~$\ff\nn$, that maps the letter~$\aa\indi\indii$ to the corresponding letter~$\aa\indiii\indiv$, and extends to word on the letter $\aa\indi\indii$.
Note that, in this way, if the word~$\ww$ on the letter $\aa\indi\indii$ represents the braid~$\br$, then
$\ff\nn(\ww)$ represents~$\ff\nn(\br)$.

We can now recursively define a distinguished expression for each
braid of~$\BKL\nn$ in terms of the generators~$\aa\indi\indii$.

\begin{defi}~
\begin{deflist}
\item For $\br$ in~$\BKL2$, the \emph{$\ff2$-normal form} of~$\br$ is defined to be the unique word~$\aa12^\kk$ that represents~$\br$.
\item For $\nn \ge 3$ and $\br$ in~$\BKL\nn$, the \emph{$\ff\nn$-normal form} of~$\br$ is defined to be the word
$\ff\nn^\brdio(\ww_\brdi) \,... \,\ww_1$ where, for each $\kk$, the word $\ww_\kk$ is the $\ff\nno$-normal form of~$\br_\kk$ and where $(\br_\brdi,\Ldots,\br_1)$ is the~$\ff\nn$-splitting of~$\br$.
\end{deflist}
\end{defi}

As the~$\ff\nn$-splitting of a braid~$\br$ lying in~$\BKL\nno$ is the length $1$ sequence~$(\br)$, the~$\ff\nn$-normal form and $\ff\nno$-normal form of~$\br$ coincide.
Therefore, we can drop the subscript in the~$\ff\nn$-normal
form. From now on, we call \emph{rotating normal form}, or
simply \emph{normal form}, the expression so obtained.

As each braid is represented by a unique normal word, we
can unambiguously use the syntactical properties of its normal form.

We conclude this introductory section with some syntactic
constraints involving $\ff\nn$-splittings and normal words, These
results are borrowed from~\cite{F:SR}.

\begin{defi}
\label{D:Last}
For $\br$ in~$\BKL\nn$, the \emph{last letter} of~$\br$,
denoted~$\last\br$, is defined to be the last latter in the normal
form of~$\br$.
\end{defi}

\begin{lemm} \cite{F:SR}
\label{L:LastLetter}
Assume that $(\br_\brdi,\Ldots,\br_1)$ is a $\ff\nn$-splitting.

$(i)$ For $\kk\ge 2$, the letter~$\last{\br}_\kk$ has the form
$\aa{..}\nno$, unless $\br_\kk$ is trivial;

$(ii)$ For $\kk\ge3$, the braid $\br_\kk$ is different from $1$;

$(iii)$ For $\kk\ge2$, if the normal form of~$\br_\kk$ is $\www\,\aa\nnt\nno$  with
$\www\not=\varepsilon$ (the empty word), then the last letter
of~$\www$ has the form $\aa{..}\nno$.
\end{lemm}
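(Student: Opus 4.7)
All three statements rely on the defining property of the $\ff\nn$-splitting: for $\kk \ge 2$, condition~(ii) of Proposition~\ref{P:Splitting} applied at index $\kk - 1$ asserts that the prefix
\[
\alpha_\kk := \ff\nn^{\brdi-\kk+1}(\br_\brdi) \cdot \ldots \cdot \ff\nn(\br_\kk)
\]
has trivial $\BKL\nno$-tail. The guiding principle throughout is therefore that no nontrivial element of $\BKL\nno$ may right-divide $\ff\nn(\br_\kk)$, and each statement becomes an obstruction argument extracted from this.

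For~(i), I would look at the last letter $\aa\indi\indii$ of the normal form of $\br_\kk\ne1$; since $\br_\kk \in \BKL\nno$, we have $\indii \le \nno$. Assuming $\indii \le \nnt$, formula~\eqref{E:GarsideAutoOnLetter} gives $\ff\nn(\aa\indi\indii) = \aa\indip\indiip$ with $\indiip \le \nno$, a nontrivial $\BKL\nno$-element right-dividing $\ff\nn(\br_\kk)$, which contradicts the observation above. Hence $\indii = \nno$.

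For~(iii), I would unfold the recursive definition of the normal form: since $\br_\kk \in \BKL\nno$, its normal form is built from its own $\ff\nno$-splitting $((\br_\kk)_{\brdi'}, \ldots, (\br_\kk)_1)$ with components in $\BKL\nnt$. Because $\aa\nnt\nno$ does not belong to the generators of $\BKL\nnt$, the hypothesis $\last{\br_\kk} = \aa\nnt\nno$ forces $(\br_\kk)_1 = 1$ and the last letter of $(\br_\kk)_2$ to equal $\aa{\nn-3}{\nn-2}$, the unique $\ff\nno$-preimage of $\aa\nnt\nno$ in $\BKL\nnt$. Applying~(i) and~(iii) recursively at the $\BKL\nno$ level to $(\br_\kk)_2$—or descending further into $(\br_\kk)_3$ when $(\br_\kk)_2$ is a single letter, which invokes~(ii) at the lower level—one obtains that the letter preceding $\aa{\nn-3}{\nn-2}$ has upper index $\nnt$, and its image under $\ff\nno$ has upper index $\nno$, which is what is needed.

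For~(ii), I would argue by contradiction: pick a maximal $\kk_0$ with $3 \le \kk_0 \le \brdi - 1$ and $\br_{\kk_0} = 1$, so that $\br_{\kk_0 + 1} \ne 1$. By~(i), the last letter of $\br_{\kk_0 + 1}$ is some $\aa\indi\nno$, and two applications of~\eqref{E:GarsideAutoOnLetter} yield $\ff\nn^2(\aa\indi\nno) = \aa1\indipp$. Using $\br_{\kk_0} = 1$, condition~(ii) at index $\kk_0 - 1$ reduces to the claim that $\ff\nn^{\brdi-\kk_0+1}(\br_\brdi) \cdots \ff\nn^2(\br_{\kk_0+1})$ has trivial $\BKL\nno$-tail. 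Whenever $\indi \le \nn - 3$, the letter $\aa1\indipp$ lies in $\BKL\nno$ and right-divides $\ff\nn^2(\br_{\kk_0+1})$, producing the desired contradiction. The remaining case $\indi = \nnt$ (i.e.\ $\last{\br_{\kk_0+1}} = \aa\nnt\nno$) is exactly where~(iii) intervenes: the preceding letter is again of form $\aa{..}\nno$, so the same rotation argument reapplies; finiteness of the normal form of $\br_{\kk_0+1}$ guarantees termination with a contradiction. The main obstacle, and the reason (i), (ii), (iii) must be proved in tandem, is precisely this coupling of~(ii) with~(iii).
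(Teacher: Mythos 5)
The paper does not prove this lemma at all: it is tagged \cite{F:SR} and cited as a black box, so there is no internal proof to compare against. On its own merits, your attempt is sound for part~$(i)$, but parts~$(ii)$ and~$(iii)$ both have real gaps at the same boundary case, namely when $\last{\br_\kk} = \aa\nnt\nno$.

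For~$(iii)$, your argument is fine when $(\br_\kk)_2$ has length at least two: the penultimate letter of $\ww_2$ then is, by $(iii)$ at the lower level, of the form $\aa{\cdot}\nnt$, and applying $\ff\nno$ once yields a letter $\aa{\cdot}\nno$. But when $(\br_\kk)_2 = \aa{\nn-3}\nnt$ is a single letter, the predecessor in the normal form of~$\br_\kk$ is $\ff\nno^2(\last{\ww_3})$. Writing $\last{\ww_3} = \aa\indiii\nnt$, this is $\aa1{\indiii+2}$ with $\indiii+2\le\nno$, and $(ii)$ at the lower level only tells you $(\br_\kk)_3\ne1$; it says nothing about $\indiii$, so it does not force $\indiii+2 = \nno$. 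To close this you actually need a separate argument: if $\indiii < \nn-3$, then the tail $\aa1{\indiii+2}\,\aa\nnt\nno$ of the normal form of $\br_\kk$ can be rewritten (by commutation if $\indiii+2<\nnt$, or by the relation $\aa1\nnt\,\aa\nnt\nno = \aa1\nno\,\aa1\nnt$ if $\indiii+2=\nnt$) to expose a right divisor of $\br_\kk$ lying in $\BKL\nnt$, contradicting $(\br_\kk)_1=1$. That step is missing, and the clause ``which invokes $(ii)$'' does not substitute for it.

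For~$(ii)$, the gap is more serious. When $\indi=\nnt$, the last letter of $\ff\nn^2(\br_{\kk_0+1})$ is $\aa1\nn$, which is not in $\BKL\nno$, and the penultimate letter $\aa1{\cdot+2}$ produced by $(iii)$ is not at the end of the word, so it is not a right divisor of the product; one would need to commute it past $\aa1\nn$, and the relation $\aa1\nn\,\aa1\indii = \aa1\indii\,\aa\indii\nn$ goes in the wrong direction for that. In fact a single letter already shows the method can stall: $\aa\nno\nn$ and $\ff\nn(\aa\nno\nn) = \aa1\nn$ both have trivial $\BKL\nno$-tail, so merely knowing that two consecutive splitting conditions hold cannot yield a contradiction. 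Nor can you simply ``continue rotating'': the splitting condition at index $\kk_0-2$ involves $\br_{\kk_0-1}$, which is a fresh and unconstrained factor, so the same computation does not reapply. The phrase ``finiteness of the normal form guarantees termination'' is wishful rather than an argument; you would need some form of the ladder analysis from~\cite{F:SR} (or at least a careful study of the relations $\eqref{E:DualNonCommutativeRelation}$ near the letter $\aa1\nn$) to make this case go through.
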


\section{The rotating ordering}
\label{S:RotatingOrdering}

As explained above, we aim at proving results about the restriction of 
the braid ordering~$<$ to the dual braid monoid~$\BKL\nn$. We
shall do it indirectly, by first introducing an auxiliary ordering~$\SLL$,
and eventually proving that the latter coincides with the original braid
ordering. 

\subsection{Another ordering on~$\BKL\nn$}
\label{SS:Construction}

Using the $\ff\nn$-splitting of Definition~\ref{D:Splitting}, 
every braid of $\BKL\nn$ comes associated with a distinguished finite
sequence of braids belonging to $\BKL\nno$. In this way, every ordering
on~$\BKL\nno$ can be extended to an ordering on~$\BKL\nn$ using a
lexicographic extension. Iterating the process, we can start from the
standard ordering on~$\BKL2$, \ie, on natural numbers, and
recursively define a linear ordering on~$\BKL\nn$.

We recall that, if $(A,\prec)$ is an ordered set, a finite sequence~$\ss$ in~$A$ is called $\mathtt{ShortLex}$-smaller than another finite sequence~$\ss'$ if the length of~$\ss$ is smaller than that of~$\ss'$, or if both lengths are equal and $\ss$ is lexicographically $\prec$-smaller than~ $\ss'$, \emph{i.e.}, when both sequences are read starting from the left, the first entry in~$\ss$ that does not coincide with its counterpart in~$\ss'$ is $\prec$-smaller.

\begin{defi}
\label{D:SLn}
For $\nn \ge 2$, we recursively define a relation~$\SL\nn$
on~$\BKL\nn$ as follows:
\begin{deflist}
\item For $\br, \brbr$ in~$\BKL2$, we declare that $\br\SL2\brbr$ is true  
for $\br\!=\!\aa12^\brdi$ and $\brbr\!=\!\aa12^{\brdii}$ with~$\brdi<\brdii$;
\item For $\br, \brbr$ in~$\BKL\nn$ with $\nn\ge3$, we declare that $\br\SL\nn\brbr$ is true  if the~$\ff\nn$-splitting
of~$\br$ is smaller than the $\ff\nn$-splitting of~$\brbr$ for the
$\ShortLex$-extension of~$\SL\nno$.
\end{deflist}
\end{defi}

\begin{exam}
\label{X:OrdreRA}
As was seen in Example~\ref{X:GeneratorSplitting}, the $\nn$-breadth of $\aa\indi\indii$ with $\indii\le\nno$ is $1$ while the $\nn$-breadth of $\aa\indii\nn$ is $2$ for $\indi\not=1$ or $3$ for $\indi=1$.
An easy induction on~$\nn$ gives $\aa\indi\indii\SL\nn\aa\indiii\indiv$ whenever $\indii<\indiv\le\nn$ holds.
Then, one establishes
$$
1\SL\nn\aa12\SL\nn\aa23\SL\nn\aa13\SL\nn\aa34\SL\nn\aa24\SL\nn\aa14\SL\nn...\SL\nn\aa\nno\nn\SL\nn...\SL\nn\aa1\nn.
$$
\end{exam}

We observe that, according to Lemma~\ref{X:OrdreA} and
Example~\ref{X:OrdreRA}, the relations $<$ and $\SL\nn$ agree on the
generators of~$\BKL\nn$.

\begin{prop}
\label{P:PropOfSL}
For $\nn\ge 2$, the relation~$\SL\nn$ is a well-ordering on~$\BKL\nn$. For each braid~$\br$, the immediate
$\SL\nn$-successor of~$\br$ is $\br\, \aa12$, \ie, $\br\, \sig1$.
\end{prop}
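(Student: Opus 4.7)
The plan is induction on~$\nn$, driven by the uniqueness of the $\ff\nn$-splitting (Proposition~\ref{P:Splitting}) and generalities on $\ShortLex$-extensions. The base case $\nn=2$ is immediate: $\BKL2$ is the infinite cyclic monoid generated by $\aa12$, and $\SL2$ identifies with the usual order on $\mathbb{N}$, which is a well-ordering of type $\omega$ whose immediate successor function is $\kk\mapsto\kk+1$, \ie, $\aa12^\kk\mapsto\aa12^\kk\cdot\aa12$.

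For the inductive step, take $\nn\ge3$ and assume the proposition for $\nno$. By Proposition~\ref{P:Splitting}, the $\ff\nn$-splitting is an injection of $\BKL\nn$ into the finite sequences over $\BKL\nno$, and Definition~\ref{D:SLn} makes $\SL\nn$ the $\ShortLex$-extension of $\SL\nno$ along this injection. Linearity of $\SL\nn$ is thus inherited from linearity of $\SL\nno$. For well-foundedness, an infinite $\SL\nn$-descending chain would have weakly decreasing breadths, hence eventually constant breadth~$\brdi$; past that threshold the chain becomes a lex-descending chain of sequences of length $\brdi$ in $\BKL\nno$, and a standard telescoping argument (leftmost entries form a weakly $\SL\nno$-decreasing sequence, so stabilize by the induction hypothesis; then the next entries stabilize; and so on) contradicts strict descent.

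The successor claim reduces to a single explicit computation of $\ff\nn$-splittings. Writing the $\ff\nn$-splitting of $\br$ as $(\br_\brdi,\Ldots,\br_1)$, with the convention that this sequence is empty when $\br=1$, I will show that $(\br_\brdi,\Ldots,\br_2,\br_1\cdot\aa12)$---or simply $(\aa12)$ when $\br=1$---is the $\ff\nn$-splitting of $\br\cdot\aa12$. Indeed, $\aa12$ belongs to $\BKL\nno$ for $\nn\ge3$, so $\br_1\cdot\aa12$ also lies in $\BKL\nno$; decomposition~\eqref{E:SplittingDecomposition} is obviously preserved; condition~\eqref{E:SplittingCondition} concerns only the prefixes $\ff\nn^{\brdi-\kk}(\br_\brdi)\cdots\ff\nn(\br_\kkp)$ for $\kk\ge1$ and thus never involves the modified last entry; and the constraint $\br_\brdi\ne1$ is unchanged. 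Uniqueness in Proposition~\ref{P:Splitting} pins this sequence down as the $\ff\nn$-splitting of $\br\cdot\aa12$. The $\ShortLex$ comparison then yields $\br\SL\nn\br\cdot\aa12$ (equal breadths, agreement in positions~$\ge2$, and $\br_1\SL\nno\br_1\cdot\aa12$ by the induction hypothesis); conversely, if $\br\SL\nn\brbr$ with witness index~$\tt$, then either $\tt\ge2$, in which case the same witness yields $\br\cdot\aa12\SL\nn\brbr$, or $\tt=1$ and the induction hypothesis on successors forces $\br_1\cdot\aa12\SLe\nno\brbr_1$, hence $\br\cdot\aa12\SLe\nn\brbr$.

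The only real obstacle is the successor computation itself: once one notices that condition~\eqref{E:SplittingCondition} constrains only factors strictly to the left of $\br_1$, so that multiplying the rightmost entry by an element of $\BKL\nno$ cannot invalidate a splitting, the whole proposition collapses into routine $\ShortLex$ bookkeeping carried by the induction hypothesis.
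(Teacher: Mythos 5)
Your proof is correct and follows essentially the same route as the paper's: induction on $\nn$, with the well-ordering part coming from the general fact that the $\ShortLex$-extension of a well-ordering is a well-ordering (you unfold the telescoping argument where the paper simply cites a reference), and the successor part resting on the observation---also the paper's key point---that if $(\br_\brdi,\Ldots,\br_1)$ is the $\ff\nn$-splitting of $\br$ then $(\br_\brdi,\Ldots,\br_1\aa12)$ is the $\ff\nn$-splitting of $\br\,\aa12$, since condition~\eqref{E:SplittingCondition} only constrains the entries $\br_\kk$ with $\kk\ge2$. You supply more of the routine $\ShortLex$ bookkeeping than the paper does, but the ideas are identical.
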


\begin{proof}
The ordered monoid $(\BKL2,\SL2)$ is isomorphic to $\mathbb{N}$ with the usual ordering, which is a well-ordering.
As the $\ShortLex$-extension of a well-ordering is itself a 
well-ordering---see \cite{Levy}---we inductively deduce that
$\SL\nn$ is a well-ordering. 

The result about successors immediately follows from the fact that,
if the $\ff\nn$-splitting of~$\br$ is
$(\br_\pp,\Ldots,\br_1)$, then the $\ff\nn$-splitting of $\br\aa12$ is
$(\br_\pp,\Ldots,\br_1\aa12)$.
\end{proof}

The connection between the ordering~$\SL\nno$ and the restriction 
of~$\SL\nn$ to~$\BKL\nno$ is simple:  $\BKL\nno$ is an initial
segment of~$\BKL\nn$.

\begin{prop}
\label{P:Sep2}
For $\nn \ge 3$, the monoid~$\BKL\nno$ is the initial segment of~$(\BKL\nn, \SL\nn)$ determined by~$\aa\nno\nn$, \ie, we have
$\BKL\nno=\{\br\in\BKL\nn\,\mid\,\br\SL\nn\aa\nno\nn\}$. Moreover the braid $\aa\nno\nn$ is the smallest of $\nn$-breadth $2$.
\end{prop}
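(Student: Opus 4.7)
I would prove the proposition by induction on $\nn\ge 3$, exploiting the $\ShortLex$ definition of $\SL\nn$ and the $\ff\nn$-splittings computed in Example~\ref{X:GeneratorSplitting}. The base case $\nn=3$ is direct: $\BKL2=\{\aa12^\kk\}$ with $\SL2$ the natural order on exponents, so $\{\br\in\BKL2:\br\SL2\aa12\}=\{1\}=\BKL1$, and Example~\ref{X:GeneratorSplitting} gives $\aa23$ the $\ff3$-splitting $(\aa12,1)$, hence $3$-breadth~$2$.

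For the inductive step (assuming the result for $\nno$), Example~\ref{X:GeneratorSplitting} gives $\aa\nno\nn$ the $\ff\nn$-splitting $(\aa\nnt\nno,1)$, so it has $\nn$-breadth~$2$. Any $\br\in\BKL\nno$ has $\ff\nn$-splitting of length at most~$1$, so the length-comparison in $\ShortLex$ immediately yields $\br\SL\nn\aa\nno\nn$, proving the inclusion $\BKL\nno\subseteq\{\br:\br\SL\nn\aa\nno\nn\}$. For the reverse direction, I take $\br\in\BKL\nn\setminus\BKL\nno$ with $\ff\nn$-splitting $(\br_\brdi,\ldots,\br_1)$ of length $\brdi\ge2$ and aim for $\aa\nno\nn\SLe\nn\br$. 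The case $\brdi\ge 3$ is again handled by the length comparison, reducing to $\brdi=2$, where $\ShortLex$ further reduces the question to proving $\aa\nnt\nno\SLe\nno\br_2$ (the tail comparison $1\SLe\nno\br_1$ being automatic, as $1$ is the minimum of $\SL\nno$ by Proposition~\ref{P:PropOfSL}). By the inductive hypothesis, $\br_2\SL\nno\aa\nnt\nno$ is equivalent to $\br_2\in\BKL\nnt$.

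The main obstacle is to rule out $\br_2\in\BKL\nnt\setminus\{1\}$, and this is where the splitting condition~\eqref{E:SplittingCondition} is used decisively. If $\br_2$ were a nontrivial element of $\BKL\nnt$, it would be a nontrivial product of generators $\aa\indi\indii$ with $\indii\le\nnt$. Applying the formula $\ff\nn(\aa\indi\indii)=\aa\indip\indiip$ (valid for $\indii\le\nno$), the braid $\ff\nn(\br_2)$ would be a nontrivial product of generators $\aa\indip\indiip$ with $\indiip\le\nno$; hence $\ff\nn(\br_2)\in\BKL\nno\setminus\{1\}$, and therefore $\ff\nn(\br_2)$ would coincide with its own $\BKL\nno$-tail, contradicting the defining condition that this tail is trivial in the splitting $(\br_2,\br_1)$ of~$\br$. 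This contradiction yields $\aa\nnt\nno\SLe\nno\br_2$, with equality possible only when $\br_2=\aa\nnt\nno$ and subsequently $\br_1=1$, that is, $\br=\aa\nno\nn$. Both assertions of the proposition follow at once.
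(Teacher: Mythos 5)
Your proof is correct and follows essentially the same route as the paper's: the forward inclusion via the $\nn$-breadth count and the reverse inclusion by induction on~$\nn$, with the key step being that $\br_2\in\BKL\nnt$ forces $\ff\nn(\br_2)\in\BKL\nno$, contradicting condition~\eqref{E:SplittingCondition}. The only cosmetic difference is that the paper handles $\nn=3$ explicitly while you fold it into the inductive step via a trivial $\nn=2$ statement; do note at the end that $\br_2\neq1$ is needed (and follows from the requirement $\br_\brdi\neq1$ in the definition of the $\ff\nn$-splitting) to convert ``$\br_2\notin\BKL\nnt\setminus\{1\}$'' into ``$\aa\nnt\nno\SLe\nno\br_2$''.
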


\begin{proof}
First, by construction, every braid~$\br$ of~$\BKL\nno$ has
$\nn$-breadth $1$, whereas, by~\eqref{E:GeneratorSplitting},  the $\nn$-breadth
of~$\aa\nno\nn$ is~$2$. So, by definition, $\br \SL\nn\aa\nno\nn$
holds.

Conversely, assume that $\br$ is a braid of~$\BKL\nn$ that
satisfies $\br \SL\nn \aa\nno\nn$. As the $\nn$-breadth of~$\aa\nno\nn$ is~$2$, the hypothesis
$\br \SL\nn \aa\nno\nn$ implies that the $\nn$-breadth of~$\br$ is
at most~$2$. We shall prove, using induction on~$\nn$, that $\br$ has
$\nn$-breadth at most~$1$, which, by construction, implies that
$\br$ belongs to~$\BKL\nno$.

Assume first $\nn=3$. By definition, every $\ff3$-splitting
of length $2$ has the form $(\aa12^\brdi,\aa12^\brdii)$ with
$\brdi\neq0$. The $\ShortLex$-least such sequence is~$(\aa12,
1)$, which turns out to be the $\ff3$-splitting of~$\aa23$. Hence
$\aa23$ is the $\SL3$-smallest element of~$\BKL3$ with
$3$-breadth equal to~$2$, and $\br \SL3 \aa23$ implies $\br \in
\BKL2$.

Assume now $\nn>3$. Assume for a contradiction that the $\nn$-breadth of~$\br$ is~$2$. Let $(\br_2, \br_1)$ be the
$\ff\nn$-splitting of~$\br$. As the $\ff\nn$-splitting of~$\aa\nno\nn$ is $(\aa\nnt\nno,1)$, and $\br_1 \SL\nno 1$ is
impossible, the hypothesis $\br \SL\nn \aa\nno\nn$ implies $\br_2 \SL\nno \aa\nnt\nno$. By induction hypothesis, this implies that
$\br_2$ lies in~$\BKL\nnt$, hence $\ff\nn(\br_2)$ lies in~$\BKL\nno$. This contradicts 
Condition~\eqref{E:SplittingCondition}: a sequence $(\br_2,
\br_1)$ with $\br_2 \SL\nno \aa\nnt\nno$ cannot be the $\ff\nn$-splitting of a braid
of~$\BKL\nn$. So the hypothesis that
$\br$ has $\nn$-breadth~$2$ is contradictory, and $\br$ necessarily
lies in~$\BKL\nno$.
\end{proof}

Building on the compatibility result of Proposition~\ref{P:Sep2}, we hereafter drop
the subscript in~$\SL\nn$ and simply write~$\SLL$. Note that $\SLL$ is
actually a linear  order (and even a well-ordering) on~$\BKL\infty$, the inductive
limit of the monoids~$\BKL\nn$ with respect to the canonical embedding of~$\BKL\nno$
into~$\BKL\nn$.

\subsection{Separators}
\label{SS:SeparatorBraids}

By definition of $\SLL$, for $\brdi < \brdii$, every braid in~$\BKL\nn$ that has $\nn$-breadth~$\brdi$ is $\SLL$-smaller than every braid that has 
$\nn$-breadth~$\brdii$. As the ordering~$\SLL$ is a well-ordering, there must exist, for each~$\brdi$, a $\SLL$-smallest braid with
$\nn$-breadth~$\brdi$. These braids, which play the role of separators
for~$\SLL$, are easily identified. They will play an important role in the sequel. 

%Clearly, the least upper bound of the braid(s) with $\nn$-breadth~$0$ is~$\aa12$, \ie,$\sig1$.
Proposition~\ref{P:Sep2} says that the least upper bound of the braids with
$\nn$-breadth~$1$ is $\aa\nno\nn$. From $\nn$-breadth~$2$, a
periodic pattern appears.

\begin{defi}
\label{D:Sep}
For $\nn\ge3$ and $\brdi\ge1$, we put 
$\sep\nn\brdi{=}\ff\nn^{\brdip}(\aa\nnt\nno)\cdot...\cdot\ff\nn^2(\aa\nnt\nno)$.
\end{defi}

For instance, we find $\sep64=\ff6^5(\aa45)\cdot\ff6^4(\aa45)\cdot\ff6^3(\aa45)
\cdot\ff6^2(\aa45)$, whence 
\linebreak
$\sep64=  \aa34\,\aa23\,\aa12\,\aa16$,
and, similarly, 
$\sep53=\aa23\,\aa12\,\aa15$.

\begin{prop}
\label{P:Sep}
For all~$\nn \ge 3$ and $\brdi\ge 1$,

$(i)$ the $\ff\nn$-splitting of~$\sep\nn\brdi$ is the length~$\brdipp$ sequence
$(\aa\nnt\nno,...,\aa\nnt\nno,1,1)$;

$(ii)$ we have $\sep\nn\brdi=\ddd\nn^\brdi\ddd\nno^{-\brdi}$;

$(iii)$ the braid~$\sep\nn\brdi$ is the $\SLL$-smallest braid in~$\BKL\nn$ that has $\nn$-breadth~$\brdipp$---hence it is the least upper
bound of all braids of $\nn$-breadth~$\le \brdip$.
\end{prop}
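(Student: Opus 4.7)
The plan is to establish the three assertions in the order (ii), (i), (iii): part (ii) is a short algebraic induction, part (i) then uses (ii) to verify a candidate $\ff\nn$-splitting, and part (iii) follows from (i) by a lex-minimality argument. For (ii) I would induct on $\brdi\ge 1$. The base case $\brdi=1$ unfolds via~\eqref{E:GarsideAutoOnLetter} and~\eqref{E:AConjugateD} into the common value $\aa1\nn$. For the inductive step, I rewrite $\sep\nn{\brdi+1}=\ff\nn^{\brdi+2}(\aa\nnt\nno)\cdot\sep\nn\brdi$, express the first factor as $\ddd\nn^{\brdi+1}\,\sig\nno\,\ddd\nn^{-(\brdi+1)}$ using $\ff\nn^\kk(\br)=\ddd\nn^\kk\br\ddd\nn^{-\kk}$ together with $\ff\nn(\aa\nnt\nno)=\sig\nno$, apply the IH, and then collapse the product via the key identity $\sig\nno\,\ddd\nn\inv=\ddd\nno\inv$ (a rearrangement of $\ddd\nn=\ddd\nno\,\sig\nno$) to reach $\ddd\nn^{\brdi+1}\ddd\nno^{-(\brdi+1)}$.

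For (i), I again induct on $\brdi$, with base case $\brdi=1$ coming from Example~\ref{X:GeneratorSplitting}. Plugging the IH splitting of $\sep\nn\brdi$ into $\sep\nn{\brdi+1}=\ff\nn^{\brdi+2}(\aa\nnt\nno)\cdot\sep\nn\brdi$ produces the candidate sequence $(\aa\nnt\nno,\Ldots,\aa\nnt\nno,1,1)$ of length $\brdi+3$, containing $\brdi+1$ copies of $\aa\nnt\nno$. By the uniqueness assertion in Proposition~\ref{P:Splitting}, it suffices to verify the tail conditions; a one-step index shift identifies each new tail condition at $\kk\ge 3$ with the IH condition at $\kk-1\ge 2$, so the only fresh conditions are those at $\kk=1$ and $\kk=2$, asserting respectively that the $\BKL\nno$-tails of $\sep\nn{\brdi+1}$ and of $\sep\nn\brdi\cdot\sig\nno$ are trivial. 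I expect this triviality to be the main technical hurdle. The strategy is to exploit the formula from (ii): any factorisation $\sep\nn{\brdi+1}=\mu\eta$ with $\mu\in\BKL\nn$ and $\eta\in\BKL\nno\setminus\{1\}$ would imply, after right-multiplying by $\ddd\nno^{\brdi+1}$, that $\ddd\nn^{\brdi+1}=\mu\cdot(\eta\,\ddd\nno^{\brdi+1})$ admits a strictly longer $\BKL\nno$-right-factor than $\ddd\nno^{\brdi+1}$, which I would rule out by a direct analysis of the Garside left-normal form of $\ddd\nn^{\brdi+1}$ (or by invoking the tail lemmas proven in~\cite{F:SR}). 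The $\kk=2$ case is handled analogously, using $\sig\nno=\ddd\nno\inv\ddd\nn$ to absorb the extra factor.

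Given (i), part (iii) follows by a lex-minimality argument on valid length-$(\brdi+2)$ splittings $(\gamma_{\brdi+2},\Ldots,\gamma_1)$. Lemma~\ref{L:LastLetter}(ii) gives $\gamma_j\neq 1$ for $j\ge 3$. If one had $\gamma_j\SL\nno\aa\nnt\nno$ for some such $j$, Proposition~\ref{P:Sep2} applied inside $\BKL\nno$ would force $\gamma_j\in\BKL\nnt$; since \eqref{E:GarsideAutoOnLetter} entails $\ff\nn(\BKL\nnt)\subseteq\BKL\nno$, the non-trivial element $\ff\nn(\gamma_j)$ would be a right-divisor in $\BKL\nno$ of $\ff\nn^{\brdi+3-j}(\gamma_{\brdi+2})\cdots\ff\nn(\gamma_j)$, contradicting the tail condition at $\kk=j-1$. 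Hence $\aa\nnt\nno\SLe\nno\gamma_j$ for every $j\ge 3$, so the lex-smallest splitting of length $\brdi+2$ is $(\aa\nnt\nno,\Ldots,\aa\nnt\nno,1,1)$, which by~(i) is precisely the splitting of $\sep\nn\brdi$.
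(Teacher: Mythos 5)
Your part~(ii) is exactly the paper's argument (induction on~$\brdi$ via $\eqref{E:PushDdd}$ and the collapse $\aa\nno\nn\,\ddd\nn\inv=\ddd\nno\inv$), and your part~(iii) is essentially the paper's argument as well: both run the tail conditions against Proposition~\ref{P:Sep2} applied one level down to force every entry $\gamma_j$ ($j\ge3$) to be $\SLLe$-bounded below by $\aa\nnt\nno$; you phrase it as lex-minimality over all valid length-$\brdipp$ splittings while the paper shows that any $\br\SLLe\sep\nn\brdi$ of that breadth must coincide with $\sep\nn\brdi$ entry by entry, but these are the same computation.

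Part~(i) is where you diverge, and where there is a real gap. The paper's proof of~(i) is not an induction at all: it observes that none of the relations \eqref{E:DualCommutativeRelation}--\eqref{E:DualNonCommutativeRelation} applies to the length-two subword $\aa\nno\nn\,\aa\nnt\nno$ (the intervals $[\nn{\minus}1,\nn]$ and $[\nn{\minus}2,\nn{\minus}1]$ are neither disjoint nor nested, and the triangular relations do not match either), so the word $\ff\nn^\brdio(\aa\nnt\nno)\cdots\aa\nnt\nno$ is \emph{rigid}---alone in its equivalence class. Applying the alphabetical automorphism $\ff\nn^2$ preserves rigidity, so $\ff\nn^{\brdip}(\aa\nnt\nno)\cdots\ff\nn^2(\aa\nnt\nno)$ is the unique word representing $\sep\nn\brdi$ and hence must equal its rotating normal form; the splitting is then read off directly, with no appeal to~(ii). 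Your inductive route is sound in outline---the index-shift that reduces the tail conditions at $\kk\ge3$ to the induction hypothesis is correct, and you correctly isolate the two fresh conditions, namely that the $\BKL\nno$-tails of $\sep\nn{\brdi+1}$ and of $\sep\nn\brdi\,\sig\nno$ are trivial---but you do not actually discharge them, and they are the entire mathematical content of~(i). For $\kk=1$ your strategy (right-multiply by $\ddd\nno^{\brdi+1}$ and argue that $\ddd\nno^{\brdi+1}$ is the maximal $\BKL\nno$-right-divisor of $\ddd\nn^{\brdi+1}$) is plausible but rests on the parabolic-submonoid fact that the $\BKL\nno$-Garside length of an element of $\BKL\nno$ is the same as its $\BKL\nn$-Garside length, which you neither cite nor prove. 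For $\kk=2$ the suggested fix ``absorb $\sig\nno$ via $\sig\nno=\ddd\nno\inv\ddd\nn$'' does not yield an analogous reduction: one gets $\sep\nn\brdi\,\sig\nno=\ddd\nn^\brdi\,\ddd\nno^{\minus\brdi-1}\,\ddd\nn$, and no right multiplication by a power of $\ddd\nno$ turns this into a power of $\ddd\nn$; the needed cofactor $\ddd\nn\inv\ddd\nno^{\brdi+1}$ is not even in $\BKL\nn$. Moreover, $\BKL\nno$-tails of products $\br\,\sig\nno$ are not simply read off from $\br$: for instance $\aa\nnt\nno\,\aa\nno\nn$ rewrites as $\aa\nnt\nn\,\aa\nnt\nno$, so a generator in $\BKL\nno$ can emerge on the right after applying the monoid relations. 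So the $\kk=2$ condition needs an independent argument that your sketch does not supply; the paper's rigidity argument handles both conditions, and all of~(i), in one stroke.
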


\begin{proof}
$(i)$ First, we observe that there exists no relation $\aa\nno\nn\,\aa\nnt\nno=...$
in the presentation of the monoid~$\BKL\nn$. Then the word $\aa\nno\nn\aa\nnt\nno$,
which is equal to $\ff\nn(\aa\nnt\nno)\,\aa\nnt\nno$, is alone in its
equivalence class under the relation of~$\BKL\nn$. Similarly, the word
$\ff\nn^{\brdio}(\aa\nnt\nno)\cdot...\cdot\aa\nnt\nno$, called $\ww$, is alone in its
equivalence class, as no relation of the presentation may be applied to any
length~$2$ subword of this word. As $\ff\nn$ is an isomorphism, the same result holds
for the word $\ff\nn^2(\ww)$, which represents the braid
$\sep\nn\brdi$. As the braid $\sep\nn\brdi$ is
represented by a normal word, we deduce that $\ff\nn^2(\ww)$ is the
normal word representing~$\sep\nn\brdi$, \ie, it is its normal form.

$(ii)$ We use an induction on $\brdi$.
Relation~\eqref{E:AConjugateD} implies $\aa1\nn=\ddd\nn\ddd\nno\inv$.
Using~\eqref{E:GarsideAutoOnLetter}, we deduce
\[
\sep\nn1=\ff\nn^2(\aa\nnt\nno)=\aa1\nn=\ddd\nn\,\ddd\nno\inv.
\]
Assume now $\brdi\ge 2$.
By definition, we have $\sep\nn\brdi=\ff\nn^\brdip(\aa\nnt\nno)\,\sep\nn\brdio$.
Then, using the induction hypothesis we have 
\[
\sep\nn\brdi=\ff\nn^\brdip(\aa\nnt\nno)\,\ddd\nn^\brdio\,\ddd\nno^{-\brdi+1}.
\]
Pushing $\ddd\nn^\brdio$ to the left using \eqref{E:PushDdd}, we find:
\[
 \sep\nn\brdi=\ddd\nn^\brdio\,\ff\nn^2(\aa\nnt\nno)\,
\ddd\nno^{-\brdi+1}.
\]
Relation~\eqref{E:GarsideAutoOnLetter} implies $\ff\nn^2(\aa\nnt\nno) = \aa1\nn$.
So, using~\eqref{E:AConjugateD}, we finally obtain
\[
\sep\nn\brdi=\ddd\nn^\brdio\,\ddd\nn\,\ddd\nno\inv\,\ddd\nno^{-\brdi+1}=
\ddd\nn^\brdi\,\ddd\nno^{-\brdi}.
\]

$(iii)$ Let $(\br_\brdipp,...,\br_1)$ be the $\ff\nn$-splitting of a braid~$\br$
that lies in~$\BKL\nn$ and satisfies $\br\SLLe\sep\nn\brdi$. By definition of
$\SLL$, we have $\br_\brdipp\SLLe\aa\nnt\nno$. By
Lemma~\ref{L:LastLetter}$(i)$ and~$(ii)$, the $\BKL\nnt$-tail of~$\br_\brdipp$ is trivial,
hence its $\nno$-breadth is at least~$2$.  Then Proposition~\ref{P:Sep2} implies
$\br_\brdipp = \aa\nnt\nno$. By definition of $\SLL$ again, we have
$\br_\brdip\SLLe\aa\nnt\nno$. Using the previous argument repeatedly, we
obtain $\br_\kk=\aa\nnt\nno$ for
$\kk\ge3$. Using the argument once more gives $\br_2\SLLe1$,
which implies $\br_2=1$. Finally, by definition of $\SLL$, we have $\br_1=1$.
We conclude by $(i)$ and the uniqueness of the $\ff\nn$-splitting.
\end{proof}

Owing to Proposition~\ref{P:Sep}, it is coherent to extend Definition~\ref{D:Sep} by $\sep\nn0 =\nobreak \aa\nno\nn$. In this way, the result of
Proposition~\ref{P:Sep}$(iii)$ extends to the case~$\brdi=0$.

\setlength{\unitlength}{1pt}
\begin{figure}[htb]
\begin{center}
\begin{picture}(301,56)
\put(0,0){\includegraphics[scale=0.6]{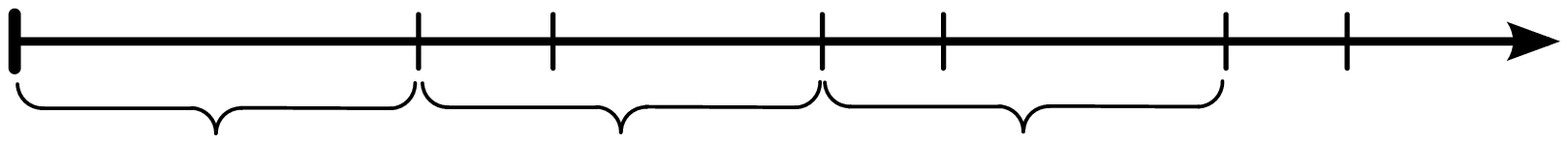}}
\put(12,45){$1$}
\put(80,45){$\sep\nn1$}
\put(108,45){$\ddd\nn$}
\put(152,45){$\sep\nn2$}
\put(179,45){$\ddd\nn^2$}
\put(224,45){$\sep\nn3$}
\put(251,45){$\ddd\nn^3$}
\put(22,8){\footnotesize $\nn$-breadth $\le2$}
\put(103,8){\footnotesize  $\nn$-breadth $3$}
\put(176,8){\footnotesize $\nn$-breadth $4$}
\end{picture}
\end{center}
\caption{\sf The braid $\sep\nn\rr$ as a separator in $(\BKL\nn,\SLL)$---hence in
$(\BKL\nn,<)$ as well once Theorem~\ref{T:coinOrdre} is proved.}
\label{F:Sep}
\end{figure}

We now observe that the orderings $<$ and $\SLL$ agree on the
separators~$\sep\nn\brdi$.

\begin{lemm}
\label{L:OrderSep}
Assume $\nn\ge3$. Then $0\le\brdi<\brdii$ implies $\sep\nn\brdi<\sep\nn\brdii$.
\end{lemm}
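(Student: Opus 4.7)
The plan is to exploit the compact formula $\sep\nn\brdi = \ddd\nn^\brdi\,\ddd\nno^{-\brdi}$ supplied by Proposition~\ref{P:Sep}$(ii)$, and to reduce the claim to a direct word-level check. By transitivity of~$<$ (Lemma~\ref{L:Transitive}), it suffices to prove the one-step inequality $\sep\nn\brdi < \sep\nn{\brdi+1}$ for every $\brdi \ge 0$; chaining then yields the desired inequality whenever $\brdi < \brdii$.

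For $\brdi \ge 1$, I would substitute the product formula and simplify, obtaining
\[
\sep\nn\brdi\inv\,\sep\nn{\brdi+1} \;=\; \ddd\nno^{\brdi}\,\ddd\nn^{-\brdi}\,\ddd\nn^{\brdi+1}\,\ddd\nno^{-\brdi-1} \;=\; \ddd\nno^{\brdi}\,\ddd\nn\,\ddd\nno^{-\brdi-1}.
\]
Expanding this expression into a braid word via $\ddd\nn = \sig1\,\sig2\cdots\sig\nno$ and $\ddd\nno = \sig1\,\sig2\cdots\sig\nnt$, one sees that exactly one letter $\sig\nno$ appears---contributed by the central factor~$\ddd\nn$---and that this occurrence has positive exponent; every other letter has the form $\sig\ii^{\pm 1}$ with $\ii \le \nnt$. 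By Definition~\ref{D:Order}, the representing word is $\sig\nno$-positive, so the quotient braid is $\sigg$-positive, and $\sep\nn\brdi < \sep\nn{\brdi+1}$ holds by definition of~$<$.

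The case $\brdi = 0$ must be handled separately, since $\sep\nn 0 = \aa\nno\nn$ is defined by convention rather than by the general formula of Proposition~\ref{P:Sep}$(ii)$. Using Relation~\eqref{E:AConjugateD} (with $\indi = 1$, $\indii = \nn$), one obtains $\sep\nn 1 = \ddd\nn\,\ddd\nno\inv = \aa1\nn$, and Lemma~\ref{X:OrdreA} then directly yields $\aa\nno\nn < \aa1\nn$, \ie, $\sep\nn 0 < \sep\nn 1$.

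I do not anticipate any genuine obstacle: the whole argument collapses to a one-line word-level inspection once the compact product formula of part~$(ii)$ is invoked. The only small care is the separate treatment of $\brdi = 0$, dictated by the fact that $\sep\nn 0$ is a conventional extension of Definition~\ref{D:Sep}. Conceptually, the lemma records that the separator braids $\sep\nn\brdi$, which serve as $\SLL$-separators by Proposition~\ref{P:Sep}$(iii)$, are also monotone with respect to the Dehornoy ordering~$<$---a compatibility that will be needed in the later sections when proving that $\SLL$ and~$<$ coincide on~$\BKL\nn$.
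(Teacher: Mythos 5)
Your proof is correct and follows essentially the same line as the paper for $\brdi\ge1$, though you reduce to the consecutive case $\brdi\to\brdi\plus1$ first and then chain by transitivity, whereas the paper handles arbitrary $\brdi<\brdii$ in one shot by computing $\sep\nn\brdi\inv\sep\nn\brdii=\ddd\nno^\brdi\,\ddd\nn^{\brdii-\brdi}\,\ddd\nno^{-\brdii}$ directly. The word-level inspection (one positive $\sig\nno$ from the central $\ddd\nn$, all other letters of index $\le\nnt$) is the same idea in both.

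Your treatment of $\brdi=0$ is the genuinely different part. You observe $\sep\nn1=\ddd\nn\,\ddd\nno\inv=\aa1\nn$ via~\eqref{E:AConjugateD} and then quote Lemma~\ref{X:OrdreA}, whereas the paper inserts $\ddd\nn\,\ddd\nn\inv$ on the left and simplifies $\ddd\nn\inv\aa\nno\nn\inv\ddd\nn=\aa\nnt\nno\inv$ to exhibit an explicitly $\sig\nno$-positive word $\ddd\nn\,\aa\nnt\nno\inv\,\ddd\nno\inv$ from scratch. Your route is shorter, and it is legitimate here: although the closing line of Lemma~\ref{X:OrdreA}'s proof invokes linearity of $<$, that is only used for the converse implications; the forward inequality $\aa\nno\nn<\aa1\nn$ is established there by constructing a $\sig\nno$-positive witness, which is exactly the discipline the paper imposes on itself (transitivity and left-invariance from Lemma~\ref{L:Transitive}, plus $\sigg$-positive witnesses). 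So nothing illicit is smuggled in. What the paper's self-contained computation buys is independence from Lemma~\ref{X:OrdreA}; what your version buys is brevity and the nice identification $\sep\nn1=\aa1\nn$.
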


\begin{proof}
Assume $0<\brdi<\brdii$.
By Proposition~\ref{P:Sep}$(ii)$, we have 
\[
\sep\nn\brdi\inv\cdot\sep\nn{\brdii}=\ddd\nno^\brdi\,\ddd\nn^{-\brdi}\cdot\ddd\nn^{\brdii}\,\ddd\nno^{-\brdii}=\ddd\nno^\brdi\,\ddd\nn^{\brdii-\brdi}\,\ddd\nno^{-\brdii}.
\]
The hypothesis $\brdii-\brdi>0$ implies that 
$\sep\nn\brdi\inv\,\sep\nn\brdii$ is a
$\sig\nno$-positive braid, since the braid $\ddd\kk$ is $\sig\kko$-positive. Hence
we have~$\sep\nn\brdi<\sep\nn\brdii$.

It remains to establish the result for $\brdi=0$.
The previous case implies $\sep\nn1\le\sep\nn\brdii$.
As the relation~$<$ is transitive (Lemma~\ref{L:Transitive}), it is
enough to prove $\sep\nn0<\sep\nn1$. Using Proposition~\ref{P:Sep}$(ii)$ and inserting
$\ddd\nn\ddd\nn\inv$ on the left, we obtain
\[
\sep\nn0\inv\,\sep\nn1=\aa\nno\nn\inv\,\ddd\nn\,\ddd\nno\inv=\ddd\nn\,\ddd\nn\inv\aa\nno\nn\inv\ddd\nn\,\ddd\nno\inv.
\]
Relation~\eqref{E:GarsideAutoOnLetter} implies
$\ddd\nn\inv\aa\nno\nn\inv\ddd\nn=\ff\nn\inv(\aa\nno\nn\inv)=\aa\nnt\nno\inv$.
We deduce 
\[\sep\nn0\inv\,\sep\nn1 = \ddd\nn\,\aa\nnt\nno\inv\,\ddd\nno\inv,\]
and the latter decomposition is explicitly $\sig\nno$-positive.
\end{proof}

\subsection{The main result}
\label{SS:TheMainResult}

At this point, we have two \emph{a priori} unrelated linear orderings of the
monoid~$\BKL\nn$, namely the standard braid ordering~$<$, and the rotating
ordering~$\SLL$ of Definition~\ref{D:SLn}.  The main technical result of this paper is:

\begin{thrm}
\label{T:coinOrdre}
For all braids $\br, \brr$ in~$\BKL\nn$, the relation $\br\SLL\brr$ implies $\br < \brr$.
\end{thrm}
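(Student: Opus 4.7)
The plan is induction on $\nn$. The base case $\nn=2$ is immediate: if $\br=\aa12^\brdi$ and $\brr=\aa12^\brdii$ with $\brdi<\brdii$, then $\br\inv\brr=\sig1^{\brdii-\brdi}$ is visibly $\sig1$-positive, hence $\br<\brr$.

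For the inductive step, I fix $\nn\ge3$, assume the theorem over $\BKL\nno$, and take nontrivial $\br,\brr\in\BKL\nn$ with $\ff\nn$-splittings $(\br_\brdi,\Ldots,\br_1)$ and $(\brr_\brdii,\Ldots,\brr_1)$ such that $\br\SLL\brr$; the edge case $\br=1$ is handled directly by the observation that every nontrivial element of $\BKL\nn$ is $\sigg$-positive. By the $\ShortLex$ definition, either (i)~$\brdi<\brdii$, or (ii)~$\brdi=\brdii$ and some index $\tt$ satisfies $\br_\kk=\brr_\kk$ for $\kk>\tt$ together with $\br_\tt\SL\nno\brr_\tt$. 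In case~(ii), left-invariance of~$<$ (Lemma~\ref{L:Transitive}) lets me strip off the common left factor $\ff\nn^\brdio(\br_\brdi)\cdots\ff\nn^\tt(\br_\ttp)$, reducing the problem to comparing $\ff\nn^\tto(\br_\tt)\,\br_\tto\cdots\br_1$ with $\ff\nn^\tto(\brr_\tt)\,\brr_\tto\cdots\brr_1$. The induction hypothesis applied to $\br_\tt\SL\nno\brr_\tt$ yields $\br_\tt<\brr_\tt$ in $\BKL\nno$, i.e., $\br_\tt\inv\brr_\tt$ is $\sigg$-positive in generators $\sigpm\ii$ with $\ii\le\nnt$; the remaining work is to propagate this positivity through the conjugation by $\ff\nn^\tto$ and through the mismatched tails on the right.

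Case~(i) is handled by deploying the separator braids $\sep\nn\brdi$ from Section~\ref{SS:SeparatorBraids} as rulers. Proposition~\ref{P:Sep}$(iii)$ yields the $\SLL$-chain $\br\SLL\sep\nn\brdio\SLL\sep\nn\brdi\SLL\Ldots\SLL\sep\nn{\brdii-2}\SLLe\brr$, and Lemma~\ref{L:OrderSep} already upgrades each intermediate separator-versus-separator link to a strict inequality in the standard ordering~$<$. So by transitivity of~$<$ it suffices to establish the two endpoint inequalities $\br<\sep\nn\brdio$ and $\sep\nn{\brdii-2}\le\brr$ in the standard ordering, and this is exactly what the Key Lemma of Section~\ref{S:KeyLemma} is designed to supply: for any $\br\in\BKL\nn$ with appropriate bound on its $\nn$-breadth, the quotient $\br\inv\sep\nn\brdi$ should be trivial or $\sigg$-positive.

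The main obstacle throughout will be controlling cancellations between the inverted left factors---whose last letters have the form $\aa{..}\nno$ by Lemma~\ref{L:LastLetter}$(i)$---and the rotated right factors arising from $\ff\nn^\kk$, so that the maximal-index generators $\sigpm\nno$ appear only positively in the resulting braid word. Correspondingly, in case~(ii) the delicate subcase is when the tails $\br_\tto\cdots\br_1$ and $\brr_\tto\cdots\brr_1$ differ: the conjugate $\ff\nn^\tto(\br_\tt\inv\brr_\tt)$ then involves letters in a shifted alphabet, and the syntactic constraints of Lemma~\ref{L:LastLetter} are what should guarantee that these do not cancel the $\sigpm\nno$-positivity harvested from the Key Lemma.
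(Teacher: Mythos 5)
Your outer strategy---induction on $\nn$, with the separators $\sep\nn\tt$ as rulers between breadth classes---is the same as the paper's, and your case~(i) reduction to the endpoint inequalities is sound (those endpoints are Propositions~\ref{P:Upper} and~\ref{P:Lower}; only the latter invokes Proposition~\ref{P:KeyLemma}). However, two intertwined gaps make the proposal fall short of an actual proof, and both live in case~(ii).

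\textbf{The induction hypothesis is too weak.} You carry forward only the statement of the theorem, i.e.\ that $\br_\tt\SL\nno\brbr_\tt$ gives a $\sigg$-positive quotient $\br_\tt\inv\brbr_\tt$ in $\BKL\nno$. But to feed the Key Lemma you must know that this quotient is $\sigg$-positive \emph{of type} $\last{\brbr}_\tt$ in the sense of Definition~\ref{D:Dangerous}; that type hypothesis is exactly what Proposition~\ref{P:KeyLemma} requires of the leading entry it is handed. The paper therefore proves the stronger Proposition~\ref{P:MainBis}, whose ``moreover'' clause supplies precisely this type information whenever the $\BKL\nno$-tail of the larger braid is trivial---a condition that Lemma~\ref{L:LastLetter}$(i)$ guarantees for every inner entry $\brbr_\tt$ with $\tt\ge2$. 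Without strengthening the inductive statement this way, the induction cannot be closed.

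\textbf{Case~(ii) has no mechanism.} After cancelling the common left factor, you are left with $\br_1\inv\cdots\ff\nn^\tto(\br_\tt\inv)\cdot\ff\nn^\tto(\brbr_\tt)\cdot\ff\nn^\ttt(\brbr_\tto)\cdots\brbr_1$, where the two tails $(\br_\tto,\ldots,\br_1)$ and $(\brbr_\tto,\ldots,\brbr_1)$ may be completely unrelated and the middle quotient $\ff\nn^\tto(\br_\tt\inv\brbr_\tt)$ has been rotated into a mixed alphabet. Appealing to ``the syntactic constraints of Lemma~\ref{L:LastLetter}'' and ``positivity harvested from the Key Lemma'' does not constitute an argument; you need a concrete factorization. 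The paper's device is to insert $\sep\nn\ttt\cdot\sep\nn\ttt\inv$ at the seam between the two tails, splitting the quotient as $\brr\cdot\brbrr\cdot\brbr_1$, where $\brr=\br_1\inv\cdots\ff\nn^\ttt(\br_\tto\inv)\cdot\sep\nn\ttt$ is $\sigg$-positive of type $\aa1\nn$ by Proposition~\ref{P:Upper} (applied to the breadth-$\tto$ braid with splitting $(\br_\tto,\ldots,\br_1)$), and $\brbrr=\sep\nn\ttt\inv\cdot\ff\nn^\tto(\br_\tt\inv\brbr_\tt)\cdot\ff\nn^\ttt(\brbr_\tto)\cdots\ff\nn(\brbr_2)$ is trivial or $\sigg$-positive of controlled type by the Key Lemma. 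Only then do the composition rules of Lemma~\ref{L:TypeSigmaPos} give $\sigg$-positivity of the full product. This separator-insertion step, plus the type bookkeeping, is where essentially all of the work of Section~\ref{S:KeyLemma} is spent; it is the heart of the proof and is absent from your proposal.
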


Before starting the proof of this result, we list a few consequences.
First we obtain a new proof of Property~$\textbf{C}$.

\begin{coro} 
[Property~$\textbf{C}$]
Every non-trivial braid is $\sigg$-positive or $\sigg$-negative.
\end{coro}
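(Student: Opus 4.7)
The plan is to reduce Property~\textbf{C} to Theorem~\ref{T:coinOrdre} via the Garside-theoretic fact that every element of the braid group $\BB\nn$ can be written as a fraction of two elements of the dual braid monoid $\BKL\nn$.

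Given a non-trivial braid $\brbr$, I would first choose $\nn$ large enough so that $\brbr$ lies in $\BB\nn$. Since $\BKL\nn$ is a Garside monoid whose group of fractions is $\BB\nn$ (as recalled in Section~\ref{SS:DualBraidMonoids}), there exist $\br, \brr \in \BKL\nn$ with $\brbr = \br\inv\brr$; for instance, $\ddd\nn^\kk$ is a common right-multiple of all elements of $\BB\nn$ for $\kk$ large enough, so one may set $\brr = \br \cdot \brbr$ for a suitable $\br \in \BKL\nn$. Since $\brbr \ne 1$, we have $\br \ne \brr$.

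Next, Proposition~\ref{P:PropOfSL} tells us that $\SLL$ is a linear (indeed well-)ordering on $\BKL\nn$, so exactly one of $\br \SLL \brr$ or $\brr \SLL \br$ holds. In the first case, Theorem~\ref{T:coinOrdre} yields $\br < \brr$, which by Definition~\ref{D:Order} means that $\brbr = \br\inv\brr$ is $\sigg$-positive. In the second case, symmetrically $\brr < \br$, so $\brbr\inv = \brr\inv\br$ is $\sigg$-positive, and hence $\brbr$ is $\sigg$-negative. This exhausts the possibilities.

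There is no real obstacle once Theorem~\ref{T:coinOrdre} is available: the argument is a direct transfer of the trichotomy of $\SLL$ on $\BKL\nn$ into a trichotomy of the relation $<$ on $\BB\nn$, relying only on Garside fractions and Definition~\ref{D:Order}. The substantive content of Property~\textbf{C} is entirely absorbed into Theorem~\ref{T:coinOrdre} and the linearity of the rotating ordering.
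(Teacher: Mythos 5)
Your proposal is correct and follows essentially the same route as the paper: write the nontrivial braid as a fraction $\br\inv\brr$ with $\br,\brr\in\BKL\nn$, invoke the linearity of $\SLL$ (Proposition~\ref{P:PropOfSL}) to get $\br\SLL\brr$ or $\brr\SLL\br$, and then apply Theorem~\ref{T:coinOrdre} to conclude $\sigg$-positivity or $\sigg$-negativity. The extra remark on producing the fraction via a power of $\ddd\nn$ is a harmless elaboration of the Garside fact the paper cites implicitly.
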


\begin{proof}
Assume that $\br$ is a non-trivial braid of $\BB\nn$.
First, as $\BB\nn$ is a group of fractions for~$\BKL\nn$, there exist $\brr, \brrr$
in~$\BKL\nn$ satisfying $\br = \brr{}\inv\,\brrr$. As $\br$ is assumed to be nontrivial,
we have $\brr \not= \brrr$. As $\SLL$ is a strict linear ordering, one of $\brr
\SLL \brrr$ or $\brrr \SLL \brr$ holds.  In the first case, Theorem~\ref{T:coinOrdre} implies that $\brr{}\inv\,\brrr$, \ie, $\br$, is
$\sigg$-positive. In the second case, Theorem~\ref{T:coinOrdre} implies that $\brrr{}\inv\,\brr$ is 
$\sigg$-positive, hence $\br$ is $\sigg$-negative.
\end{proof}

\begin{coro}
\label{C:Coincide}
The relation~$\SLL$ coincide with the restriction of~$<$ to~$\BKL\nn$.
\end{coro}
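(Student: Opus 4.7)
The plan is to derive both directions from Theorem~\ref{T:coinOrdre} combined with the trichotomy provided by the linear ordering~$\SLL$ on~$\BKL\nn$ (Proposition~\ref{P:PropOfSL}). The forward direction $\br\SLL\brr\Rightarrow\br<\brr$ is exactly Theorem~\ref{T:coinOrdre}. For the converse, I would suppose $\br<\brr$ in~$\BKL\nn$ and exclude the other two alternatives of the $\SLL$-trichotomy, namely $\br=\brr$ and $\brr\SLL\br$.

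If $\br=\brr$, then $\br\inv\,\brr=1$ would have to be $\sigg$-positive, which contradicts Property~$\mathbf{A}$, since a $\sigg$-positive word contains at least one letter and therefore represents a nontrivial braid. If $\brr\SLL\br$, then Theorem~\ref{T:coinOrdre} yields $\brr<\br$, so both $\br\inv\,\brr$ and its inverse are $\sigg$-positive; equivalently, the braid $\gamma:=\br\inv\,\brr$ is simultaneously $\sigg$-positive and $\sigg$-negative. To derive a contradiction, I pick a $\sig\ii$-positive representative $\ww$ of~$\gamma$ with~$\ii$ maximal, and a $\sig\jj$-negative representative $\www$ of~$\gamma$ with~$\jj$ maximal; assuming without loss of generality $\ii\ge\jj$, the word $\ww\cdot\www\inv$ uses only letters $\sigpm\kk$ with $\kk\le\ii$, contains at least one $\sig\ii$, no $\sig\ii\inv$, and represents the trivial braid, yielding a $\sig\ii$-positive word for the identity, in contradiction with Property~$\mathbf{A}$.

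Hence $\br\SLL\brr$ is the sole remaining alternative, and the coincidence of~$\SLL$ with the restriction of~$<$ to~$\BKL\nn$ is established. The only mildly delicate point is the standard derivation of the antisymmetry of~$<$ from Property~$\mathbf{A}$ that appears in the $\brr\SLL\br$ case; once this is in hand, the rest is just trichotomy bookkeeping and poses no real obstacle.
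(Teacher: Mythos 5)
Your proof is correct and follows essentially the same approach as the paper: forward direction from Theorem~\ref{T:coinOrdre}, converse by $\SLL$-trichotomy combined with Property~$\mathbf{A}$. The one cosmetic difference is in the case $\brr\SLL\br$: the paper invokes the transitivity of~$<$ (Lemma~\ref{L:Transitive}) to reduce at once to a $\sigg$-positive representative of the trivial braid, whereas you re-derive the fact that no braid is simultaneously $\sigg$-positive and $\sigg$-negative via the standard concatenation argument; both routes rest on Property~$\mathbf{A}$ and are equally valid.
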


\begin{proof}
Let $\br, \brbr$ belong to~$\BKL\nn$. By Theorem~\ref{T:coinOrdre}, $\br\SLL\brbr$ implies $\br<\brbr$. Conversely, assume $\br
\not\SLL \brbr$. As $\SLL$ is a linear ordering, we have either $\brbr \SLL \br$, hence $\brbr < \br$, or $\br = \brbr$. In both cases,
Property~$\textbf{A}$ implies that $\br < \brbr$ fails. 
\end{proof}

Corollary~\ref{C:Coincide} directly implies Theorem~1 stated in the introduction.
Indeed, the characterization of the braid ordering given in Theorem~1 is nothing but
the recursive definition of the ordering~$\SLL$.

Finally, we obtain a new proof of Laver's result, together with a determination of the order type.

\begin{coro}
\label{C:OrderTypeDeh}
The restriction of the Dehornoy ordering to the dual braid mon\-oid~$\BKL\nn$ is a
well-ordering, and its order type is the ordinal $\omega^{\omega^\nnt}$.
\end{coro}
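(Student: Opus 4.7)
The plan is to combine Corollary~\ref{C:Coincide} with an ordinal sandwich. Since Corollary~\ref{C:Coincide} identifies the restriction of the Dehornoy ordering to~$\BKL\nn$ with the rotating ordering~$\SLL$, and Proposition~\ref{P:PropOfSL} states that $\SLL$ is a well-ordering, the well-ordering claim comes for free, giving in passing an elementary proof of Laver's theorem in this case. What remains is to identify the order type with $\omega^{\omega^\nnt}$.

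For the upper bound, I would induct on~$\nn$. The base case $\nn=2$ is immediate since $(\BKL2,\SLL)$ is isomorphic to $(\mathbb{N},<)$, of type $\omega=\omega^{\omega^0}$. Assuming by induction that $(\BKL\nno,\SLL)$ has type $\alpha_\nno=\omega^{\omega^{\nn-3}}$, Definition~\ref{D:SLn} (equivalently, Theorem~1) exhibits the $\ff\nn$-splitting map $\br\mapsto(\br_\brdi,\Ldots,\br_1)$ as an order-preserving injection of $(\BKL\nn,\SLL)$ into the $\ShortLex$-ordered set of finite sequences over $\BKL\nno$. A standard Cantor-normal-form computation (using $\omega^a + \omega^b = \omega^b$ for $a<b$) then gives
\[
  \mathrm{ord}(\BKL\nn,\SLL)\;\le\;\sum_{\kk\ge 0}(\alpha_\nno)^\kk \;=\; \sum_{\kk\ge 0}\omega^{\omega^{\nn-3}\cdot\kk}\;=\;\omega^{\omega^{\nn-3}\cdot\omega}\;=\;\omega^{\omega^\nnt}.
\]

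For the matching lower bound, I would invoke Burckel's theorem. The identities $\sig\indi=\aa\indi\indip$ noted in Section~\ref{SS:DualBraidMonoids} realize $\BP\nn$ as a submonoid of $\BKL\nn$, and both inherit the Dehornoy ordering, so the inclusion is an order-preserving injection of well-orders. Applying the $\le$-principle for such embeddings together with Burckel's value $\mathrm{ord}(\BP\nn,<)=\omega^{\omega^\nnt}$ quoted in Section~\ref{SS:BraidOrdering} yields $\mathrm{ord}(\BKL\nn,<)\ge\omega^{\omega^\nnt}$. Combining the two bounds gives the desired value.

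The main technical point is the upper bound, which rests on the ShortLex computation and the fact that the $\ff\nn$-splitting map is an order-preserving injection into the ShortLex structure—both of which follow directly from Definition~\ref{D:SLn} and Proposition~\ref{P:Splitting}. If instead one preferred to avoid invoking Burckel's theorem in order to keep the argument self-contained within the dual framework, the lower bound could be obtained by an induction on~$\brdi$ computing the $\SLL$-rank of each separator $\sep\nn\brdi$ from Proposition~\ref{P:Sep}; this would be the more delicate route, since condition~\eqref{E:SplittingCondition} forbids some length-$\brdipp$ sequences and one would have to check that enough valid $\ff\nn$-splittings remain to realize the full ordinal $(\alpha_\nno)^{\brdipp}$ at each breadth level.
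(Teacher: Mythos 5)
Your proof is correct and takes essentially the same route as the paper: well-ordering follows from Corollary~\ref{C:Coincide} together with Proposition~\ref{P:PropOfSL}, the upper bound $\omega^{\omega^\nnt}$ is obtained by iterating the $\ShortLex$-extension fact starting from $\omega$ at $\nn=2$, and the lower bound comes from Burckel's value $\omega^{\omega^\nnt}$ for $\BP\nn$ together with the inclusion $\BP\nn\subset\BKL\nn$. Your explicit computation $\sum_{\kk\ge 0}(\alpha_{\nno})^{\kk}=(\alpha_{\nno})^{\omega}$ merely unpacks the $\lambda^\omega$ identity the paper cites from L\'evy, and even your closing remark about a direct construction of an increasing sequence matches the paper's own parenthetical.
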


\begin{proof}
It is standard that, if $(X, <)$ is a well-ordering of ordinal type~$\lambda$, then the 
$\ShortLex$-extension of~$<$ to the set of all finite sequences of elements of~$X$ is a
well-ordering of ordinal type~$\lambda^\omega$---see \cite{Levy}. The ordinal type
of~$\SLL$ on~$\BKL2$ is $\omega$, the order type of the standard ordering of natural
numbers. So, an immediate induction shows that, for each~$\nn \ge 2$, the ordinal type
of~$\SLL$ on~$\BKL\nn$ is at most~$\omega^{\omega^\nnt}$. 

\emph{A priori}, this is only an upper bound, because it is not true that every
sequence of braids in~$\BKL\nno$ is the $\ff\nn$-splitting of a braid of~$\BKL\nn$.
However, by construction, the monoid~$\BKL\nn$ includes the positive braid
monoid~$\BP\nn$, and it was shown in~\cite{Burckel:WO}---or, alternatively,
in~\cite{Carlucci}---that the order type of the restriction of the braid ordering
to~$\BP\nn$ is
$\omega^{\omega^\nnt}$. Hence the ordinal type of its restriction to~$\BKL\nn$ is at
least that ordinal, and, finally, we have equality. (Alternatively, we could also directly
construct a type $\omega^{\omega^\nnt}$ increasing sequence in~$\BKL\nn$.)
\end{proof}

\begin{rema}
By construction, the ordering~$<$ is invariant under left-multiplica\-tion. 
Another consequence of Corollary~\ref{C:Coincide} is that the
ordering~$\SLL$ is invariant under left-multiplication as well. Note that the latter result is \emph{not} obvious at all
from the direct definition of that relation.
\end{rema}

\section{A Key Lemma}
\label{S:KeyLemma}

So, our goal is to prove that the rotating ordering of Definition~\ref{D:SLn} and the
standard braid ordering coincide. The result will follow from the fine properties of the
rotating normal form and of the $\ff\nn$-splitting. The aim of this section is to 
establish these properties. Most of them are improvements of properties
established in~\cite{F:SR}, and we shall heavily use the notions introduced in this
paper.

\subsection{Sigma-positive braid of type $\aa\indi\nn$.}
\label{SS:SigmaPositiveBraidOfType}

We shall prove Theorem~\ref{T:coinOrdre} by using an induction on the number of 
strands~$\nn$. Actually, in order to maintain an induction hypothesis, we shall prove a
stronger implication: instead of merely proving that, if $\br$ is $\SLL$-smaller
than~$\brbr$, then the quotient braid~$\br\inv
\brbr$ is $\sigg$-positive, we shall prove the more precise conclusion that $\br\inv
\brbr$ is
\emph{$\sigg$-positive of type
$\aa\indi\nn$} for some~$\indi$ related to the last letter in~$\brbr$.

\begin{defi} 
\label{D:Dangerous}
Assume $\nn\ge3$.
\begin{deflist}
 \item A braid is called \emph{$\aa\indi\nn$-dangerous} if it admits one
decomposition of the form
\[\dddd{\findi\dd}\nno\inv\ \dddd{\findi\ddo}\nno\inv\ \Ldots\ \dddd{\findi1}\nno\inv,\]
with $\findi\dd\ge\findi\ddo\ge\Ldots\ge\findi1=\indi$.
\item A braid is called \emph{$\sig\ii$-nonnegative} if it is $\sig\ii$-positive or it belongs
to~$\BB\ii$.
\item  For $\indi\le\nnt$, a braid~$\br$ is called \emph{$\sigg$-positive of type 
$\aa\indi\nn$} if it can be expressed as \[\brpos\cdot\dddd\indi\nn\cdot\brneg,\] where
$\brpos$ is $\sig\nno$-nonnegative and $\brneg$ is $\aa\indi\nn$-dangerous.
\item  A braid~$\br$ is called \emph{$\sigg$-positive of type $\aa\nno\nn$} if it is $1$, or equal to \[\brr\cdot\aa\nno\nn,\]
 where $\brr$ is a $\sigg$-positive braid of type $\aa1\nn$.
\end{deflist}
\end{defi}

Note that, an $\aa\indi\nn$-braid with $\indi\not=\nno$ is not the trivial one, \ie, is different from~$1$, as it contains $\dddd\indi\nno$, which is non trivial.
In the other hand, the only $\aa\nno\nn$-danger ours braid is $1$.

We observe that the definition of~$\sigg$-positive of type~$\aa\nno\nn$ is  different from the
definition of $\sigg$-positive of type~$\aa\indi\nn$ for $\indi < \nno$ (technical reasons make such a
distinction necessary).

Saying a braid is $\sigg$-positive of type $\aa\indi\nn$ is motivated by the fact that $\aa\indi\nn$ is the simplest $\sigg$-positive braid of its type.
 
\begin{lemm}
\label{L:TypeSigmaPos}
Assume that $\br$ is a $\sigg$-positive braid of type $\aa\indi\nn$. Then

$(i)$ $\br$ is $\sig\nno$-positive,

$(ii)$ $\ff\nnp(\br)$ is $\sigg$-positive of type $\aa\indip\nnp$,

$(iii)$ if $\indi=1$ then $\br\,\ddd\nno^{-\tt}$ is $\sigg$-positive of type
$\aa1\nn$ for all $\tt\ge0$,

$(iv)$ if $\br\not=\aa\nno\nn$ holds, then $\brbr\,\br$ is $\sigg$-positive of type~$\aa\indi\nn$ for every $\sig\nno$-nonnegative braid $\brbr$.
\end{lemm}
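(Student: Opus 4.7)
My plan is to unpack the structural definition of ``$\sigg$-positive of type $\aa\indi\nn$'' and read off each of (i)--(iv) from the factorization $\br=\brpos\,\dddd\indi\nn\,\brneg$ (or, for $\indi=\nno$, from the recursive description $\br=\brr\cdot\aa\nno\nn$ with $\brr$ of type $\aa1\nn$). Three of the four items will be essentially a check of definitions. For (i), I will count occurrences of $\sigpm\nno$ and of $\sigpm\jj$ with $\jj\ge\nn$ in the word $\brpos\,\dddd\indi\nn\,\brneg$: the factor $\dddd\indi\nn=\sig\indi\sig\indip\cdots\sig\nno$ contributes the unique $\sig\nno$; the factor $\brpos$, being $\sig\nno$-nonnegative, adds no $\siginv\nno$ and no $\sigpm\jj$ with $\jj\ge\nn$; and the factor $\brneg$ lies in $\BB\nno$ and adds no such letter either. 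This will make $\br$ a $\sig\nno$-positive word, and the case $\indi=\nno$ follows by multiplying on the right by $\aa\nno\nn=\sig\nno$. For (iii), the braid $\ddd\nno^{-\tt}$ equals $\dddd1\nno^{-\tt}$, and appending it to $\brneg$ merely adds $\tt$ further $\dddd1\nno\inv$ factors; since $\brneg$ already ends in $\dddd1\nno\inv$ (because $\findi1=1$), the enlarged product still satisfies the decreasing condition with terminal index $1$, so it remains $\aa1\nn$-dangerous. For (iv) with $\indi\le\nnt$, I will observe that the product of two $\sig\nno$-nonnegative braids is again $\sig\nno$-nonnegative, so that $\brbr\,\br=(\brbr\,\brpos)\,\dddd\indi\nn\,\brneg$ is again of type $\aa\indi\nn$; the special case $\indi=\nno$ follows by applying the generic case of (iv) to $\brr$ in the decomposition $\br=\brr\cdot\aa\nno\nn$ (with $\brr\neq\aa\nno\nn$ because $\brr$ is of type $\aa1\nn$).

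For (ii) I will apply $\ff\nnp$ to the factorization and exploit $\ff\nnp(\sig\ii)=\sig\iip$ for $\ii\le\nno$, so that $\ff\nnp(\dddd\indi\nn)=\dddd\indip\nnp$ and $\ff\nnp(\dddd\jj\nno\inv)=\dddd\jjp\nn\inv$. The image $\ff\nnp(\brpos)$ will be $\sig\nn$-nonnegative because $\brpos$ is either $\sig\nno$-positive (giving $\ff\nnp(\brpos)$ $\sig\nn$-positive) or lies in $\BB\nno$ (giving $\ff\nnp(\brpos)$ in the subgroup generated by $\sig2,\Ldots,\sig\nno$, which sits inside $\BB\nn$). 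For $\indi\le\nnt$ the rewritten expression $\ff\nnp(\brpos)\,\dddd\indip\nnp\,\ff\nnp(\brneg)$ then matches the definition of type $\aa\indip\nnp$ verbatim. For $\indi=\nno$ the case $\br=1$ is immediate, and for $\br=\brr\cdot\aa\nno\nn$ I obtain $\ff\nnp(\br)=\ff\nnp(\brr)\cdot\aa\nn\nnp$, which reduces the task to promoting $\ff\nnp(\brr)$ from type $\aa2\nnp$ (the output of the generic step) to type $\aa1\nnp$. This promotion will rely on the identities $\dddd2\nnp=\sig1\inv\,\dddd1\nnp$ and $\dddd2\nn\inv=\dddd1\nn\inv\,\sig1$ (both read off from~\eqref{E:RelationD:i}--\eqref{E:RelationD:ii}), the commutation of $\sig1$ with $\sig\kk$ for $\kk\ge3$, and the conjugation $\aa1\nnp=\sig1\,\aa2\nnp\,\sig1\inv$, which together let the surplus $\sig1$ factors introduced by the rewritings slide past the dangerous tail and annihilate each other.

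The main obstacle will be precisely this last promotion step in (ii): type-$\aa1\nnp$ and type-$\aa2\nnp$ factorizations have syntactically different middle blocks ($\dddd1\nnp$ vs.\ $\dddd2\nnp$) and different terminal factors in the dangerous tail ($\dddd1\nn\inv$ vs.\ $\dddd2\nn\inv$), so showing that every braid of the second form also admits a factorization of the first is where the Birman--Ko--Lee relations genuinely enter the argument. All other steps amount to direct bookkeeping on the factorization defining the type.
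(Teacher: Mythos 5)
Your approach to $(i)$, $(iii)$, and the $\indi\le\nnt$ half of $(iv)$ is essentially the paper's: for $(i)$ count occurrences of $\sigpm\nno$ (the paper phrases it as "dangerous braids are $\sig\nno$-nonnegative and $\dddd\indi\nn$ is $\sig\nno$-positive"); for $(iii)$ observe that appending $\dddd1\nno\inv$-factors preserves the $\aa1\nn$-dangerous property since the terminal index stays $1$; for $(iv)$ use that products of $\sig\nno$-nonnegative braids are $\sig\nno$-nonnegative. These match the paper. For the $\indi=\nno$ branch of $(iv)$ your reduction to the generic case applied to $\brr$ is also what the paper does, but your parenthetical justification ``$\brr\ne\aa\nno\nn$ because $\brr$ is of type $\aa1\nn$'' is both irrelevant (the generic case of $(iv)$ carries no exclusion hypothesis, so the claim isn't needed) and false: one can check $\sig\nno=(\siginv{\nnt}\Ldots\siginv1\sig\nno)\cdot\dddd1\nn\cdot\dddd1\nno\inv$ with $\sig\nno$-positive left factor, so $\aa\nno\nn$ is of type $\aa1\nn$.

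The real issue is in $(ii)$ for $\indi=\nno$. You have correctly located a genuine subtlety: applying $\ff\nnp$ to the decomposition of $\brr$ of type $\aa1\nn$ only yields a type-$\aa2\nnp$ expression, so some promotion to type $\aa1\nnp$ is required before concluding that $\ff\nnp(\br)=\ff\nnp(\brr)\cdot\aa\nn\nnp$ is of type $\aa\nn\nnp$. (The paper's own write-up glosses over this by treating all $\indi$ uniformly via $\ff\nnp(\br)=\ff\nnp(\brpos)\,\dddd\indip\nnp\,\ff\nnp(\brneg)$, but for $\indi=\nno$ that identity only shows $\ff\nnp(\brpos)$ is $\sig\nn$-nonnegative, not that it is of type $\aa1\nnp$, which is what the recursive definition of type $\aa\nn\nnp$ requires.) Unfortunately the mechanism you propose does not achieve the promotion. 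Rewriting $\dddd2\nnp=\sig1\inv\dddd1\nnp$ and $\dddd2\nn\inv=\dddd1\nn\inv\sig1$ puts a stray $\sig1$ to the right of the dangerous tail, and there is no way to "slide" it back to meet the $\sig1\inv$: moving it left across the terminal $\dddd1\nn\inv$ simply recreates $\dddd2\nn\inv$ (undoing the rewrite), it cannot cross any factor $\dddd2\nn\inv$ appearing in the tail (since $\sig1$ and $\sig2$ do not commute), and moving it right past $\aa\nn\nnp=\sig\nn$ only parks it after the final letter, outside any admissible factorization. So the $\sig1$'s do not annihilate, and the claim that $\ff\nnp(\brr)$ is of type $\aa1\nnp$ remains unproved. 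The promotion is in fact true (for instance $\aa24$ does admit a type-$\aa14$ factorization, with left factor $\siginv1\sig2$ adjusted by braid relations), but establishing it genuinely requires nontrivial braid rewriting beyond commutation of $\sig1$ with $\sig\kk$, $\kk\ge3$, and the three identities you list; your sketch does not supply it.
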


\begin{proof}
$(i)$ An $\aa\indi\nn$-dangerous braid is $\sig\nno$-nonnegative (actually it is
$\sig\nnt$-negative), and the braid $\dddd\indi\nn$ is $\sig\nno$-positive. Therefore
$\br$ is $\sig\nno$-positive.

$(ii)$ With the notation of Definition~\ref{D:Dangerous}, let
$\dddd{\findi\dd}\nno\inv\
\Ldots\ \dddd{\findi1}\nno\inv$ be
the decomposition of $\brneg$, with $\findi1=\indi$. Then we have
$$\ff\nnp(\brneg) = \dddd{\findi\dd+1}\nn\inv\ \Ldots\ \dddd{\findi1+1}\nn\inv,$$
an $\aa\indip\nnp$-dangerous word. By definition, the braid $\brpos$ can be represented
by a word on the alphabet $\sigpm\ii$ with $\ii\le\nnt$. As, for $\ii\le\nnt$, the
image of $\sig\ii$ by $\ff\nnp$ is $\sig\iip$, the braid $\ff\nnp(\brpos)$ is
$\sig\nn$-nonnegative. So the relation
$\ff\nnp(\br)=\ff\nnp(\brpos)\cdot\dddd\indip\nnp\cdot\ff\nnp(\brneg)$
witnesses that $\ff\nnp(\br)$ is $\sigg$-positive of type $\aa\indip\nnp$.

Point~$(iii)$ directly follows from the fact that, if $\brbrneg$ is an
$\aa1\nn$-dangerous braid, then, for each $\tt\ge0$, the braid
$\brbrneg\,\ddd\nno^{-\tt}$ is also
$\aa1\nn$-dangerous.

$(iv)$ Assume $\indi\le\nnt$.
Then, by definition, we have $\br=\brpos\cdot\dddd\indi\nn\cdot\brneg$, where $\brpos$ is $\sig\nno$-nonnegative and $\brneg$ is $\aa\indi\nn$-dangerous.
Hence, we get $\brbr\,\br=\brbr\,\brpos\cdot\dddd\indi\nn\cdot\brneg$.
As the product of $\sig\nno$-nonnegative braids is $\sig\nno$-nonnegative, the braid $\brbr\,\br$ is $\sigg$-positive of type $\aa\indi\nn$.

Assume now $\indi=\nno$.
As, by hypothesis $\br$ is different from $\aa\nno\nn$, we have $\br=\brr\cdot\aa\nno\nn$, where $\brr$ is $\sigg$-positive of type $\aa1\nn$.
The case $\indi\le\nnt$ implies that the braid $\brbr\,\brr$ is $\sigg$-positive of type $\aa1\nn$.
Hence the braid $\brbr\,\br$, which is equal to $\brbr\,\brr\cdot\aa\nno\nn$, is
$\sigg$-positive of type~$\aa\nno\nn$.
\end{proof}

\begin{rema}
For $\tt\ge1$, the braid $\sep\nn\tt$ is $\sigg$-positive of type $\aa1\nn$.
Indeed, by Proposition~\ref{P:Sep}$(ii)$, we have $\sep\nn\tt=\ddd\nn^\tto\cdot\ddd\nn\cdot\ddd\nno^{-\tt}$, the right-hand side being an explicit $\sigg$-positive braid of type $\aa1\nn$.
\end{rema}

\subsection{Properties of $\sigg$-positive braids of type $\aa\indi\nn$}
\label{SS:Properties}

We now show that the entries in a $\ff\nn$-splitting
give raise to $\sigg$-positive braids of type~$\aa\indi\nn$, for some~$\indi$ that can
be effectively controlled.

\begin{lemm}
\label{L:SigsPositive}
For $\nn\ge3$, every braid with last letter~$\aa\indi\nn$ is $\sigg$-positive of type
$\aa\indi\nn$.
\end{lemm}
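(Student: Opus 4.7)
The plan is to induct on the length of the rotating normal form of $\br$. In the base case the normal form has one letter, so $\br = \aa\indi\nn$. For $\indi \le \nnt$, the identity $\aa\indi\nn = 1 \cdot \dddd\indi\nn \cdot \dddd\indi\nno\inv$ furnished by~\eqref{E:AConjugateD} realizes $\br$ as $\sigg$-positive of type $\aa\indi\nn$ with $\brpos = 1 \in \BB\nno$ and $\brneg = \dddd\indi\nno\inv$ a length-one dangerous sequence ($\findi1 = \indi$); for $\indi = \nno$, the braid $\aa\nno\nn$ is $\sigg$-positive of type $\aa\nno\nn$ by its trivial decomposition.

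The key auxiliary fact for the induction is that $\ff\nn(\brbr)$ is $\sig\nno$-nonnegative for every $\brbr \in \BKL\nn$. Indeed, $\ff\nn$ sends each generator $\aa\indi\indii$ to $\aa\indip\indiip$ (when $\indii \le \nno$) or to $\aa1\indip$ (when $\indii = \nn$), and by the Artin expansion~\eqref{E:DualGenerator} the maximum Artin index in either of these letters is at most $\nno$, with $\sig\nno$ appearing only positively when it appears at all. Since the product of $\sig\nno$-nonnegative braids is $\sig\nno$-nonnegative, the claim follows. Letting $(\br_\brdi, \Ldots, \br_1)$ denote the $\ff\nn$-splitting of $\br$, the fact that letters in the normal form of $\br_1 \in \BKL\nno$ avoid the index $\nn$ forces $\br_1 = 1$, so $\br = \ff\nn^{\brdi\minus1}(\br_\brdi) \cdot ... \cdot \ff\nn(\br_2)$.

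In the case $\indi \le \nnt$, Lemma~\ref{L:LastLetter}(i) lets me write $\br_2 = \brr \cdot \aa\indio\nno$ with $\brr \in \BKL\nno$ whenever $\br_2 \ne 1$; the case $\br_2 = 1$ is handled analogously, going one level deeper in the splitting. Then $\br = \brpos \cdot \aa\indi\nn$, where $\brpos = \ff\nn(\brbr)$ for the braid $\brbr = \ff\nn^{\brdi\minus2}(\br_\brdi) \cdot ... \cdot \ff\nn(\br_3) \cdot \brr \in \BKL\nn$; by the auxiliary fact, $\brpos$ is $\sig\nno$-nonnegative. Substituting $\aa\indi\nn = \dddd\indi\nn \cdot \dddd\indi\nno\inv$ via~\eqref{E:AConjugateD} then exhibits $\br = \brpos \cdot \dddd\indi\nn \cdot \dddd\indi\nno\inv$ as $\sigg$-positive of type $\aa\indi\nn$.

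The case $\indi = \nno$ will be the main obstacle, since here the conclusion requires the stronger property that $\br \cdot \aa\nno\nn\inv$ be $\sigg$-positive of type $\aa1\nn$, not merely $\sig\nno$-nonnegative. My plan is to exploit Lemma~\ref{L:LastLetter}(iii): since $\br_2$ ends in $\aa\nnt\nno$, its normal form forces a penultimate letter $\aa\qq\nno$, which after applying $\ff\nn$ becomes a letter $\aa\qqp\nn$ in the normal form of $\br \cdot \aa\nno\nn\inv$. Invoking the induction hypothesis at this occurrence gives a decomposition of type $\aa\qqp\nn$, which I convert to type $\aa1\nn$ using the identity $\dddd\qqp\nn = \dddd1\qqp\inv \cdot \dddd1\nn$ derived from~\eqref{E:RelationD:ii} and absorbing the extra $\sig\nno$-nonnegative factor $\dddd1\qqp\inv$ into $\brpos$ via Lemma~\ref{L:TypeSigmaPos}(iv). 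If $\br_2$ reduces to the single letter $\aa\nnt\nno$, a further recursion into $\br_3$ is required; the splitting condition~\eqref{E:SplittingCondition} together with Lemma~\ref{L:LastLetter}(iii) forces $\br_3$ to also end in $\aa\nnt\nno$, making $\ff\nn^2(\aa\nnt\nno) = \aa1\nn$ the penultimate letter of $\br$, at which point the same conversion applies.
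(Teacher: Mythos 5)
Your treatment of the case $\indi\le\nnt$ reaches the correct conclusion but takes an unnecessary detour through the $\ff\nn$-splitting: since the normal form of~$\br$ ends in $\aa\indi\nn$, one may simply write $\br=\brr\cdot\aa\indi\nn$ with $\brr$ a positive braid (hence $\sig\nno$-nonnegative) and substitute~\eqref{E:AConjugateD}, which is exactly what the paper does. In particular the subcase $\br_2=1$ that you propose to handle ``analogously'' does not arise in that shorter argument, and in your setup it is not in fact analogous (one checks it forces $\indi=1$).

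The genuine gap is in the case $\indi=\nno$. You apply the induction hypothesis to $\brr=\br\cdot\aa\nno\nn\inv$ to obtain $\brr=\brpos\cdot\dddd\qqp\nn\cdot\brneg$ with $\brpos$ $\sig\nno$-nonnegative and $\brneg$ an $\aa\qqp\nn$-dangerous braid, and then substitute $\dddd\qqp\nn=\dddd1\qqp\inv\,\dddd1\nn$. The resulting expression $\brr=(\brpos\,\dddd1\qqp\inv)\cdot\dddd1\nn\cdot\brneg$ is \emph{not} a witness that $\brr$ is $\sigg$-positive of type $\aa1\nn$: by Definition~\ref{D:Dangerous} the right factor would have to be $\aa1\nn$-dangerous, \ie, admit a decomposition $\dddd{\findi\dd}\nno\inv\cdots\dddd{\findi1}\nno\inv$ whose last factor is $\dddd1\nno\inv$, whereas all you know about $\brneg$ is that it ends in $\dddd\qqp\nno\inv$. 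For $\qqp\ge2$ these are genuinely different requirements (for instance $\dddd2\nno\inv$ is $\aa2\nn$-dangerous but not $\aa1\nn$-dangerous, as an exponent-sum count shows), and the appeal to Lemma~\ref{L:TypeSigmaPos}$(iv)$ only concerns multiplication on the \emph{left} and cannot repair a defect on the right. The paper sidesteps the issue by not invoking induction at this point at all: it writes $\brr=\brrr\cdot\aa\qqp\nn$ with $\brrr$ positive, uses the dual relation $\aa1\qqp\,\aa\qqp\nn=\aa\qqp\nn\,\aa1\nn$ to get $\aa\qqp\nn=\aa1\qqp\inv\,\aa\qqp\nn\,\aa1\nn$, and expands $\aa1\nn=\dddd1\nn\,\dddd1\nno\inv$ via~\eqref{E:AConjugateD}, producing the explicit type-$\aa1\nn$ decomposition $\brr=(\brrr\,\aa1\qqp\inv\,\aa\qqp\nn)\cdot\dddd1\nn\cdot\dddd1\nno\inv$ in which $\dddd1\nno\inv$ is the dangerous factor. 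Some algebraic rewriting of this kind, rather than a recursion plus a change of the $\dddd$-factor, is needed to close your gap.
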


\begin{proof}
Let $\br$ be a braid of $\BKL\nn$ with last letter~$\aa\indi\nn$
(Definition~\ref{D:Last}). Put $\br = \brr\cdot\aa\indi\nn$.
Assume first $\indi\le\nnt$.
Then, by~\eqref{E:AConjugateD}, we have $\br
= \brr\cdot\dddd\indi\nn\cdot\dddd\indi\nno\inv$, an explicit $\sigg$-positive braid of
type $\aa\indi\nn$, since the braid $\brr$ is positive, hence $\sig\nno$-nonnegative, and $\dddd\indi\nno\inv$ is $\aa\indi\nn$-dangerous.

Assume now $\indi=\nno$.
The case $\brr=1$ is clear.
For $\brr\not=1$,  by Lemma~\ref{L:LastLetter}(iii), there is a positive braid $\brrr$ satisfying
$\brr=\brrr\cdot\aa\indii\nn$ for some $\indii$. The relation
$\aa1\indii\,\aa\indii\nn=\aa\indii\nn\,\aa1\nn$ implies
$\aa\indii\nn=\aa1\indii\inv\,\aa\indii\nn\,\aa1\nn$. Using~\eqref{E:AConjugateD} for
$\aa1\nn$ gives
\[
 \brr=\brrr\,\aa1\indii\inv\,\aa\indii\nn\cdot\dddd1\nn\cdot\dddd1\nno\inv,
\]
an explicit $\sigg$-positive braid of type~$\aa1\nn$.
Therefore, $\br$ is $\sigg$-positive of type~$\aa1\nn$.
\end{proof}

We shall see now that the normal form of every braid $\br$ of $\BKL\nno$ such that the~$\BKL\nno$-tail of $\ff\nn(\aa\indi\nn\,\br)$ is trivial contains a sequence of overlapping letters $\aa\indiii\indiv$. Words containing such sequences are what we shall call ladders.

\begin{defi}
\label{D:Ladder}
For $\nn\ge 3$, we say that a normal word~$\ww$ is an
\emph{$\aa\indi\nn$-ladder lent on~$\aa\indiio\nno$}, if
there exists a decomposition
\begin{equation}
\label{E:Ladder}
\ww = \ww_0\,\xx_1\,\ww_1 \,...\, \ww_\hho\,\xx_\hh\,\ww_\hh,
\end{equation}
and a sequence $\indi = \findi0 < \findi1 < ... < \findi\hh = \nno$ such that

$(i)$ for each~$\kk\le\hh$, the letter~$\xx_\kk$ is of the form  $\aa{\find\kk}{\findi\kk}$ with $\find\kk{<}\findi{\kko}{<}\findi\kk$,

$(ii)$ for each~$\kk<\hh$, the word~$\ww_\kk$ contains no letter $\aa\indi\indii$ with $\indi{<}\findi\kk{<}\indii$,

$(iii)$ the last letter of $\ww$ is~$\aa\indiio\nno$.
\end{defi}

By convention, an $\aa\nno\nn$-ladder lent on~$\aa\indiio\nno$ is a word on the letters $\aa\indi\indii$ whose last letter is $\aa\indiio\nno$.

The concept of a ladder is easily illustrated by representing the
generators~$\aa\indi\indii$ as a vertical line from the~$\indi$th line to
the~$\indii$th line on an $\nn$-line stave.  Then, for every $\kk\ge0$, the
letter~$\xx_\kk$ looks like a bar of a ladder---see Figure~\ref{F:Ladder}.

\setlength{\unitlength}{1mm}
\begin{figure}[htb]
\begin{picture}(110,23)
\put(2,0){\includegraphics{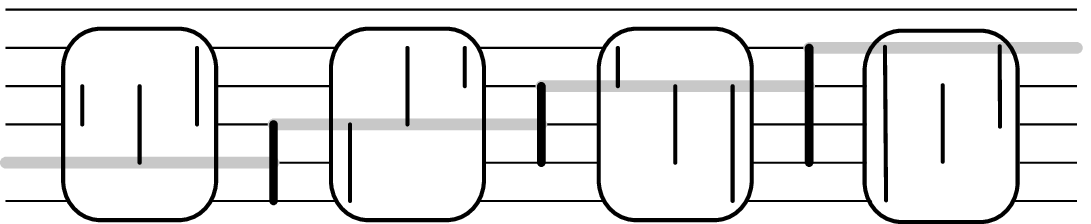}}
\put(0,1){\footnotesize $1$}
\put(0,21){\footnotesize $6$}
\end{picture}
\caption{{\sf \smaller The bars of the ladder are represented by black thick vertical lines.
 An $\aa25$-ladder lent on~$\aa35$ (the last letter).
The gray line starts at position $2$ and goes up to position $5$ using the bars of the ladder.
The empty spaces between bars in the ladder are represented by a framed box.
In such boxes the vertical line representing the letter~$\aa\indi\indii$ does not cross the gray line.
}}
\label{F:Ladder}
\end{figure}

\begin{prop}\cite{F:SR}
\label{P:SplittingAndLadders}
Assume $\nn\ge3$ and that $(\br_\brdi,\Ldots, \br_1)$ is the $\ff\nn$-splitting of some braid of~$\BKL\nn$.
Then, for each~$\kk$ in~$\{\brdio, \Ldots, 3\}$, the normal form
of~$\br_\kk$ is a  $\ff\nn(\last{\br}_\kkp)$-ladder lent on~$\last{\br}_\kk$.
The same results hold for $\kk=2$ whenever $\br_2$ is not $1$.
\end{prop}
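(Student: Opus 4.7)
The plan is to examine, for each $\kk$ in the stated range, the local structure imposed by condition~\eqref{E:SplittingCondition} at the interface between $\ff\nn(\br_\kkp)$ and $\br_\kk$. By Lemma~\ref{L:LastLetter}$(i)$, $\last\br_\kkp$ has the form $\aa\indi\nno$ for some $\indi$, hence the rightmost letter of $\ff\nn(\br_\kkp)$ is $\ff\nn(\last\br_\kkp) = \aa\indip\nn$, while the rightmost letter of $\br_\kk$ is some $\aa\indiio\nno$ by the same lemma. Condition~(iii) of being a ladder lent on $\aa\indiio\nno$ is then immediate, and the task reduces to exhibiting the bars $\xx_1, \Ldots, \xx_\hh$ and verifying conditions~(i) and~(ii).

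I would proceed by reading the normal form of $\br_\kk$ from right to left, building the sequence $\xx_\hh, \xx_\hho, \Ldots, \xx_1$ together with the indices $\nno = \findi\hh > \findi\hho > \Ldots > \findi0 = \indip$. The crux is the following principle: suppose that at some stage the bars $\xx_\kk, \Ldots, \xx_\hh$ have been identified and no letter to the left of $\xx_\kk$ strictly straddles the index $\findi\kk$. Then the letter $\aa\indip\nn$ at the end of $\ff\nn(\br_\kkp)$ could be slid rightward through the portion of $\br_\kk$ lying to the left of $\xx_\kk$ by repeated use of the disjoint/nested commutation relation~\eqref{E:DualCommutativeRelation}; combining this with the interactions between $\aa\indip\nn$ and the already-identified bars via~\eqref{E:DualNonCommutativeRelation} would eventually produce an element of $\BKL\nno$ right-dividing the larger subproduct, violating the triviality of its $\BKL\nno$-tail required by~\eqref{E:SplittingCondition}. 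The first letter to the left of $\xx_\kk$ that blocks this slide must therefore exist and becomes $\xx_\kko$; its indices are forced to satisfy $\find\kko < \findi\kk < \findi\kko$ by the very criterion for non-commutation with $\aa\indip\nn$, which is condition~(i). Condition~(ii) on the intermediate words $\ww_\kk$ follows by the same logic: a letter strictly straddling $\findi\kk$ placed inside $\ww_\kk$ would either expose a bar higher than the one already selected, contradicting the right-to-left choice, or would permit a further rewrite violating the trivial $\BKL\nno$-tail condition. The process terminates with $\findi0 = \indip$ precisely when $\aa\indip\nn$ is finally caught by $\xx_1$.

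The main obstacle I anticipate is verifying that the rewrites invoked in the sliding argument can be carried out while respecting the rotating normal form, so that the bars $\xx_\kk$ and intermediate letters appear as honest letters of the normal word rather than phantom letters produced only up to braid-equivalence. This demands a careful joint analysis of the dual presentation relations~\eqref{E:DualCommutativeRelation}--\eqref{E:DualNonCommutativeRelation} together with the recursive definition of normal form inside $\BKL\nno$, so that each combinatorial obstruction lifts back to an actual letter of the normal form. The case $\kk = 2$ requires the extra hypothesis $\br_2 \ne 1$ only because $\last\br_2$ is otherwise undefined; once this is granted, the argument above proceeds identically.
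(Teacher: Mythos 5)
The statement you are trying to prove is not proved in this paper at all: it is cited from~\cite{F:SR} (the reference labeled there as Proposition~5.7 and its companions), and the paper merely imports it. So there is no internal proof to compare against; I can only judge the sketch on its own merits.

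Your overall picture is plausible, and you correctly identify the two easy points: Lemma~\ref{L:LastLetter}$(i)$ gives the form of the last letters on both sides of the interface, so that the ladder's top index is~$\nno$ and clause~$(iii)$ of Definition~\ref{D:Ladder} is immediate. However, the core of the argument has real gaps, and I do not believe it would survive being written out. The central issue is the ``sliding'' step. You claim that if the bars did not climb all the way down to level~$\indip$, then the letter~$\aa\indip\nn$ at the end of~$\ff\nn(\br_\kkp)$ could slide \emph{rightward} into~$\br_\kk$, yielding a nontrivial $\BKL\nno$-right-divisor and contradicting~\eqref{E:SplittingCondition}. But~\eqref{E:SplittingCondition} constrains the $\BKL\nno$-tail of the product \emph{to the left of}~$\br_\kk$, namely $\ff\nn^{\brdi-\kk}(\br_\brdi)\cdots\ff\nn(\br_\kkp)$, and pushing $\aa\indip\nn$ rightward into~$\br_\kk$ does not produce a right divisor of that product. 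If anything the argument should go the other way: a prefix of~$\br_\kk$ lying entirely below the gap, if it existed, would commute leftward past~$\aa\indip\nn$ and thereby exhibit a nontrivial $\BKL\nno$-tail for the left factor. You have the correct constraint in mind but invoke it in the wrong direction, and after the correction there is still the separate problem that you would then need to know such a commuting prefix exists, which is not automatic from the absence of a straddling letter alone.

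The second gap is the one you flag yourself: you need the bars and the intermediate blocks to be \emph{letters of the rotating normal form of~$\br_\kk$}, not merely letters of some braid-equivalent word produced by the rewrite. Turning the combinatorial obstruction into an actual letter of the normal word is precisely the technical heart of the argument in~\cite{F:SR}, and your sketch defers it entirely (``this demands a careful joint analysis\ldots''). Likewise, the termination of the ladder at exactly~$\findi0=\indip$ is asserted (``the process terminates\ldots precisely when~$\aa\indip\nn$ is finally caught'') rather than argued: nothing in the sketch rules out a ladder whose lowest rung stops short of~$\indip$ or overshoots it. Finally, the claim that condition~$(ii)$ on the interstitial words~$\ww_\kk$ ``follows by the same logic'' is not quite right---a straddling letter inside~$\ww_\kk$ would simply have been chosen as the bar~$\xx_\kk$ under a right-to-left greedy reading, so~$(ii)$ is near-automatic from the construction rather than from the same sliding contradiction; conflating the two obscures what is actually at stake. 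In short: the intuition is in the right direction, but the purported contradiction with~\eqref{E:SplittingCondition} is misdirected and the normal-form fidelity that makes the proof work is missing.
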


A direct consequence of Proposition 5.7 of~\cite{F:SR} is

\begin{lemm}
\label{L:DangerousAgainstLadder}
Assume $\nn\ge3$ and let $\br$ be a braid represented by an  $\aa\indi\nn$-ladder lent on~$\aa\indiio\nno$ with $\indii\not=\nno$ and $\brbrneg$ be an $\aa\indi\nn$-dangerous braid.
Then $\brbrneg\,\br$ is a $\sigg$-positive braid of type~$\aa\indiio\nno$.
\end{lemm}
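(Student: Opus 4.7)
The plan is to unpack Proposition~5.7 of~\cite{F:SR}, which already provides the key combinatorial rewriting for a dangerous prefix interacting with a ladder. Unfolding the definitions, one writes $\brbrneg = \dddd{\findi\dd}\nno\inv \cdots \dddd{\findi1}\nno\inv$ with $\findi1 = \indi$, and decomposes the ladder as $\ww = \ww_0\,\xx_1\,\ww_1 \cdots \xx_\hh\,\ww_\hh$ whose bars $\xx_\kk = \aa{\find\kk}{\findi\kk}$ climb strictly from $\findi0 = \indi$ up to $\findi\hh = \nno$. The task is then to exhibit an explicit factorization
\[
\brbrneg\,\br \;=\; \brrpos \cdot \dddd\indiio\nno \cdot \brrneg
\]
in which $\brrpos$ is $\sig\nnt$-nonnegative and $\brrneg$ is $\aa\indiio\nno$-dangerous.

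I would proceed by induction on~$\hh$, pushing the rightmost factor $\dddd\indi\nno\inv$ of~$\brbrneg$ rightward through the ladder. At each step, this factor either commutes past a block $\ww_\kk$ (condition~(ii) of Definition~\ref{D:Ladder} guarantees that $\ww_\kk$ contains no letter straddling the current $\findi$-index, so the commutations~\eqref{E:DualCommutativeRelation} apply), or it meets the next bar $\xx_\kk$, at which point the Garside-style identities \eqref{E:AConjugateD}--\eqref{E:RelationD:iii} allow one bar of the climb to be absorbed into $\dddd\indi\nno\inv$, replacing it by a shorter $\dddd{..}\nnt\inv$ factor that continues to travel rightward. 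Iterating over all $\dd$ dangerous factors, the ascending ladder is consumed by the descending inverses, and the net climb that survives up to~$\nno$ is precisely $\dddd\indiio\nno$; the positive residues swept to the left form $\brrpos$, and the shorter $\dddd{..}\nnt\inv$ residues gathered on the right form $\brrneg$.

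The $\sig\nnt$-nonnegativity of $\brrpos$ holds because every $\sigpm\nnt$ produced in the course of the rewriting either cancels against its partner inside a ladder bar or is folded into the extracted $\dddd\indiio\nno$, so no uncancelled $\siginv\nnt$ can survive outside that factor. The hypothesis $\indii \ne \nno$ ensures that the last bar of the ladder does not trivially collapse into the climb itself, so that the output is genuinely of type~$\aa\indiio\nno$ rather than a degenerate case that one would have to handle separately under the convention attached to $\aa\nno\nn$-ladders.

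The principal obstacle will be the bookkeeping needed to verify that $\brrneg$ is genuinely $\aa\indiio\nno$-dangerous, \ie, that its sequence of indices is non-increasing with rightmost value $\indiio$. This monotonicity relies critically on the strict inequalities $\find\kk < \findi\kko < \findi\kk$ governing the ladder bars, combined with the non-increasing sequence of original indices in $\brbrneg$, and is exactly the content captured by Proposition~5.7 of~\cite{F:SR}. Once that proposition is correctly applied, the lemma follows by assembling the resulting factorization.
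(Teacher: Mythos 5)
The paper gives no independent proof of this lemma: it is stated as ``a direct consequence of Proposition~5.7 of~\cite{F:SR}'', which is precisely the reduction at which your proposal arrives in its final paragraph. In that sense you have identified the same route. The intermediate sketch you offer of how the dangerous factors are threaded through the ladder bars---pushing each $\dddd{\findi\kk}\nno\inv$ rightward, letting Definition~\ref{D:Ladder}(ii) supply commutations across the blocks $\ww_\kk$, and absorbing bars $\xx_\kk$ via~\eqref{E:AConjugateD}--\eqref{E:RelationD:iii}---is a reasonable picture of what the cited proposition encodes, but as written it is not a verification: you never actually carry out the index bookkeeping, and you explicitly defer it to the cited result. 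So you have not supplied more than the paper does. One small point to fix: you attribute the hypothesis $\indii\not=\nno$ to the ladder rather than to the landing letter. The condition ensures $\indiio\not=\nnt$, i.e.\ that the \emph{output} type $\aa\indiio\nno$ falls in the generic clause of Definition~\ref{D:Dangerous}(iii) rather than the special $\aa\nnt\nno$ clause of Definition~\ref{D:Dangerous}(iv); it has nothing to do with the degenerate convention for $\aa\nno\nn$-ladders on the input side.
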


We are now ready to prove that the non-terminal entries of  a $\ff\nn$-splitting $(\br_\brdi,\Ldots,\br_1)$ have the expected property, namely that the braid $\br_\kk$ provides a protection against a $\ff\nn(\last{\br}_\kkp)$-dangerous braid, in the sense that if $\brbrneg_\kkp$ is a $\last{\br}_\kkp$-dangerous braid, then the braid $\ff\nn(\brbrneg_\kkp)\,\br_\kk$ is $\sigg$-positive of type $\last{\br}_\kk$.

\begin{prop}
\label{P:DangerousAgainstStair} 
Assume that $(\br_\brdi,\Ldots,\br_1)$ is a $\ff\nn$-splitting. Then, for each $\kk$ in $\{\brdio,\Ldots,3\}$ and every
$\last{\br}_\kkp$-dangerous braid $\brbrneg_\kkp$, the braid
$\ff\nn(\brbrneg_\kkp)\,\br_\kk$ is $\sigg$-positive of type~$\last{\br}_\kk$. 
Moreover  $\brbrneg_\kkp\,\br_\kk$ is different from $\aa\nnt\nno$, except if $\br$ is itself~$\aa\nnt\nno$. The same result holds for $\kk=2$, unless $\br_2$ is the trivial braid$1$.
\end{prop}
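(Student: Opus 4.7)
The plan is to combine three already-established structural results: Lemma~\ref{L:LastLetter}, which controls the form of the last letter in each entry of an $\ff\nn$-splitting; Proposition~\ref{P:SplittingAndLadders}, which identifies each $\br_\kk$ as an explicit ladder; and the Key Lemma~\ref{L:DangerousAgainstLadder}, which translates ``dangerous braid times ladder'' into $\sigg$-positivity of the predicted type. The argument will be identical for $3 \le \kk \le \brdio$ and for $\kk = 2$ when $\br_2 \ne 1$ (the case Proposition~\ref{P:SplittingAndLadders} covers).

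First, since $\kkp \ge 3$, Lemma~\ref{L:LastLetter}(i)--(ii) give $\br_\kkp \ne 1$ and $\last{\br}_\kkp = \aa\indi\nno$ for some $\indi \le \nnt$. Any $\aa\indi\nno$-dangerous decomposition of $\brbrneg_\kkp$, written $\dddd{\findi\dd}\nnt\inv \cdots \dddd{\findi1}\nnt\inv$ with $\findi1 = \indi$, is carried by the automorphism $\ff\nn$---via the index-shift~\eqref{E:GarsideAutoOnLetter}---to $\dddd{\findi\dd+1}\nno\inv \cdots \dddd{\findi1+1}\nno\inv$, witnessing that $\ff\nn(\brbrneg_\kkp)$ is $\aa\indip\nn$-dangerous in $\BKL\nn$, \ie, $\ff\nn(\last{\br}_\kkp)$-dangerous. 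Proposition~\ref{P:SplittingAndLadders} then expresses the normal form of $\br_\kk$ as an $\aa\indip\nn$-ladder lent on $\last{\br}_\kk$, and by Lemma~\ref{L:LastLetter}(i) one has $\last{\br}_\kk = \aa\indii\nno$ for some $\indii \le \nnt$.

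In the generic case $\indii < \nnt$, the side condition $\indii+1 \ne \nno$ of Lemma~\ref{L:DangerousAgainstLadder} is fulfilled and that lemma delivers directly that $\ff\nn(\brbrneg_\kkp)\,\br_\kk$ is $\sigg$-positive of type $\aa\indii\nno = \last{\br}_\kk$. The residual case $\indii = \nnt$, \ie, $\last{\br}_\kk = \aa\nnt\nno$, is the one excluded from Lemma~\ref{L:DangerousAgainstLadder} and needs a separate argument: I would write $\br_\kk = \www \cdot \aa\nnt\nno$ and, provided $\www \ne \varepsilon$, use Lemma~\ref{L:LastLetter}(iii) (iterated if several $\aa\nnt\nno$'s stack at the end) to locate the last letter $\aa\yy\nno$ with $\yy < \nnt$ inside $\www$; the truncated ladder structure on the corresponding prefix then reduces the computation to the generic case, and right-multiplication by $\aa\nnt\nno$ fits the resulting braid into the form $\brr \cdot \aa\nnt\nno$ prescribed by Definition~\ref{D:Dangerous}(d). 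The fully degenerate subcase $\www = \varepsilon$ together with $\brbrneg_\kkp = 1$ is exactly the one where the product equals $\aa\nnt\nno$ itself; propagating this equality back through the $\ff\nn$-splitting of $\br$ forces $\br = \aa\nnt\nno$, which accounts for the Moreover-clause.

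The main obstacle will be the degenerate case $\last{\br}_\kk = \aa\nnt\nno$. Two subtle points must be verified carefully: first, that truncating the $\aa\indip\nn$-ladder of $\br_\kk$ to the appropriate prefix $\www$ still yields a legitimate $\aa\indip\nn$-ladder lent on the claimed letter $\aa\yy\nno$ -- \emph{a priori} the stripped $\aa\nnt\nno$ could coincide with the top bar $\xx_\hh$ of the ladder, so conditions~(i) and~(ii) of Definition~\ref{D:Ladder} combined with iterated applications of Lemma~\ref{L:LastLetter}(iii) must be leveraged to forbid that and related configurations; and second, that the inner $\sigg$-positive factor obtained from the generic case admits a presentation of type $\aa1\nno$ and not merely of type $\aa\yy\nno$, as demanded by Definition~\ref{D:Dangerous}(d), which may in turn require absorbing or reshuffling outer factors using Lemma~\ref{L:TypeSigmaPos}. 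Beyond these points, the proof reduces to bookkeeping built directly on the structural lemmas already established in~\cite{F:SR}.
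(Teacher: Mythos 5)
Your overall plan coincides with the paper's: feed Lemma~\ref{L:LastLetter} and Proposition~\ref{P:SplittingAndLadders} into Lemma~\ref{L:DangerousAgainstLadder}, splitting into a generic case and the degenerate case $\last{\br}_\kk = \aa\nnt\nno$. The two ``subtle points'' you flag at the end are indeed exactly where the real content lies, but the resolutions you sketch for them do not match what actually works.

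On the first point, no iteration is needed. After writing the normal form of $\br_\kk$ as $\www\,\aa\nnt\nno$, the observation is simply that $\aa\nnt\nno$ can never be one of the bars $\xx_\jj$ of the $\ff\nn(\last{\br}_\kkp)$-ladder (it fails condition~$(i)$ of Definition~\ref{D:Ladder}), so $\www$ \emph{as is} inherits the full ladder structure and is a $\ff\nn(\last{\br}_\kkp)$-ladder lent on its own last letter $\aa\indi\nno$; Lemma~\ref{L:LastLetter}$(iii)$ guarantees this last letter has the form $\aa{..}\nno$. A single strip suffices; you do not need to chase down to a letter $\aa\yy\nno$ with $\yy<\nnt$. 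On the second point, Lemma~\ref{L:TypeSigmaPos} is not the tool: none of its clauses change the lower index of $\dddd\indi\nno$, which is what you need in order to land a factorization of type $\aa1\nno$ before reattaching the final $\aa\nnt\nno$. The paper instead invokes the elementary identity $\dddd1\indi\,\dddd\indi\nno = \dddd1\nno$, so that once Lemma~\ref{L:DangerousAgainstLadder} gives $\ff\nn(\brbrneg_\kkp)\,\brr_\kk = \brrpos_\kk\cdot\dddd\indi\nno\cdot\brrneg_\kk$, one rewrites it as $(\brrpos_\kk\,\dddd1\indi\inv)\cdot\dddd1\nno\cdot\brrneg_\kk$, absorbs $\dddd1\indi\inv$ into the $\sig\nnt$-nonnegative prefix, and thereby exhibits a type-$\aa1\nno$ braid. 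Without that specific algebraic step, the degenerate case remains open; your proposal recognizes it needs to be addressed but does not supply the ingredient that does the job.
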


\begin{proof}
Take $\kk$ in $\{\brdio,\Ldots,3\}$.
By definition of a dangerous braid, the braid $\ff\nn(\brbrneg_\kkp)$ is $\ff\nn(\last{\br}_\kkp)$-dangerous.
Assume $\last{\br}_\kk\not=\aa\nnt\nno$.
By Proposition~\ref{P:SplittingAndLadders}, the normal form of $\br_\kk$ is a
$\ff\nn(\last{\br}_\kkp)$-ladder lent on $\last{\br}_\kk$. Then, by Lemma~\ref{L:DangerousAgainstLadder}, 
the braid $\ff\nn(\brbrneg_\kkp)\,\br_\kk$ is $\sigg$-positive of type~$\last{\br}_\kk$.

Assume now $\last{\br}_\kk=\aa\nnt\nno$ with $\br_\kk\not=\aa\nnt\nno$.
By Proposition~\ref{P:SplittingAndLadders} again, the normal form of $\br_\kk$ is a $\ff\nn(\last{\br}_\kkp)$-ladder lent on $\aa\nnt\nno$.
Let $\www\,\aa\nnt\nno$ be the normal form of $\br$.
By definition of a ladder, as the letter $\aa\nnt\nno$ does not satisfy the condition $(i)$ of Definition~\ref{D:Ladder}, the word  $\www$ is an $\aa\indi\nn$-ladder lent on
$\aa{\indi}\nno$ for some $\indi$---see Lemma~\ref{L:LastLetter}$(iii)$.
We denote by $\brr_\kk$ the braid represented by $\www$.
Then, by Lemma~\ref{L:DangerousAgainstLadder}, the braid $\ff\nn(\brbrneg_\kkp)\,\brr_\kk$ is $\sigg$-positive of type $\aa\indi\nno$.
Then it is the product $\brrpos_\kk\cdot\dddd\indi\nn\cdot\,\brrneg_\kk$.
The relation $\dddd1\indi\,\dddd\indi\nno=\dddd1\nno$ implies that the braid $\ff\nn(\brbrneg_\kkp)\,\brr_\kk$ is equal to
\[
 \brrpos_\kk\,\dddd1\indi\inv\cdot\dddd1\nno\cdot\,\brrneg_\kk,
\]
where $\brrpos_\kk\,\dddd1\indi\inv$ is $\sig\nnt$-nonnegative and $\brrneg_\kk$ is $\aa1\nn$-dangerous.
Then $\ff\nn(\brbrneg_\kkp)\,\brr_\kk$ is $\sigg$-positive of type $\aa1\nno$.
Hence $\ff\nn(\brbrneg_\kkp)\,\br_\kk$ is $\sigg$-positive of type $\aa\nnt\nno$.

Assume finally $\br_\kk=\aa\nnt\nno$.
As the only $\aa\nnt\nno$-dangerous braid is trivial, the braid $\ff\nn(\brbrneg_\kkp)\,\br_\kk$ is
equal to $\aa\nnt\nno$, a $\sigg$-positive braid of type $\aa\nnt\nno$.

The same arguments establish the case $\kk=2$ with $\br_2\not=1$.
\end{proof}

\subsection{The Key Lemma}

We arrive at our main technical result. It mainly says that, if a braid~$\br$
of~$\BKL\nn$ has $\nn$-breadth~$\brdi$, then the braid $\sep\nn\brdit\inv\cdot
\br$ is either $\sigg$-positive or trivial. Actually, the result is stronger: the additional information is first that we can control the
type of the quotient above, and second that a similar result holds when we replace the
leftmost entry of the $\ff\nn$-splitting of~$\br$ with another braid of~$\BKL\nno$ that
resembles it enough. This stronger result, which unfortunately makes the statement more
complicated, will be needed in Section~\ref{S:ProofOfTheMainResult} for the final
induction on the braid index~$\nn$.

\begin{prop}
\label{P:KeyLemma}
Assume $\nn\ge3$ and that $(\br_\brdi,\Ldots,\br_1)$ is the $\ff\nn$-splitting of a braid~$\br$ in~$\BKL\nn$ with $\brdi\ge3$.
Let $\aa\indii\nn$ be the last letter of $\br\,\br_1\inv$.
Whenever $\brbr_\brdi$ is a  $\sigg$-positive braid of type $\last{\br}_\brdi$, the braid
\begin{equation}
\label{E:KeyLemma:Enonce}
\sep\nn\brdit\inv\cdot\ff\nn^{\brdio}(\brbr_\brdi)\cdot\ff\nn^{\brdit}(\br_\brdio)\cdot...\cdot\ff\nn(\br_2),
\end{equation}
is trivial or $\sigg$-positive of type $\aa\indii\nn$---the first case occurring only for
$\indii=1$.
\end{prop}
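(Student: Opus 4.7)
The plan is to induct on the breadth~$\brdi \ge 3$, using Proposition~\ref{P:DangerousAgainstStair} as the propagation engine and Lemma~\ref{L:TypeSigmaPos}(ii) as the translator from types in~$\BKL\nno$ to types in~$\BKL\nn$ via the rotation~$\ff\nn$.

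\emph{Base case $\brdi = 3$.} Using $\sep\nn1 = \ddd\nn\,\ddd\nno\inv$ from Proposition~\ref{P:Sep}(ii) and pushing $\ddd\nn\inv$ rightward through $\ff\nn^2(\brbr_3)\,\ff\nn(\br_2)$ via~\eqref{E:PushDdd}, I recast $E$ as $\ddd\nno\,\ff\nn(\brbr_3)\,\br_2\,\ddd\nn\inv$. Writing $\last{\br}_3 = \aa{\indi_3}{\nno}$ and $\last{\br}_2 = \aa{\indi_2}{\nno}$, and decomposing $\brbr_3 = \brbrpos_3\cdot\dddd{\indi_3}{\nno}\cdot\brbrneg_3$ according to its type, Proposition~\ref{P:DangerousAgainstStair} with $\kk=2$ (assuming first $\br_2\ne 1$) yields that $\ff\nn(\brbrneg_3)\,\br_2 = \xi_+\cdot\dddd{\indi_2}{\nno}\cdot\xi_-$ is $\sigg$-positive of type $\aa{\indi_2}{\nno}$ inside~$\BB\nno$. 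The identity $\dddd{\indi_3+1}{\nn}\cdot\ddd\nn\inv = \dddd{1}{\indi_3+1}\inv$ from~\eqref{E:RelationD:ii}, combined with one further application of~\eqref{E:PushDdd} to move $\ddd\nn\inv$ through~$\br_2$, rewrites
\[E = \ddd\nno\cdot\ff\nn(\brbrpos_3)\cdot\dddd{1}{\indi_3+1}\inv\cdot\ff\nn^2(\brbrneg_3)\cdot\ff\nn(\br_2).\]
Lemma~\ref{L:TypeSigmaPos}(ii) then lifts $\ff\nn^2(\brbrneg_3)\,\ff\nn(\br_2) = \ff\nn(\xi_+)\cdot\dddd{\indi_2+1}{\nn}\cdot\ff\nn(\xi_-)$ to a $\sigg$-positive braid of type $\aa{\indi_2+1}{\nn} = \aa\indii\nn$. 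Each of the four prefix factors $\ddd\nno$, $\ff\nn(\brbrpos_3)$, $\dddd{1}{\indi_3+1}\inv$, $\ff\nn(\xi_+)$ lies inside~$\BB\nno$ ($\ff\nn$ sends $\sig{\nn-3}$-nonnegative braids in~$\BKL\nno$ to $\sig\nno$-nonnegative ones, and $\dddd{1}{\indi_3+1}\inv \in \BB{\indi_3+1}\subset\BB\nno$), so their product is $\sig\nno$-nonnegative and Lemma~\ref{L:TypeSigmaPos}(iv) absorbs it on the left. The subcase $\br_2 = 1$ is handled separately: condition~\eqref{E:SplittingCondition} then forces $\last{\br}_3 = \aa\nnt\nno$ (otherwise $\ff\nn^2(\last{\br}_3) = \aa{1}{\indi_3+2}$ would lie in~$\BKL\nno$), whence $\indii = 1$ and the trivial alternative is available.

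\emph{Inductive step $\brdi \ge 4$.} Extracting one rotation from the product gives
\[\ff\nn^{\brdio}(\brbr_\brdi)\,\ff\nn^{\brdit}(\br_\brdio)\cdots\ff\nn(\br_2) = \ff\nn\bigl(\ff\nn^{\brdit}(\brbr_\brdi)\,\ff\nn^{\brdiii}(\br_\brdio)\cdots\br_2\bigr) =: \ff\nn(\brbr^\sharp).\]
The identity $\sep\nn{\brdit}\inv = \ddd\nno\cdot\sep\nn{\brdiii}\inv\cdot\ddd\nn\inv$ (a one-line check from $\sep\nn\brdi = \ddd\nn^{\brdi}\ddd\nno^{-\brdi}$), combined with~\eqref{E:PushDdd}, yields
\[E = \ddd\nno\cdot\sep\nn{\brdiii}\inv\cdot\brbr^\sharp\cdot\ddd\nn\inv.\]
A direct check of the splitting conditions~\eqref{E:SplittingCondition} shows that $(\br_\brdi,\br_\brdio,\ldots,\br_2)$ is the $\ff\nn$-splitting, of breadth~$\brdio$, of the braid $\ff\nn^{\brdit}(\br_\brdi)\cdots\br_2 \in \BKL\nn$. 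The induction hypothesis applied to this sub-splitting with outer factor $\brbr_\brdi$ gives that $\sep\nn{\brdiii}\inv\cdot\brbr^\sharp\cdot\br_2\inv = \eta_+\cdot\dddd{\indi_3+1}{\nn}\cdot\xi_-$ is $\sigg$-positive of type $\aa{\indi_3+1}{\nn}$, since $\ff\nn(\last{\br}_3) = \aa{\indi_3+1}{\nn}$ is the last letter of $\brbr^\sharp\,\br_2\inv$. Commuting $\ddd\nn\inv$ leftward through $\br_2$ and $\xi_-$ via~\eqref{E:PushDdd} and cancelling the middle via $\dddd{\indi_3+1}{\nn}\cdot\ddd\nn\inv = \dddd{1}{\indi_3+1}\inv$ gives
\[E = \ddd\nno\cdot\eta_+\cdot\dddd{1}{\indi_3+1}\inv\cdot\ff\nn(\xi_-)\cdot\ff\nn(\br_2).\]
The braid $\ff\nn\inv(\xi_-)$, being a product of $\dddd{\cdot}{\nnt}\inv$ factors with smallest index~$\indi_3$, is $\last{\br}_3$-dangerous in the $\BKL\nno$-sense, so Proposition~\ref{P:DangerousAgainstStair} with $\kk=2$ produces $\xi_-\,\br_2 = \ff\nn(\ff\nn\inv(\xi_-))\,\br_2$ as $\sigg$-positive of type $\aa{\indi_2}{\nno}$ inside~$\BB\nno$, and Lemma~\ref{L:TypeSigmaPos}(ii) lifts $\ff\nn(\xi_-)\,\ff\nn(\br_2) = \ff\nn(\xi_-\,\br_2)$ to a $\sigg$-positive braid of type $\aa{\indi_2+1}{\nn} = \aa\indii\nn$. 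The prefix $\ddd\nno\cdot\eta_+\cdot\dddd{1}{\indi_3+1}\inv$ is $\sig\nno$-nonnegative (being inside~$\BB\nno$), and Lemma~\ref{L:TypeSigmaPos}(iv) absorbs it on the left. The subcase $\br_2 = 1$ is treated as in the base case, producing $\indii = 1$.

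\emph{Main obstacle.} The principal difficulty is the constant translation between types $\aa\indi\nno$ used in~$\BKL\nno$ and types $\aa\indip\nn$ used in~$\BKL\nn$ through successive applications of~$\ff\nn$, together with the wraparound $\ff\nn(\aa\indi\nn) = \aa{1}{\indip}$ that must be tracked whenever indices reach the boundary value~$\nno$. The strengthening to allow an arbitrary $\sigg$-positive braid $\brbr_\brdi$ in place of $\br_\brdi$ is precisely what makes the induction close: the outer $\sigg$-positive factor produced by the inductive hypothesis must be absorbed at the outermost position of the next call rather than coinciding with a genuine splitting entry of~$\br$.
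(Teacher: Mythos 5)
Your plan is a genuine departure from the paper: you induct on the breadth~$\brdi$, re-interpreting the cut sequence $(\br_\brdi,\Ldots,\br_2)$ as a shorter $\ff\nn$-splitting and conjugating the whole expression by~$\ddd\nn$ via the identity $\sep\nn\brdit\inv=\ddd\nno\cdot\sep\nn\brdiii\inv\cdot\ddd\nn\inv$; the paper instead runs a single pass through the whole splitting using Lemma~5.9 of~\cite{F:SR} as a black box and performs one global case analysis. Your re-indexing step is correct (the conditions~\eqref{E:SplittingCondition} do shift down by one), and the commutation bookkeeping is sound. However, the argument has a genuine gap precisely at the configurations that the paper isolates as Cases~2--4.

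The gap is the treatment of entries equal to $\aa\nnt\nno$. Definition~\ref{D:Dangerous} gives a \emph{different} definition of ``$\sigg$-positive of type~$\aa\nno\nn$'' from the one for $\aa\indi\nn$ with $\indi\le\nnt$: it is not of the form $\brpos\cdot\dddd\indi\nn\cdot\brneg$ but instead $1$ or $\brr\cdot\aa\nno\nn$ with $\brr$ of type~$\aa1\nn$. You uniformly write the output of Proposition~\ref{P:DangerousAgainstStair}, as well as the output of the inductive hypothesis, in the form $\brpos\cdot\dddd{\cdot}{\cdot}\cdot\brneg$; this is invalid when $\last{\br}_2=\aa\nnt\nno$ (so $\br_2$ might equal $\aa\nnt\nno$) or when $\last{\br}_3=\aa\nnt\nno$ (so the IH gives you something of the special type~$\aa\nno\nn$ rather than $\eta_+\cdot\dddd{\indi_3+1}\nn\cdot\xi_-$). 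Correspondingly, Lemma~\ref{L:TypeSigmaPos}$(iv)$ cannot be invoked when $\ff\nn(\xi_-\,\br_2)=\aa\nno\nn$, which is exactly what happens when $\br_2=\aa\nnt\nno$ and $\xi_-=1$ (the ``moreover'' clause of Proposition~\ref{P:DangerousAgainstStair} permits $\brbrneg_\kkp\,\br_\kk=\aa\nnt\nno$ precisely when $\br_\kk=\aa\nnt\nno$). In that situation, proving that $E$ is $\sigg$-positive of type $\aa\nno\nn$ requires showing the $\sigg$-positive prefix is of type~$\aa1\nn$, not merely $\sig\nno$-nonnegative; that is a strictly stronger statement. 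The paper gets this from the explicit shape $\sep\nn\brdit\inv=\ddd\nno^{\brdi-2}\ddd\nn^{-\brdi+2}$ combined with the identity $\ff\nn(\aa\nnt\nno)\,\ddd\nn\inv=\ddd\nno\inv$, which collapses a chain of $\aa\nnt\nno$-entries into an explicit $\sigg$-positive braid of type~$\aa1\nn$. Nothing analogous appears in your induction.

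A second, smaller, gap: your handling of $\br_2=1$ only notes that the splitting conditions force $\last{\br}_3=\aa\nnt\nno$ and hence $\indii=1$, and then declares ``the trivial alternative is available.'' That is not a proof that $E$ is trivial or of type~$\aa1\nn$; an actual computation (as in the paper's Cases~2--4) is needed. Finally, the parenthetical ``(being inside $\BB\nno$)'' applied to $\ddd\nno\cdot\eta_+\cdot\dddd1{\indi_3+1}\inv$ is inaccurate since $\eta_+$ may be $\sig\nno$-positive rather than lying in $\BB\nno$; the conclusion (the prefix is $\sig\nno$-nonnegative) is still fine because products of $\sig\nno$-nonnegative braids are $\sig\nno$-nonnegative, but you should argue that rather than a containment in~$\BB\nno$.
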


\begin{proof}
Put $\br^*=\sep\nn\brdit\inv\cdot\ff\nn^{\brdio}(\brbr_\brdi)\cdot\ff\nn^{\brdit}(\br_\brdio)
\cdot...\cdot\ff\nn(\br_2)$ and $\aa\indio\nno = \last{\br}_\brdi$ (Lemma~\ref{L:LastLetter}). First, we decompose  the left fragment
$\sep\nn\brdit\inv\cdot\ff\nn^{\brdio}(\brbr_\brdi)$ of~$\br^*$ as a product of a
$\sig\nno$-nonnegative braid and a dangerous braid. By definition of a
$\sigg$-positive braid of type $\last{\br}_\brdi$, we have 
\[
\brbr_\brdi=\brbrpos_\brdi\,\dddd\indio\nno\,\brbrneg_\brdi, 
\]
where $\brbrneg_\brdi$ is an $\last{\br}_\brdi$-dangerous braid and where $\brbrpos_\brdi$ is $\sig\nnt$-nonnegative.
Using Proposition~\ref{P:Sep}$(ii)$t, we obtain
\begin{equation}
 \label{E:KeyLemma:2}
\sep\nn\brdit\inv\cdot\ff\nn^{\brdio}(\brbr_\brdi)=\ddd\nno^{\brdi-2}\,\ddd\nn^{-\brdi+2}\ff\nn^{\brdio}(\brbrpos_\brdi\,\dddd\indio\nno)\,\ff\nn^\brdio(\brbrneg_\brdi).
\end{equation}
By~\eqref{E:PushDdd}, we have
$\ddd\nn^{-\brdi+2}\ff\nn^{\brdio}(\brbrpos_\brdi\,\dddd\indio\nno)
= \ff\nn(\brbrpos_\brdi\,\dddd\indio\nno)\ddd\nn^{-\brdi+2}$.
Using the relation
$\dddd\indi\nn\,\ddd\nn\inv=\ddd\indi\inv$, an easy
consequence of~\eqref{E:RelationD:ii}, we obtain
\begin{equation}
\label{E:KeyLemma:3}
\ddd\nn^{-\brdi+2}\ff\nn^{\brdio}(\brbrpos_\brdi\,\dddd\indio\nno)=\ff\nn(\brbrpos_\brdi)\,\ddd\indi\inv\,\ddd\nn^{-\brdi+3}.
\end{equation}
Substituting \eqref{E:KeyLemma:3} in \eqref{E:KeyLemma:2}, we find
\begin{equation}
 \label{E:KeyLemma:4}
\sep\nn\brdit\inv\cdot\ff\nn^{\brdio}(\brbr_\brdi)=\ddd\nno^{\brdi-2}\,\ff\nn(\brbrpos_\brdi)\,\ddd\indi\inv\,\ddd\nn^{-\brdi+3}\,\ff\nn^\brdio(\brbrneg_\brdi).
\end{equation}
From there, we deduce that $\br^*$ is equal to
\begin{equation}
 \label{E:KeyLemma:5}
\ddd\nno^{\brdi-2}\,\ff\nn(\brbrpos_\brdi)\,\ddd\indi\inv\,\cdot\,\ddd\nn^{-\brdi+3}\,\ff\nn^\brdio(\brbrneg_\brdi)\,\ff\nn^\brdit(\br_\brdio)\,\Ldots\ff\nn(\br_2).
\end{equation}
Write $\br^{**}=\ddd\nn^{-\brdi+3}\,\ff\nn^\brdio(\brbrneg_\brdi)\,\ff\nn^\brdit(\br_\brdio)\,\Ldots\ff\nn(\br_2)$.
Note that  the left factor of \eqref{E:KeyLemma:5}, which is $\ddd\nno^{\brdi-2}\,\ff\nn(\brbrpos_\brdi)\,\ddd\indi\inv$, is $\sig\nno$-nonnegative.
At this point, four cases may occur.

\smallskip
\noindent{\bf Case 1:} $\br_2 \notin \{1, \aa\nnt\nno\}$.
By Lemma 5.9 of \cite{F:SR}, the braid $\br^{**}$ is equal to~$\brrr\,\ff\nn^2(\brbrneg_3)\,\ff\nn(\br_2)\,\br_1$, where $\brrr$ is a
$\sig\nno$-nonnegative braid and where $\brbrneg_3$ is a $\last{\br}_3$-dangerous braid. Put
$\brr_2 = \ff\nn(\brbrneg_3)\,\br_2$. By
Proposition~\ref{P:DangerousAgainstStair}, $\brr_2$ is $\sigg$-positive of type
$\last{\br}_2$ and different from $\aa\nnt\nno$. We deduce that $\br^*$ is equal to 
\begin{equation}
 \label{E:KeyLemma:6}
\ddd\nno^{\brdi-2}\,\ff\nn(\brbrr_\brdi)\,\ddd\indi\inv\,\brrr\,\cdot\,\ff\nn(\brr_2).
\end{equation}
The left factor of \eqref{E:KeyLemma:6} is $\sig\nno$-nonnegative, while the right factor, namely $\ff\nn(\brr_3)$, is different from~$\aa\nno\nn$ and $\sigg$-positive of type $\ff\nn(\last{\br}_2)$ by Lemma~\ref{L:TypeSigmaPos}$(ii)$.
As, in this case,  the last letter of $\br\,\br_1\inv$ is $\ff\nn(\last{\br}_2)$, we conclude using Lemma~\ref{L:TypeSigmaPos}$(iv)$.

\smallskip
\noindent{\bf Case 2:} $\br_2 \in \{1, \aa\nnt\nno\}$, $\br_3=\Ldots=\br_\kko=\aa\nnt\nno$ and $\br_\kk\not=\aa\nnt\nno$ for some $\kk\le\brdio$. If $\br_2$ is trivial, then the last letter of $\br\,\br_1\inv$ is $\aa1\nn$;  otherwise the last
letter of
$\br\,\br_1\inv$ is $\ff\nn(\aa\nnt\nno)$, \ie, $\aa\nno\nn$---a direct consequence of Lemma~\ref{L:LastLetter}. 
As the product of a $\sigg$-positive braid of type $\aa1\nn$ with $\aa\nno\nn$ is a $\sigg$-positive braid of type
$\aa\nno\nn$, it is enough to prove that the braid $\br^*$ is the product of a $\sigg$-positive braid of type~$\aa1\nn$ with $\ff\nn(\br_2)$.
By Lemma 5.9 of \cite{F:SR}, the braid $\br^**$ is equal to
\begin{equation}
 \label{E:KeyLemma:8}
 \brrr\,\ddd\nn^{-\kk+2}\ff\nn^\kk(\brbrneg_\kkp)\,\ff\nn^\kko(\br_\kk)\,\ff\nn^\kkt(\aa\nnt\nno)\,\Ldots\,\ff\nn^2(\aa\nnt\nno)\,\ff\nn(\br_2),
\end{equation}
with $\brrr$ a $\sig\nno$-nonnegative braid and $\brbrneg_\kkp$ a $\last{\br}_\kkp$-dangerous braid.
Proposition~\ref{P:DangerousAgainstStair} implies that the braid $\ff\nn(\brbrneg_\kkp)\,\br_\kk$ is $\sigg$-positive of type $\last{\br}_\kk$.
By Corollary 3.11 of~\cite{F:SR}, the last letter of $\br_\kk$ is $\aa\nnt\nno$.
Then, by Lemma~\ref{L:TypeSigmaPos}$(ii)$ the braid $\ff\nn^2(\brbr_\kkp)\,\ff\nn(\br_\kk)$ is $\sigg$-positive of type $\aa\nno\nn$.
Hence, by definition of a $\sigg$-positive braid of type~$\aa\nno\nn$, we have the relation
\begin{equation}
 \label{E:KeyLemma:9}
\ff\nn^2(\brbrneg_\kkp)\,\ff\nn(\br_\kk)=\brr_\kk\,\ff\nn(\aa\nnt\nno),
\end{equation}
where $\brr_\kk$ is a $\sigg$-positive braid of type $\aa1\nn$.
Substituting \eqref{E:KeyLemma:9} in \eqref{E:KeyLemma:8} gives that~$\br^{**}$ is equal to 
\[
\brrr\,\ddd\nn^{-\kk+2}\ff\nn^\kkt(\brr_\kk)\,\ff\nn^\kko(\aa\nnt\nno)\,\ff\nn^\kkt(\aa\nnt\nno)\,\Ldots\,\ff\nn^2(\aa\nnt\nno)\,\ff\nn(\br_2).
\]
Using $\ff\nn(\aa\nnt\nno)\,\ddd\nn\inv=\ddd\nno\inv$ and \eqref{E:PushDdd}, we obtain that the
right factor of \eqref{E:KeyLemma:5} is
\[
\label{E:KeyLemma:11}
\brrr\,\brr_\kk\,\ddd\nno^{-\kk+2}\,\ff\nn(\br_2).
\]
As $\brr_\kk$ is a $\sigg$-positive braid of type $\aa1\nn$, Lemma~\ref{L:TypeSigmaPos}$(iii)$ implies that $\brr_\kk\,\ddd\nno^{-\kk+2}$ is $\sigg$-positive of type $\aa1\nn$, and so is $\brrr\brr_\kk\,\ddd\nno^{-\kk+2}$ by Lemma~\ref{L:TypeSigmaPos}$(iv)$.
Hence, by~\eqref{E:KeyLemma:5}, the braid $\br^*$ is the product of a $\sigg$-positive braid of type $\aa1\nn$ with $\ff\nn(\br_2)$.

\smallskip
\noindent{\bf Case 3:} $\br_2 \in \{1, \aa\nnt\nno\}$, $\br_3 =\Ldots=\br_\brdio=\aa\nnt\nno$ and $\brbr_\brdi \not= \aa\nnt\nno$. As in Case~$2$, it is enough to prove that the
braid~$\br^*$ is the product  of a $\sigg$-positive of  type~$\aa1\nn$ with $\ff\nn(\br_2)$.
Using Lemma~\ref{P:Sep}$(ii)$ and \eqref{E:PushDdd} in its definition, the braid~$\br^*$ is equal to
\[
\ddd\nno^{\brdi-2}\cdot\ff\nn(\brbr_\brdi)\,\ddd\nn\inv\cdot\ff\nn(\aa\nnt\nno)\,\ddd\nn\inv\cdot\,\Ldots\cdot\ff\nn(\aa\nnt\nno)\,\ddd\nn\inv\,\ff\nn(\br_2).
\]By Corollary 3.11 of \cite{F:SR}, the last letter of $\br_\brdi$ is $\aa\nnt\nno$, so 
$\brbr_\brdi$ is $\sigg$-positive of type~$\aa\nnt\nno$. Hence  $\ff\nn(\brbr_\brdi)$
is $\sigg$-positive of type $\aa\nno\nn$ and is different from~$\aa\nno\nn$. Then, by definition of a  $\sigg$-positive braid of type $\aa\nno\nn$, there exists a 
$\sigg$-positive braid $\brr_\brdi$ of type $\aa1\nn$ satisfying $\ff\nn(\brbr_\brdi)=\brr_\brdi\,\aa\nno\nn$.
Using $\ff\nn(\aa\nnt\nno)\,\ddd\nn\inv=\ddd\nno\inv$, we deduce that the braid $\br^*$ is equal to
\begin{equation}
 \label{E:FinCas3}
\ddd\nno^{\brdi-2}\cdot\brr_\brdi\ \ddd\nno^{-\brdi+2}\cdot\ff\nn(\br_2).
\end{equation}
By Lemma~\ref{L:TypeSigmaPos}$(iii)$, the middle factor of \eqref{E:FinCas3}, namely $\brr_\brdi\ \ddd\nno^{-\brdi+2}$, is $\sigg$-positive of type $\aa1\nn$.
Then  $\br^*$ is the product of a $\sigg$-positive braid of type~$\aa1\nn$ with~$\ff\nn(\br_2)$.

\smallskip
\noindent{\bf Case 4:} $\br_2 \in \{1, \aa\nnt\nno\}$, $\br_3 =\Ldots=\br_\brdio=\aa\nnt\nno$ and $\brbr_\brdi =\aa\nnt\nno$.
By definition, we have
\begin{equation}
\label{E:KeyLemma:15}
\br^*=\ddd\nno^{\brdi-2}\cdot\ff\nn(\aa\nnt\nno)\,\ddd\nn\inv\,\Ldots\,\ff\nn(\aa\nnt\nno)\,\ddd\nn\inv\,\ff\nn(\br_2).
\end{equation}
Using $\ff\nn(\aa\nnt\nno)\,\ddd\nn\inv=\ddd\nno\inv$ once again, we deduce
$\br^* = \ff\nn(\br_2)$. The  braid $\ff\nn(\br_2)$ is either trivial or equal to $\aa\nno\nn$, a
$\sigg$-positive word of type $\aa\nno\nn$, as expected.

So the proof of the Key Lemma is complete.
\end{proof}

\section{Proof of the main result}
\label{S:ProofOfTheMainResult}

We are now ready to prove Theorem~\ref{T:coinOrdre}, which state that if $\br, \brbr$ are braids of~$\BKL\nn$, then
\begin{equation}
  \label{E:Main}
  \text{$\br \SLL \brbr$ \quad implies \quad $\br < \brbr$,}
\end{equation}
where $\SLL$ refers to the ordering of Definition~\ref{D:SLn} and $<$ refers to the
Dehornoy ordering, \ie, $\br < \brbr$ means that the quotient-braid~$\br\inv \brbr$ is
$\sigg$-positive.

We shall split the argument into three steps.
The first step consists in replacing the initial problem that involves
two arbitrary braids~$\br, \brbr$ with two problems, each of which only
involves one braid. To do that, we use the
separators~$\sep\nn\tt$ of Definition~\ref{D:Sep}, and address the problem of comparing
one arbitrary braid with the special braids~$\sep\nn\tt$. We shall prove that
\begin{gather}
  \label{E:Main1}
  \text{$\br \SLL \sep\nn\tt$ \quad implies \quad $\br < \sep\nn\tt$}\\
  \label{E:Main2}
  \text{$\sep\nn\tt \SLLe \br$ \quad implies \quad $\sep\nn\tt\le\br$}
\end{gather}
So, essentially, we have three things to do: proving~\eqref{E:Main1},
proving~\eqref{E:Main2}, and showing how to deduce the general
implication~\eqref{E:Main}. 

\subsection{Proofs of~\eqref{E:Main1} and~\eqref{E:Main2}}

We begin with the implication~\eqref{E:Main1}. Actually, we shall prove a stronger
result, needed to maintain an inductive argument in the proof of
Theorem~\ref{T:coinOrdre}.

\begin{prop}
 \label{P:Upper}
For $\nn\ge3$, the implication~\eqref{E:Main1} is true.
Moreover for $\tt\ge1$, the relation $\br\SLL\sep\nn\tt$ implies that $\br\inv\,\sep\nn\tt$ is $\sigg$-positive of type $\aa1\nn$.
\end{prop}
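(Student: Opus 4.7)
The plan is to prove the stronger ``Moreover'' form by induction on the $\nn$-breadth~$\brdi$ of $\br$; the weaker implication~\eqref{E:Main1} in the residual case $\tt=0$ follows separately, since $\sep\nn0=\aa\nno\nn$ and $\br\SLL\aa\nno\nn$ forces $\br\in\BKL\nno$ by Proposition~\ref{P:Sep2}, so $\br\inv\sep\nn0=\br\inv\sig\nno$ is $\sig\nno$-positive (as $\br\inv\in\BB\nno$ involves no $\sigpm\nno$). For $\tt\ge 1$, Proposition~\ref{P:Sep}$(iii)$ ensures $\brdi\le\tt+1$, and Proposition~\ref{P:Sep}$(ii)$ allows us to rewrite $\sep\nn\tt=\ddd\nn^\tt\ddd\nno^{-\tt}$.

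The base cases $\brdi\in\{0,1\}$ are direct. One factors
\[
\br\inv\sep\nn\tt=(\br\inv\ddd\nn^{\tt-1})\cdot\dddd1\nn\cdot\ddd\nno^{-\tt},
\]
whose left factor is $\sig\nno$-nonnegative because $\br\inv\in\BB\nno$ is $\sig\nno$-free and $\ddd\nn^{\tt-1}$ is either trivial or $\sig\nno$-positive, and whose right factor $\ddd\nno^{-\tt}=\dddd1\nno^{-\tt}$ is $\aa1\nn$-dangerous.

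For the main step $\brdi\ge 3$, I would invoke the Key Lemma (Proposition~\ref{P:KeyLemma}) with the witness $\brbr_\brdi=\br_\brdi$. This choice is legitimate because Lemma~\ref{L:LastLetter}$(i)$ gives $\last{\br}_\brdi=\aa\indi\nno$ for some $\indi$, so Lemma~\ref{L:SigsPositive} applied inside $\BKL\nno$ yields that $\br_\brdi$ is itself $\sigg$-positive of type $\last{\br}_\brdi$. The Key Lemma then provides a factorization $\br=\sep\nn\brdit\cdot\brr\cdot\br_1$ with $\brr$ either trivial or $\sigg$-positive of type $\aa\indii\nn$ for some $\indii$. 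Together with the identity $\sep\nn\brdit\inv\sep\nn\tt=\ddd\nno^\brdit\cdot\ddd\nn^{\tt-\brdit}\cdot\ddd\nno^{-\tt}$ (immediate from Proposition~\ref{P:Sep}$(ii)$), this rewrites
\[
\br\inv\sep\nn\tt=\br_1\inv\,\brr\inv\,\ddd\nno^\brdit\,\ddd\nn^{\tt-\brdit-1}\cdot\dddd1\nn\cdot\ddd\nno^{-\tt},
\]
and the proof reduces to checking that the left product is $\sig\nno$-nonnegative. The missing subcase $\brdi=2$ is handled directly: one writes $\br=\ff\nn(\br_2)\br_1$ and uses Lemma~\ref{L:SigsPositive} inside $\BKL\nno$ combined with Lemma~\ref{L:TypeSigmaPos}$(ii)$ to recast $\ff\nn(\br_2)$ as a $\sigg$-positive braid of type $\aa\indip\nn$ in $\BKL\nn$, then proceeds analogously.

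The main obstacle is the control of $\brr\inv$ when $\brr$ is non-trivially $\sigg$-positive of type $\aa\indii\nn$: by Definition~\ref{D:Dangerous}, $\brr\inv$ then contains a $\dddd\indii\nn\inv$ (or $\aa\nno\nn\inv$) block that is genuinely $\sig\nno$-negative, and this negativity must be absorbed by the positive buffer $\ddd\nno^\brdit\cdot\ddd\nn^{\tt-\brdit-1}$. The available $\sig\nno$-positive content grows precisely with the margin $\tt+1-\brdi\ge 0$ guaranteed by Proposition~\ref{P:Sep}$(iii)$, and the actual cancellation should be argued from the dangerous structure of the negative part of $\brr$, by manipulations analogous to those in the proof of the Key Lemma (Proposition~\ref{P:KeyLemma}) itself — this is the delicate technical step.
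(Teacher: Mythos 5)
Your plan departs from the paper's at the crucial step: the paper does \emph{not} invoke the Key Lemma for Proposition~\ref{P:Upper}; it reserves the Key Lemma for Proposition~\ref{P:Lower}. For~\eqref{E:Main1}, the paper instead directly unrolls $\br\inv$ along the $\ff\nn$-splitting, then uses Relation~\eqref{E:PushDdd} to dispatch $\brdio$ copies of~$\ddd\nn$ from $\ddd\nn^\tt$ between the factors, obtaining $\br_1\inv\,\ddd\nn\,\br_2\inv\cdots\ddd\nn\,\br_\brdi\inv\cdot\ddd\nn^{\tt-\brdi+1}\ddd\nno^{-\tt}$. The point is that pushing the $\ddd\nn$'s undoes the rotations, so each $\br_\kk\inv$ appears as a genuine element of $\BB\nno$, hence is $\sig\nno$-free. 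A final Garside-fraction trick ($\br_\brdi\inv\ddd\nno^\kk\in\BKL\nno$) then repackages the last piece so that the tail is explicitly $\sigg$-positive of type $\aa1\nn$. No Key Lemma is needed.

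There is a genuine gap in your alternative route. After applying the Key Lemma with $\brbr_\brdi=\br_\brdi$, you correctly obtain $\br=\sep\nn\brdit\cdot\brr\cdot\br_1$ with $\brr$ trivial or $\sigg$-positive of type $\aa\indii\nn$, and hence
\[
\br\inv\sep\nn\tt \;=\; \br_1\inv\,\brr\inv\,\ddd\nno^\brdit\,\ddd\nn^{\tt-\brdit-1}\cdot\dddd1\nn\cdot\ddd\nno^{-\tt}.
\]
You then want the factor $\br_1\inv\,\brr\inv\,\ddd\nno^\brdit\,\ddd\nn^{\tt-\brdit-1}$ to be $\sig\nno$-nonnegative. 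But by Lemma~\ref{L:TypeSigmaPos}$(i)$, $\brr$ is $\sig\nno$-positive whenever it is nontrivial, so $\brr\inv$ is $\sig\nno$-negative, and in fact arbitrarily so: $\brr$ depends on $\br$ and its $\sig\nno$-content is unbounded, since the entries $\br_\kk$ of the $\ff\nn$-splitting can be arbitrarily large. Meanwhile the only $\sig\nno$-positive buffer at your disposal is $\ddd\nn^{\tt-\brdit-1}=\ddd\nn^{\tt-\brdi+1}$, a \emph{fixed} power which in the extremal admissible case $\brdi=\tt+1$ is the trivial braid $1$. So the ``available $\sig\nno$-positive content'' does not grow in the way you hope, and the claimed absorption has no room to happen. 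The remark that the cancellation ``should be argued from the dangerous structure of the negative part of $\brr$'' does not repair this: writing $\brr=\brpos\,\dddd\indii\nn\,\brneg$, the factor $\brpos$ is only required to be $\sig\nno$-nonnegative, so $\brpos\inv$ can itself be arbitrarily $\sig\nno$-negative and the ``dangerous'' block $\brneg\inv$ gives you no leverage over it. In effect, proving that $\brr\inv\,\sep\nn\brdit\inv\sep\nn\tt$ is $\sig\nno$-nonnegative from the mere positivity of $\brr$ is essentially the statement you set out to prove. The remedy is the paper's direct computation, which never forms $\brr\inv$ at all, but instead exhibits each $\br_\kk\inv$ in its $\sig\nno$-free form after dispatching the $\ddd\nn$'s.
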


\begin{proof}
Take $\br$ in $\BKL\nn$ and assume $\br\SLL\sep\nn\tt$ for some $\tt\ge0$.
Let $(\br_\brdi,\Ldots, \br_1)$ be the $\ff\nn$-splitting of~$\br$.
By Proposition~\ref{P:Sep}$(iii)$, we necessarily have $\brdi\le\ttp$.
If $\tt=0$ holds, then the braid $\br$ lies in $\BKL\nno$ and the quotient~$\br\inv\sep\nn0$, which is $\br\inv\,\aa\nno\nn$, is $\sigg$-positive.
If $\tt\ge1$ and $\brdi\le1$ hold, then the braid $\br\inv$ is $\sig\nno$-nonnegative, and as $\sep\nn\tt$ is $\sigg$-positive of type $\aa1\nn$, Lemma~\ref{L:TypeSigmaPos} implies that the quotient~$\br\inv\sep\nn\tt$ is $\sigg$-positive of type $\aa1\nn$.
Assume now $\tt\ge1$ and $\brdi\ge2$.
Then, by Proposition~\ref{P:Sep}$(ii)$, we find 
\begin{equation*}
\br\inv\sep\nn\tt = \br\inv\,\ddd\nn^\tt\,\ddd\nno^{-\tt}=
\br_1\inv \cdot \ff\nn(\br_2\inv)\ \Ldots\ \ff\nn^\brdio(\br_\brdi\inv) \cdot \ddd\nn^\tt \cdot\ddd\nno^{-\tt}.
\end{equation*} 
Using Relation \eqref{E:PushDdd}, we push $\brdio$~factors $\ddd\nn$ to the 
left and dispatch them between the factors~$\br_\kk\inv$:
\begin{align*}
\br\inv \sep\nn\tt &= \br_1\inv \, \ff\nn(\br_2\inv) \ \Ldots\ \ff\nn^\brdit(\br_\brdio\inv)\, \ff\nn^\brdio(\br_\brdi\inv)\, \ddd\nn^\brdio \ \ddd\nn^{\tt-\brdip}\ \ddd\nno^{-\tt}\\
&= \br_1\inv \, \ff\nn(\br_2\inv) \ \Ldots\ \ff\nn^\brdit(\br_\brdio\inv) \ \ddd\nn^\brdit\  \ddd\nn\, \br_\brdi\inv \ \ddd\nn^{\tt-\brdip}\ \ddd\nno^{-\tt}\\
&...\\
&= \br_1\inv \ \ddd\nn\,\br_2\inv\ \Ldots\ \ddd\nn\,\br_\brdio\inv \ \ddd\nn\,\br_\brdi\inv\ \ddd\nn^{\tt-\brdip}\ \ddd\nno^{-\tt}.
\end{align*}
As $\BKL\nno$ is a Garside monoid, there exists an integer $\kk$ such that
$\br_\brdi\inv\ddd\nno^\kk$ belongs to~$\BKL\nno$. Call the latter
braid~$\brr_\brdi$. Thus, we have 
$\ddd\nn\,\br_\brdi\inv=\ddd\nn\,\brr_\brdi\,\ddd\nno^{-\kk}$. Relation~\eqref{E:PushDdd} implies $\ddd\nn\,\brr_\brdi\,\ddd\nno^{-\kk}=\ff\nn(\brr_\brdi)\,\ddd\nn\,\ddd\nno^{-\kk}$. Then the braid $\br\inv\,\sep\nn\tt$ is equal to
\begin{equation}
\label{E:PUpper:1}
\br_1\inv\ \ddd\nn\,\br_2\inv\ \Ldots\ \ddd\nn\,\br_\brdio\inv \,\ff\nn(\brr_\brdi)\,\cdot\,\ddd\nn\,\ddd\nno^{-\kk}\,
\ddd\nn^{\tt-\brdip}\,\ddd\nno^{-\tt}.
\end{equation}
Each braid~$\br_\kk$ belongs to~$\BKL\nno$, and so its inverse~$\br_\kk\inv$ does not involve the $\nn$th strand.
Hence the left factor of \eqref{E:PUpper:1}, namely $\br_1\inv\ \ddd\nn\,\br_2\inv\ \Ldots\ \ddd\nn\,\br_\brdio\inv \,\ff\nn(\brr_\brdi)$, is $\sig\nno$-nonnegative. If $\brdi=\ttp$ holds, the right  factor of \eqref{E:PUpper:1}, namely $\ddd\nn\,\ddd\nno^{-\kk}\,\ddd\nn^{\tt-\brdip}\,\ddd\nno^{-\tt}$,  is equal to the braid $\ddd\nn\cdot\ddd\nno^{-\tt-\kk}$, which shows it is a $\sigg$-positive braid of type~$\aa1\nn$. 
If $\brdi\le\tt$ holds, \eqref{E:PUpper:1} ends with
$\ddd\nn\,\ddd\nno^{-\tt}$, which is a $\sigg$-positive braid of type $\aa1\nn$, and the
factor~$\ddd\nn\,\ddd\nno^{-\kk}\ddd\nn^{\tt-\brdi}$ is
$\sig\nno$-nonnegative. In each case, we conclude
using Lemma~\ref{L:TypeSigmaPos}$(iii)$ that  $\br\inv\sep\nn\tt$ is $\sigg$-positive of
type~$\aa1\nn$.
\end{proof}

Using the Key Lemma of Section~\ref{S:KeyLemma}, \ie, Proposition~\ref{P:KeyLemma}, we now establish  the implication~\eqref{E:Main2}.

\begin{prop}
\label{P:Lower}
For $\nn\ge 3$, the implication~\eqref{E:Main2} is true.
\end{prop}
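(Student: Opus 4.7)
The plan is to reduce the comparison $\sep\nn\tt \SLLe \br$ to a comparison of~$\br$ with $\sep\nn\brdit$, where $\brdi$ denotes the $\nn$-breadth of~$\br$, and then invoke the Key Lemma (Proposition~\ref{P:KeyLemma}). Let $(\br_\brdi,\Ldots,\br_1)$ be the $\ff\nn$-splitting of~$\br$. The hypothesis $\sep\nn\tt \SLLe \br$ precludes $\br \in \BKL\nno$, since otherwise $\br \SLL \aa\nno\nn = \sep\nn0 \SLLe \sep\nn\tt$. Hence $\brdi \ge 2$ and $\brdit \ge 0$, so $\sep\nn\brdit$ is defined (with the extended convention $\sep\nn0 = \aa\nno\nn$). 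Since $\sep\nn\brdit$ is the smallest braid of $\nn$-breadth~$\brdi$ (Proposition~\ref{P:Sep}$(iii)$), the hypothesis forces $\brdi \ge \ttpp$, i.e., $\tt \le \brdit$, so that Lemma~\ref{L:OrderSep} yields $\sep\nn\tt \le \sep\nn\brdit$. By transitivity of~$<$ (Lemma~\ref{L:Transitive}), it suffices to prove $\sep\nn\brdit \le \br$.

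For the main case $\brdi \ge 3$, apply the Key Lemma with $\brbr_\brdi = \br_\brdi$. This is licit because Lemma~\ref{L:LastLetter}$(i)$ makes the last letter of~$\br_\brdi$ of the form $\aa{..}\nno$, so Lemma~\ref{L:SigsPositive} applied in~$\BKL\nno$ makes~$\br_\brdi$ a $\sigg$-positive braid of type $\last{\br}_\brdi$. The Key Lemma then says that
\[
\sep\nn\brdit\inv \cdot \br \cdot \br_1\inv = \sep\nn\brdit\inv \cdot \ff\nn^\brdio(\br_\brdi) \cdot \Ldots \cdot \ff\nn(\br_2)
\]
is trivial or $\sigg$-positive of some type $\aa\indii\nn$. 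In the former case $\sep\nn\brdit\inv \br$ equals $\br_1 \in \BKL\nno$, which is trivial or a nontrivial positive braid (hence $\sigg$-positive). In the latter case, Lemma~\ref{L:TypeSigmaPos}$(i)$ makes the quotient $\sig\nno$-positive; concatenating any $\sig\nno$-positive representation with any positive representation of~$\br_1$ in the letters $\sig1,\Ldots,\sig\nnt$ (which exists since $\BKL\nno$ is generated by braids $\aa\indi\indii$ with $\indii\le\nno$, each a word in $\sig1,\Ldots,\sig\nnt$) produces a $\sig\nno$-positive representation of $\sep\nn\brdit\inv \br$. In both cases $\sep\nn\brdit \le \br$.

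The edge case $\brdi = 2$ falls outside the Key Lemma, but here $\brdit = 0$ forces $\tt = 0$ and $\sep\nn\brdit = \aa\nno\nn$, and the argument proceeds directly. We have $\br = \ff\nn(\br_2) \cdot \br_1$ with $\br_2 \not= 1$; Lemma~\ref{L:LastLetter}$(i)$ gives the last letter of~$\br_2$ as $\aa\indi\nno$, so the last letter of $\ff\nn(\br_2)$ is $\aa\indip\nn$, and Lemma~\ref{L:SigsPositive} makes $\ff\nn(\br_2)$ a $\sigg$-positive braid of type $\aa\indip\nn$. Extracting the factor $\aa\nno\nn$ from the decomposition $\dddd\indip\nn = \dddd\indip\nno \cdot \aa\nno\nn$ and commuting via the braid relations (whose sole non-trivial interaction is $\sig\nno \sig\nnt \sig\nno = \sig\nnt \sig\nno \sig\nnt$) produces a $\sig\nno$-positive representation of $\aa\nno\nn\inv \cdot \br$. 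The main technical obstacle throughout is the combining step: one must verify that right-multiplication by~$\br_1$ preserves $\sig\nno$-positivity, which holds precisely because elements of~$\BKL\nno$ admit $\sig\nno$-free positive representations.
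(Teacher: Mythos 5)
Your main case ($\brdi\ge 3$) follows exactly the paper's argument: reduce to showing $\sep\nn\brdit\le\br$ via Lemma~\ref{L:OrderSep} and transitivity (Lemma~\ref{L:Transitive}), invoke Lemma~\ref{L:SigsPositive} to see that $\br_\brdi$ is $\sigg$-positive of type $\last{\br}_\brdi$, and apply the Key Lemma with $\brbr_\brdi=\br_\brdi$. You are in fact more explicit than the paper in one spot: you note that the Key Lemma only controls $\sep\nn\brdit\inv\,\br\,\br_1\inv$ and that one must re-attach the factor $\br_1$, which works because $\br_1\in\BKL\nno\subset\BB\nno$ admits a $\sig\nno$-free word. (A small slip: $\br_1$ need not have a \emph{positive} word in the Artin generators $\sig1,\Ldots,\sig\nnt$, only a $\sig\nno$-free one; that is all you use.) The paper leaves this inference tacit.

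You are also right that the Key Lemma is stated only for $\brdi\ge3$, so the case $\brdi=2$ (which forces $\tt=0$) needs separate treatment --- and the paper's own proof of this Proposition glosses over that case. However, your patch for $\brdi=2$ does not hold up. After extracting $\aa\nno\nn$ from $\dddd\indip\nn=\dddd\indip\nno\cdot\aa\nno\nn$ you are looking at $\siginv\nno\cdot\brpos\,\dddd\indip\nno\cdot\sig\nno\cdot\brneg\,\br_1$. The pattern $\siginv\nno\,\uu\,\sig\nno$ with $\uu\in\BB\nno$ is not $\sig\nno$-positive or $\sig\nno$-nonnegative on its face, and conjugation by $\sig\nno$ does \emph{not} preserve $\BB\nno$; a vague appeal to $\sig\nno\sig\nnt\sig\nno=\sig\nnt\sig\nno\sig\nnt$ does not explain how the offending $\siginv\nno$ is eliminated. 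Moreover, the sub-case where $\ff\nn(\br_2)$ has last letter $\aa\nno\nn$ is governed by the different Definition~\ref{D:Dangerous} clause for type~$\aa\nno\nn$, which your $\dddd\indip\nn$-factoring does not even address. A sound treatment of $\brdi=2$ must instead descend one level: from $\br_2\not=1$ and Lemma~\ref{L:LastLetter} one gets $\aa\nnt\nno\SLLe\br_2$ in $\BKL\nno$; then the induction on~$\nn$ (as organized in Proposition~\ref{P:MainBis}) gives that $\aa\nnt\nno\inv\br_2$ is $\sigg$-positive of type $\last{\br_2}$, and applying $\ff\nn$ together with Lemma~\ref{L:TypeSigmaPos}$(ii)$ yields the result for $\aa\nno\nn\inv\,\ff\nn(\br_2)$ and hence for $\aa\nno\nn\inv\,\br$.
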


\begin{proof}
Take $\br$ in $\BKL\nn$ and assume $\sep\nn\tt \SLLe\br$.  Let
$(\br_\brdi,\Ldots,\br_1)$ be the $\ff\nn$-splitting of~$\br$. By definition of $\SLL$,
the relation $\sep\nn\tt\SLLe\br$ implies $\tt \le \brdit$. Then $\sep\nn\tt\inv\,\br$ is equal to
\begin{equation}
\label{E:C:Lower}
\sep\nn\tt\inv\,\sep\nn\brdit\cdot\sep\nn\brdit\inv\br.
\end{equation}
By Lemma~\ref{L:OrderSep}, the factor $\sep\nn\tt\inv\,\sep\nn\brdit$ of~\eqref{E:C:Lower} is $\sigg$-positive or trivial.
By Lemma~\ref{L:SigsPositive},  the braid $\br_\brdi$ is $\sigg$-positive of type $\last{\br}_\brdi$.
Then, Proposition~\ref{P:KeyLemma} guarantees that the right factor
of~\eqref{E:C:Lower}, namely $\sep\nn\brdit\inv\,\br$, is $\sigg$-positive or trivial.
\end{proof}

\subsection{Proof of Theorem~\ref{T:coinOrdre}}

At this point, we know that the implications~\eqref{E:Main1} and~\eqref{E:Main2}
are true. It is not hard to deduce that the implication~\eqref{E:Main}, which is our
goal, is true when the breadth of~$\br$ is smaller than the breadth
of~$\brbr$, \ie, in the ``$\mathtt{Short}$''-case of the $\ShortLex$-ordering.

So, there remains to treat the ``$\mathtt{Lex}$''-case, \ie, the case
when $\br$ and $\brbr$ have the same $\nn$-breadth, and this is what we do
now. Actually, as was already mentioned, in
order to maintain an induction hypothesis, we shall prove a stronger
implication: instead of merely proving that the quotient braid~$\br\inv
\brbr$ is $\sigg$-positive, we shall prove the more precise conclusion
that $\br\inv \brbr$ is $\sigg$-positive of type $\aa\indi\nn$ for some~$\indi$ related
with the last letter of~$\brbr$.  That is why we shall consider the
``$\mathtt{Short}$''- and the ``$\mathtt{Lex}$''-cases simultaneously. 

\begin{prop}
\label{P:MainBis}
If $\br$ and $\brbr$ are nontrivial braids of~$\BKL\nn$, the relation $\br\SLL \brbr$
implies $\br<\nobreak \brbr$. Moreover, if the $\BKL\nno$-tail of $\brbr$
is trivial, then the braid~$\br\inv\brbr$ is $\sigg$-positive of type $\last{\brbr}$.
\end{prop}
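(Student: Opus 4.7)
The plan is to prove both conclusions of Proposition~\ref{P:MainBis} simultaneously by induction on the strand number $\nn$, the refined ``type $\last{\brbr}$'' clause being indispensable to sustain the recursion. The base case $\nn = 2$ is immediate since $(\BKL2,\SLL)$ is order-isomorphic to $(\mathbb{N},<)$ via $\aa12^{\brdi}\mapsto\brdi$, and $\br\inv\brbr$ is then a positive power of $\sig1 = \aa12$.

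For $\nn \ge 3$, I write the $\ff\nn$-splittings $(\br_\brdi,\Ldots,\br_1)$ and $(\brbr_\brdii,\Ldots,\brbr_1)$ and dispose of the two cases of the $\ShortLex$ comparison. In the $\mathtt{Short}$ case $\brdi < \brdii$, the separator $\sep\nn\brdiit$---the $\SLL$-smallest braid of breadth $\brdii$ by Proposition~\ref{P:Sep}$(iii)$---yields $\br \SLL \sep\nn\brdiit \SLLe \brbr$, so
\[
\br\inv\brbr = \bigl(\br\inv\,\sep\nn\brdiit\bigr)\bigl(\sep\nn\brdiit\inv\,\brbr\bigr).
\]
Proposition~\ref{P:Upper} renders the left factor $\sig\nno$-nonnegative (explicitly of type $\aa1\nn$ when $\brdii \ge 3$, and of the form $\br\inv\aa\nno\nn$ with $\br\inv \in \BB\nno$ in the boundary case $\brdii = 2$). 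Proposition~\ref{P:KeyLemma} applied to $\brbr$ with the admissible replacement leading factor $\brbr_\brdii$ (legitimate by Lemma~\ref{L:SigsPositive}) makes the right factor $\sigg$-positive of type $\last{\brbr\,\brbr_1\inv}$, which equals $\last{\brbr}$ under the moreover hypothesis $\brbr_1 = 1$. Lemma~\ref{L:TypeSigmaPos}$(iv)$ propagates the type through the product; right-multiplication by $\brbr_1 \in \BKL\nno$ preserves bare $\sigg$-positivity in the general case.

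In the $\mathtt{Lex}$ case $\brdi = \brdii$, let $\tt$ be the largest index with $\br_\tt \ne \brbr_\tt$. Since $\br_\kk = \brbr_\kk$ for $\kk > \tt$, the matching top factors cancel and $\br\inv\brbr = \bar\br\inv\bar\brbr$ with $\bar\br = \ff\nn^{\tto}(\br_\tt)\cdots\br_1$ and $\bar\brbr = \ff\nn^{\tto}(\brbr_\tt)\cdots\brbr_1$; the elementary fact that any $\BKL\nno$-right-divisor of a right factor of a product is a right-divisor of the whole shows that $(\br_\tt,\Ldots,\br_1)$ and $(\brbr_\tt,\Ldots,\brbr_1)$ remain genuine $\ff\nn$-splittings (up to trimming a trivial leading entry). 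The subcase $\tt = 1$ collapses to $\br_1\inv\brbr_1$ in $\BKL\nno$, handled by the induction hypothesis and incompatible with $\brbr_1 = 1$. For $\tt \ge 2$, the induction hypothesis applied to $\br_\tt \SLL \brbr_\tt$ in $\BKL\nno$ gives that $\gamma = \br_\tt\inv\brbr_\tt$ is $\sigg$-positive; since Lemma~\ref{L:LastLetter}$(i)$ forces $\last{\brbr_\tt}$ to have the form $\aa{..}\nno$, the $\BKL\nnt$-tail of $\brbr_\tt$ is trivial and the refined hypothesis upgrades $\gamma$ to type $\last{\brbr_\tt}$. The edge $\tt = 2$ with $\brbr_1 = 1$ reduces directly to $\bar\br\inv\bar\brbr = \ff\nn(\gamma)$, whose type is $\ff\nn(\last{\brbr_\tt}) = \last{\brbr}$ by Lemma~\ref{L:TypeSigmaPos}$(ii)$. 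For $\tt \ge 3$, I apply Proposition~\ref{P:KeyLemma} to $\bar\brbr$ with $\gamma$ as the replacement leading factor, combined with a further $\mathtt{Short}$-style separator argument on $\bar\br$, to extract $\sigg$-positivity of type $\last{\bar\brbr} = \last{\brbr}$.

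The main obstacle is the type bookkeeping in the $\mathtt{Lex}$ case for $\tt \ge 3$: Lemma~\ref{L:TypeSigmaPos}$(ii)$ handles only a single rotation $\BKL\nno \to \BKL\nn$, so iterating $\ff\nn$ directly destroys the ``type $\aa\indi\nn$'' structure, and the Key Lemma must be invoked to re-establish it through the combinatorial control afforded by the replacement leading factor. Without the strengthened type clause in the induction hypothesis, the Key Lemma's type hypothesis on $\gamma$ would be unverifiable---one would only propagate $\sig\jj$-positivity for some uncontrolled $\jj$, breaking the link to $\last{\brbr}$. This is exactly why the stronger statement must be proved alongside the weaker one, and why Proposition~\ref{P:KeyLemma}, Lemma~\ref{L:LastLetter}, and Lemma~\ref{L:TypeSigmaPos} must be threaded together at each recursive step.
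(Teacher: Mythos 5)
Your proof takes essentially the same route as the paper: induction on the strand index $\nn$, a separator $\sep\nn{\cdot}$ inserted between the matching top fragments and the divergent remainder, Proposition~\ref{P:Upper} for the left factor, Proposition~\ref{P:KeyLemma} (with the ``replacement leading factor'' furnished by the induction hypothesis) for the right factor, and Lemma~\ref{L:TypeSigmaPos} to propagate the type. The one organizational difference is that you split the argument explicitly into $\mathtt{Short}$ and $\mathtt{Lex}$ cases, whereas the paper unifies them by padding the shorter splitting with trivial entries up to length $\brdii$ and then letting $\tt$ be the highest index with $\br_\tt\SLL\brbr_\tt$, so that both cases fall out of a single factorization $\br\inv\brbr=\brr\cdot\brbrr\cdot\brbr_1$ with $\brr=\br_1\inv\cdots\ff\nn^{\tt-2}(\br_\tto\inv)\sep\nn\ttt$ and $\brbrr=\sep\nn\ttt\inv\ff\nn^\tto(\brbrr_\tt)\ff\nn^\ttt(\brbr_\tto)\cdots\ff\nn(\brbr_2)$; this buys a slightly shorter exposition at the price of burying the boundary cases $\tt\in\{1,2\}$ that you handle explicitly.
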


\begin{proof}
 We use induction on~$\nn$.
For $\nn = 2$, everything is obvious,  as both~$<$ and $\SLL$ coincide with the standard
ordering of natural numbers. 

Assume $\nn \ge 3$, and $\br\SLL\brbr$ where $\br,\brbr$ belong to~$\BKL\nn$ and $\br \not= 1$ holds. Then $\brbr\not= 1$ holds as well.
Let $(\br_\brdi,\Ldots,\br_1)$ and 
$(\brbr_\brdii,\Ldots,\brbr_1)$ be the $\ff\nn$-splittings of $\br$ and $\brbr$.   As
$\br\SLL\brbr$ holds, we have $\brdi\le\brdii$.
Write $\br_\brdii=\Ldots=\br_\brdip=1$.
Let $\tt$ be the maximal integer in $\{1,\Ldots,\brdii\}$ satisfying $\br_\tt\SLL\brbr_\tt$.
By definition of $\SLL$, such a $\tt$ exists.
Write $\brbrr_\tt=\br_\tt\inv\,\brbr_\tt$.
By induction hypothesis, the braid $\brbrr_\tt$ is $\sigg$-positive.
Moreover, if $\tt\ge2$ holds, then the braid $\brbrr_\tt$ is $\sigg$-positive of type $\last{\brbr}_\tt$.

Assume $\tt=1$.
Then the braid $\br\inv\brbr$ is equal to $\brbrr_\tt$.
Hence, it is $\sigg$-positive.
As the $\BKL\nno$-tail of $\brbr_1$ is non-trivial, we have nothing more to prove.

Assume now $\tt\ge2$.
Let $\aa\indii\nn$ be the last letter of $\ff\nn^\tto(\brbr_\tt)\cdot\,\ldots\cdot\ff\nn(\br_2)$.
The sequence $(\br_\tto,\Ldots,\br_1)$ is a $\ff\nn$-splitting of a braid of $\nn$-breadth $\tto$.
Then Proposition~\ref{P:Upper} implies that the braid $\brr$, that is equal to 
\[\br_1\inv\cdot\,\Ldots\cdot\ff\nn^\ttp(\br_\tto\inv)\cdot\sep\nn\ttt,\] is $\sigg$-positive of type $\aa1\nn$.
Let $\brbrr$ be the braid 
\[\sep\nn\ttt\inv\cdot\ff\nn^\tto(\brbrr_\tt)\cdot\ff\nn^\ttt(\brbr_\tto)\cdot\,\Ldots\cdot\ff\nn(\brbr_2).\]
As $\brbrr_\tt$ is $\sigg$-positive of type $\last{\brbr}_\tt$, Proposition~\ref{P:KeyLemma} implies that the braid
$\brbrr$ is $\sigg$-positive of type $\aa\indii\nn$ or trivial (the latter occurs only for $\indii=1)$.
Then, in any case, the braid~$\brr\,\brbrr$ is $\sigg$-positive of type $\aa\indii\nn$.
As, by construction, we have $\br\inv\brbr=\brr\cdot\brbrr\cdot\brbr_1$, the braid $\br\inv\brbr$ is $\sigg$-positive.
Moreover, assume that the $\BKL\nno$-tail of $\brbr$ is trivial.
Then $\brbr_1$ is trivial and the braid $\brbr$ ends with $\aa\indii\nn$.
In this case we have $\br\inv\brbr=\brr\cdot\brbrr$, a $\sigg$-positive braid of type $\aa\indii\nno$.
Therefore $\br\inv\brbr$ is a $\sigg$-positive braid of type $\last{\brbr}$.
\end{proof}

Our proof of Proposition~\ref{P:MainBis} is therefore complete, and so is the proof of
Theorem~\ref{T:coinOrdre}, which is strictly included in the latter.

So, we now have a complete description of the restriction of the Dehornoy ordering
of braids to the Birman--Ko--Lee monoid~$\BKL\nn$. The characterization of Theorem~1
is inductive, connecting the ordering on~$\BKL\nn$ to the ordering on~$\BKL\nno$.
Actually, it is very easy to obtain a non-inductive formulation. Indeed, we can define the
\emph{iterated splitting}~$T(\br)$ of a braid~$\br$ of~$\BKL\nn$ to be the tree obtained
by substituting the $\ff\nno$-splittings of the entries in the $\ff\nn$-splitting, and
iterating with the $\ff\nnt$-splittings, and so on until we reach~$\BKL2$, \ie, the natural
numbers. In this way, we associate with every braid~$\br$ of~$\BKL\nn$ a tree~$T(\br)$
with branches of length~$\nnt$ and natural numbers labeling the
leaves---see~\cite{Dehornoy:AF} for an analogous construction. Then Theorem~1
immediately implies that, for $\br, \brbr$ in~$\BKL\nn$, the relation $\br < \brbr$
holds if and only if the tree~$T(\br)$ is $\ShortLex$-smaller than the tree~$T(\brbr)$.

\begin{rema}
Whether the tools developed in~\cite{F:SR} and in the current may be
adapted to the case of~$\BP\nn$ is an open question. The starting point of our approach is
very similar to that of~\cite{Dehornoy:AF} and~\cite{Burckel:WO}. However, it seems that the
machinery of ladders and dangerous braids involved in the technical results of
Section~3 are really specific to the case of the dual monoids, and heavily depend on the
highly redundant character of the relations connecting the Birman--Ko--Lee generators.

By contrast, a much more promising approach would be to investigate the restriction
of the finite Thurston-type braid orderings of~\cite{ShortWiest} to the
monoids~$\BKL\nn$ along the lines developed in~\cite{Ito}. In particular, it should be
possible to determine the isomorphism type explicitly.
\end{rema}

% \section{Open questions.}
% 
% We conclude with a few open questions directly inspired by the previous results.

% \subsection{The $\ff\nn$-splitting}
% 
% With the $\ff\nn$-splitting, we associated with every braid of~$\BKL\nn$ a unique sequence in~$\BKL\nno$.
% In this paper, we have seen that a $\ff\nn$-splitting satisfies various constraints: the latter are necessary conditions, but, so far, we do
% not know whether they are sufficient.
% 
% \begin{ques}
% Is there a simple characterization of the sequences in~$\BKL\nno$ that are the $\ff\nn$-splitting of a braid in~$\BKL\nn$?
% \end{ques}
% 
% Of course, we can recognize whether a sequence $(\br_\pp,...,\br_1)$ is a splitting by computing its product (with the necessary
% rotations inserted) and determining its splitting, but, here, we ask for a direct criterion, for instance a syntactic one, in the vein of the
% following easy result, that we state without proof:
% 
% \begin{prop}
% A sequence $(\aa12^{e_\pp}, ..., \aa12^{e_1})$ is a $\BKL2$-splitting if and only if the condition $e_\rr \ge 1$ is satisfied for
% $\rr \ge 3$.
% \end{prop} 
%      
% The monoid~$\BKL\nn$ includes the monoid $\BP\nn$ for $\nn\ge3$. 
% Then we can compute the $\ff\nn$-splitting of a braid lying in~$\BP\nn$.
% 
% \begin{ques}
% What are the specific properties of the $\ff\nn$-splitting of a braid belonging to~$\BP\nn$?
% \end{ques}

\bibliographystyle{ams-pln} 
\bibliography{Bibliography}

 \end{document}